\documentclass[12pt]{amsart}
\usepackage{amssymb}%
\usepackage{graphicx}%
\usepackage{amsthm}%
\usepackage{amsmath}%
\usepackage{amsfonts} 
\usepackage{latexsym}%
\usepackage{epsfig}%
\usepackage{epic}%
\usepackage{eufrak}%
\usepackage{amscd}%
\usepackage{color}%
\usepackage[latin1]{inputenc}
\usepackage[T1]{fontenc}
\usepackage[linktoc=all, pagebackref, hyperindex]{hyperref}%
\hypersetup{colorlinks,  citecolor=blue,  filecolor=blue,  linkcolor=red,  urlcolor=blue}
\voffset=-20mm \oddsidemargin=17pt \evensidemargin=17pt
\headheight=11pt \textheight=665pt \textwidth=442pt \topmargin=26pt
\theoremstyle{plain}
  \newtheorem{thm}{Theorem}[section]
  \newtheorem{prop}[thm]{Proposition}
  \newtheorem{lem}[thm]{Lemma}
  \newtheorem{cor}[thm]{Corollary}
  
\theoremstyle{definition}
  \newtheorem{dfn}[thm]{Definition}
  \newtheorem{ex}[thm]{Example}

\theoremstyle{remark}
  \newtheorem{rem}[thm]{Remark}
%
    {\begin{list}
        {\noindent\makebox[0mm][r]{\rm(\arabic{enumi})}}
        {\leftmargin=5.5ex \usecounter{enumi}}
    }
    {\end{list}}


\newcommand{\Max}{\operatorname{Max}\,}

\newcommand{\supp}{\operatorname{supp}\,}

\newcommand{\gr}{\operatorname{gr}}
\newcommand{\depth}{\operatorname{depth}\,}

\newcommand{\q}{\operatorname{q}\,}

\newcommand{\Min}{\operatorname{Min}\,}

\newcommand{\reg}{\operatorname{reg}}

\newcommand{\AP}{\operatorname{AP}}

\newcommand{\ord}{\operatorname{ord}}

\newcommand{\ld}{\operatorname{ld}}
\newcommand{\lm}{\operatorname{lm}}


\def\NN{{\mathbb N}}

\def\AA{{\mathbb A}}

\def\kk{\Bbbk}
\def\a{{\mathbf a}}
\def\b{{\mathbf b}}
\def\cc{{\mathbf c}}
\def\d{{\mathbf d}}

\def\p{{\mathbf p}}
\def\q{{\mathbf q}}
\def\z{{\mathbf z}}
\def\n{{\mathbf n}}

\def\s{{\mathbf s}}

\def\fm{{\mathfrak m}}
\def\fn{{\mathfrak n}}

\newcommand{\F}{\mathcal{F}}


\def\<{{\langle}}
\def\>{{\rangle}}

\newcommand{\excise}[1]{}
\begin{document}

\bibliographystyle{amsplain}
\title{ Homogeneous numerical semigroups  }

\author[Raheleh Jafari]{Raheleh Jafari$^1$}

\address{$^1$ Mosaheb Institute of Mathematics, Kharazmi Universtity, 50 Taleghani Avenue, 15618-36314  Tehran, Iran. }

\address{$^1$  School of Mathematics, Institute for
	Research in Fundamental Sciences (IPM) P. O. Box: 19395-5746,
	Tehran, Iran.}

\author[S. Zarzuela Armengou]{Santiago Zarzuela Armengou$^2$}

\address{$^2$ Departament de Matem\`{a}tiques i Inform\`{a}tica, Universitat de Barcelona, Gran Via 585, 08007 Barcelona, Spain.}

\address{$^1$Email: {\tt rjafari@ipm.ir}}

\address{$^2$Email: {\tt szarzuela@ub.edu}}

\thanks{Raheleh Jafari was supported in part by a grant from IPM (No. 94130129)}

\thanks{Santiago Zarzuela Armengou was supported by MTM2016-7881-P and 2017SGR-585}

\subjclass[2010]{13D02, 13A30, 13P10}

\keywords{Numerical semigroup rings, Tangent cones, Betti numbers.}

\maketitle

\begin{abstract}
We introduce the concept of homogeneous numerical semigroups and  show that all homogeneous numerical semigroups with Cohen-Macaulay tangent cones are of homogeneous type. In embedding dimension three, we classify all numerical semigroups of homogeneous type into numerical semigroups with complete intersection tangent cones and the homogeneous ones which are not symmetric with Cohen-Macaulay tangent cones.
We also study the behavior of the homogeneous property by gluing and shiftings  to construct  large families of homogeneous numerical semigroups with Cohen-Macaulay tangent cones. In particular we show  that these properties  fulfill asymptotically in the shifting classes.
Several explicit examples are provided  along the  paper to illustrate the property.

\end{abstract}


\section{introduction}

Let $S$ be a numerical semigroup minimally generated by a sequence of positive integers $\n: 0 < n_1 < \cdots < n_d$. For any nonnegative integer $j$ one may consider the shifted sequence $\n + j: 0 < n_1+j < \dots < n_d+j$. Let $\kk$ be a field and $\kk[S]:=\kk[t^{n_1}, \dots , t^{n_d}]\subset \kk[t]$, where $t$ is a free variable, be the numerical semigroup ring defined by $S$. This is the homogeneous coordinate ring of the affine monomial curve in $\mathbb{A}_{\kk}^d$ defined parametrically by $x_1=t^{n_1}, \dots , x_d=t^{n_d}$. Denote by $I(\n)\subset \kk[x_1, \dots , x_d]$ the defining ideal of $\kk[S]$ obtained from the natural presentation $\kk[x_1, \dots ,x_d] \rightarrow \kk[S] \rightarrow 0$.
J. Herzog and H. Srinivasan conjectured that for $j\gg 0$ the Betti numbers of the ideals $I(\n +j)$ become periodic on $j$ with period $n_d-n_1$. In 2013, the conjecture was proven to be true by A.~V.~Jayanthan and H.~Srinivasan for $d=3$ \cite{JS}, by A.~Marzullo for some particular cases if $d=4$ \cite{Ma}, and by P.~Gimenez, I.~Sengupta and H.~Srinivasan in the case of arithmetic sequences \cite{GSS}. Finally, in 2014, T.~Vu gave a completely general positive answer in \cite{V}. One of the main ingredients of Vu's proof is that there exists a positive integer $N$ such that, for all $j>N$, any minimal binomial non-homogeneous generator of $I(\n +j)$ is of the form $x_1^{a_1}u - vx_d^{a_d}$, where $a_1, a_d$ are positive integers, $u, v$ are monomials in the variables $x_2, \dots , x_{d-1}$, and $\deg x_1^{a_1}u > \deg vx_d^{a_d}$. It is noteworthy that the bound $N$ depends on the Castelnuovo-Mumford regularity of $J(\n)$, the ideal generated by the homogeneous elements in $I(\n)$.
Let $I(\n)_{*}$ be the homogeneous ideal generated by the initial forms of the elements in $I(\n)$. Then, $\kk[x_1, \dots , x_d]/I(\n)_{*} \simeq G(S)$, the tangent cone of $\kk[S]$. By using this main ingredient in Vu's proof of the conjecture, J. Herzog and  D. I. Stamate proved in \cite{HS} that for any $j>N$, the Betti numbers of the ideals $I(\n +j)$ and $I(\n +j)_{*}$ coincide. Following the general definition given by J.~Herzog, M.~Rossi, and G.~Valla in \cite{HRV}, we say that $S$ if of homogeneous type if the above condition on the Betti numbers is satisfied. So the result of Herzog-Stamate may be rephrased by saying that $S+j$ is of homogeneous type for any $j>N$. Note that if a numerical semigroup $S$ is of homogeneous type then, the tangent cone $G(S)$ is Cohen-Macaulay.

\medskip

In this paper we introduce a new condition on $S$, to be homogeneous (cf. Definition~\ref{D}), that jointly with the Cohen-Macaulay property of $G(S)$ turns out to be equivalent to a property much similar to the one cited above as the main ingredient of Vu's proof of the Herzog-Srinivasan conjecture (cf. Theorem~\ref{**}). In fact, this property is given in terms of the Ap\'ery set of $S$ and so it can be checked in terms of the generating sequence of integers $\n$. We then show that if $S$ is homogeneous and the tangent cone $G(S)$ is Cohen-Macaulay, then $S$ is of homogeneous type (cf. Theorem~\ref{homogeneous type}). Taking into account that the Cohen-Macaulay property of $G(S)$ can also be checked in terms of the Ap\'ery set of $S$, we get a method to prove that a numerical semigroup $S$ is of homogeneous type that only depends on the Ap\'ery set of $S$ and, ultimately, on elementary computations on the sequence $\n$. In addition, we prove that there exists a positive integer $L$ such that for any $j>L$, all the numerical semigroups generated by sequences of the form $\n +j$ are homogeneous and have Cohen-Macaulay tangent cone (cf. Corollary~\ref{sh-h}), so in particular they are of homogeneous type. The novelty here is that the constant $L$ only depends on the sequence of integers $\n$ and can be easily computed. In fact, it can be computed in terms of what we call the shifting type of a numerical semigroup (cf. Definition~\ref{sh-t}): two numerical semigroups can be obtained one from another as a shifting if and only if they have the same shifting type. So our results say that in the class of numerical semigroups with the same shifting type, all numerical semigroups except a finite number, that only depends on its shifting type, are of homogeneous type.

\medskip

Homogeneous numerical semigroups recover those with a unique maximal expression introduced by J.C. Rosales in \cite{R1}. As a typical example of homogeneous numerical semigroups we have (among several others) those generated by generalized arithmetic sequences (cf. Example~\ref{arithmetic}). In this case, the tangent cone is also Cohen-Macaulay and so they are of homogeneous type. For some special cases of  the class of numerical semigroups generated by a generalized arithmetic sequence (namely, for $d\leq 4$ or $n_1 \leq 2d$), this property was proven by L.~Sharifan and R.~Zaare-Nahandi by completely different methods in \cite{SZ}. On the other hand, numerical semigroups of homogeneous type are not necessarily homogeneous. This is the case for some complete intersection numerical semigroups (cf.  Remark~\ref{koszul} and Example~\ref{example}). We then explore the difference between both classes of numerical semigroups, and found that in embedding dimension $3$ any numerical semigroup which is of homogeneous type has a complete intersection tangent cone or it is homogeneous with Cohen-Macaulay tangent cone (cf. Theorem~\ref{dim3}). In embedding dimension $4$ we give several examples illustrating this difference. In many cases we have checked we get the same conclusion as in embedding dimension $3$, so we could ask ourselves if the same is true for larger embedding dimensions, that is, if any numerical semigroup of embedding dimension $d\geq 4$ which is of homogeneous type, has a complete intersection tangent cone or it is homogeneous with Cohen-Macaulay tangent cone. But in fact this is not true as we show with a concrete example of embedding dimension $4$. By using gluing techniques we also show that for any embedding dimension $d$, there are infinitely many complete intersection numerical semigroups which are of homogeneous type but not homogeneous (cf. Corollary~\ref{d1}).

\medskip
Now, we briefly describe the content of the paper. All the necessary notation and machinery on numerical semigroups is introduced and fixed in Section 2. In Section 3 we prove our main results on homogeneous numerical semigroups, characterize them, and relate with the property of being of homogeneous type. In Section 4 we study in detail the case of embedding dimension 3 and provide different families of examples with embedding dimension 4. Then, in Section 5 we study the behavior of the homogeneous property by gluing, particularly for the case of extensions. Finally, in Section 6 we study shiftings and prove that the property of being homogeneous and having Cohen-Macaulay tangent cone fulfills asymptotically in the shifting classes. Many of the explicit examples along this paper have been computed by using the NumericalSgps package of GAP \cite{DGM}

\medskip

Part of this work has been developed during two stays that the first author has done in the Institute of Mathematics of the University of Barcelona (IMUB) in 2014 and 2016. Both authors would like to thank the IMUB for its hospitality and support. We also thank Anargyros Katsabekis for several discussions on the subject in the case of embedding dimension four, Dumitru Stamate for telling us about his results in \cite{St}, and Francesco Strazzanti for providing the example of a numerical semigroup of embedding dimension four which is of homogeneous type but neither  homogeneous nor  with a complete intersection tangent cone. Finally we would like to thank the anonymous referee for a careful reading of the manuscript and several useful comments and corrections to the paper.
\medskip

\section{preliminaries}
Let $\n:n_1,\ldots,n_d$ be a sequence of integers with $n_1<n_i$ for all $i=2,\ldots,d$, and  $S=\langle n_1,\ldots,n_d\rangle$ be the subsemigoup of $(\NN,+)$ generated by $\n$. We call $S$ a \emph{numerical semigroup} when $\gcd(n_1,\ldots,n_d)=1$ or equivalently $\NN\setminus S$ is a finite set (cf. \cite{GR}). Let $\kk$ be a field. Then, the sequence $\n$ gives rise to a \emph{monomial curve} $C:=C(\n)\subseteq \mathbb{A}_{\kk}^d$ whose parametrization is given by $x_1=t^{n_1},\ldots,x_d=t^{n_d}$. Let $\kk[S]:=\kk[t^{n_1},\ldots,t^{n_d}]\subset \kk[t]$ be the \emph{semigroup ring} generated by $S$ and set $P:=\kk[x_1,\ldots,x_d]$ the polynomial ring over $\kk$. Let $I(\n):=\ker(\varphi)$, where $\varphi:P\longrightarrow \kk[t]$ is the canonical homomorphism  defined by $\varphi(x_i)=t^{n_i}$. We have that $P/I(\n)\simeq \kk[S]$ and in the case that $S$ is a numerical semigroup, or $\kk$ is algebraically closed, then $\kk[S]$ is the coordinate ring of $C(\n)$ and so $I(\n)$ is in fact the defining ideal of $C(\n)$ (cf. \cite{RVZ}). Moreover, if $\n$ is a  minimal system of generators of $S$, the ideal $I(\n)$ only depends on $S$ and we set $I_S:=I(\n)$.
 Let $g:=\gcd(n_1,\ldots,n_d)$. The numerical semigroup assigned to $S$ is  defined as
\[N(S):=\langle n_1/g,\ldots,n_d/g \rangle.\]
Note that for any positive integer $s$, we have $s\in N(S)$ if and only if $gs\in S$.
\begin{rem}\label{iso}
The natural ring homomorphism \[\kk[N(S)]=\kk[t^{n_1/g},\ldots,t^{n_d/g}]\longrightarrow \kk[t^{n_1},\ldots,t^{n_d}]=\kk[S]\] is an isomorphism.
\end{rem}

Throughout this paper we consider the  natural grading on the polynomial ring.  For a vector  $\a=(a_1,\ldots,a_d)$ of non-negative integers, we use $x^\a$ to denote the monomial $x_1^{a_1}\ldots x_d^{a_d}$.  It is known that $I(\n)$ is generated by binomials $x^\a-x^{\b}$ where $\a$ and $\b$ are d-tuples of non-negative integers with  $\varphi(x^\a)=\varphi(x^\b)$. The set $\{x_i \mid a_i+b_i\neq 0\}$  of all variables which appear in $f=x^\a-x^\b$, is called the support of $f$ and is denoted by $\supp(f)$.
 By definition, each element $s\in S$ can be written as  $s=\sum^d_{i=1}a_in_i$ for some non-negative integers $a_i$. The vector $\a$ is called  a {\em{factorization}} of $s$ and  the set of all factorizations of $s$ is denoted by  $\mathcal{F}(s)$, which is obviously a finite set.
Let $|\a|=\sum^d_{i=1}a_i$ denote the total order of $\a$. Then the  maximum integer $n$ which is the total order of a vector  in $\F(s)$ is called the \emph{order} of $s$ and is denoted by
$\ord_S(s)$.  A vector $\a\in\F(s)$ with $|\a|=\ord_S(s)$, is called a \emph{maximal factorization} of $s$ and $s=\sum^d_{i=1}a_in_i$ is called a \emph{maximal expression} of $s$. For a vector $\a$ of non-negative integers, we set $s(\a)=\sum^d_{i=1}a_in_i$.

\begin{rem}
Let $s$ be an element of $S$ and $M=S\setminus\{0\}$ be the maximal ideal of $S$. Then the order of $s$ is  the maximum  integer $n$ such that $nM$
contains  $s$.  In other words, $s\in nM\setminus (n+1)M$ if and only
if $n=\ord_S(s)$.
\end{rem}

We use two partial orderings $\preceq$ and $\preceq_M$ on $S$ where, for all elements $x$ and $y$ in $S$, $x\preceq y$ if there is an element $z\in S$ such that $y=x+z$ and $x\preceq_My$ if $y=x+z$ with $\ord_S(y)=\ord_S(x)+\ord_S(z)$ for some $z\in S$. For a finite subset $T\subset S$, considering these orderings, the maximal elements of
$T$ are denoted respectively by $\Max T$ and
$\Max_M T$ and the minimal elements of $T$ are respectively denoted by $\Min T$ and $\Min_MT$. It is clear that $\Max T \subseteq \Max_MT$ and $\Min T\subseteq\Min_MT$.
The following easy fact will be used frequently in our approach.
\begin{lem} Let $s$ and $y$ be elements in $S$.
Then
\begin{enumerate}
\item  $s\preceq y$ if and only if there exist factorizations  $\a\in\F(s)$ and $\b\in\F(y)$ such that the monomial $x^\b$ is divisible by  $x^\a$.
\item $s\preceq_My$ if and only if there exist maximal factorizations $\a\in\F(s)$ and $\b\in\F(y)$ such that  the monomial $x^\b$ is divisible by  $x^\a$.
\end{enumerate}
\end{lem}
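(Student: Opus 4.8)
The plan is to prove both equivalences from the single observation that factorizations are compatible with the addition in $S$: adding exponent vectors componentwise corresponds to adding elements of $S$, and the monomial $x^\a$ divides $x^\b$ precisely when $a_i\le b_i$ for all $i$. Part (1) is then immediate in both directions. For ($\Rightarrow$), given $z\in S$ with $y=s+z$, I would pick any $\a\in\F(s)$ and any $\cc\in\F(z)$ and set $\b:=\a+\cc$; then $s(\b)=s(\a)+s(\cc)=y$, so $\b\in\F(y)$, and $a_i\le b_i$ for all $i$, whence $x^\a\mid x^\b$. Conversely, if $\a\in\F(s)$ and $\b\in\F(y)$ satisfy $x^\a\mid x^\b$, then $\cc:=\b-\a$ has non-negative entries, so $z:=s(\cc)\in S$ and $y=s(\b)=s(\a)+s(\cc)=s+z$, giving $s\preceq y$.

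For part (2) I would run the same construction while tracking total orders. In the direction ($\Rightarrow$), starting from $z\in S$ with $y=s+z$ and $\ord_S(y)=\ord_S(s)+\ord_S(z)$, I choose \emph{maximal} factorizations $\a\in\F(s)$ and $\cc\in\F(z)$, so that $|\a|=\ord_S(s)$ and $|\cc|=\ord_S(z)$. Then $\b:=\a+\cc\in\F(y)$ has $|\b|=|\a|+|\cc|=\ord_S(s)+\ord_S(z)=\ord_S(y)$, so $\b$ is a maximal factorization of $y$ with $x^\a\mid x^\b$, as required.

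The only step needing an extra idea is the converse ($\Leftarrow$) of (2), and this is where I expect the (minor) obstacle. Suppose $\a\in\F(s)$ and $\b\in\F(y)$ are maximal factorizations with $x^\a\mid x^\b$; set $\cc:=\b-\a\ge 0$ and $z:=s(\cc)\in S$, so $y=s+z$. Maximality of $\a,\b$ gives $\ord_S(y)=|\b|=|\a|+|\cc|=\ord_S(s)+|\cc|$. To obtain $s\preceq_M y$ I must upgrade $|\cc|$ to $\ord_S(z)$, and for this I would invoke two facts: first, $|\cc|\le\ord_S(z)$ directly from the definition of order as the maximal total order of a factorization; second, the superadditivity $\ord_S(s+z)\ge\ord_S(s)+\ord_S(z)$, which follows by concatenating maximal factorizations of $s$ and $z$ (equivalently, from the description of $\ord_S$ via the powers $nM$ in the preceding remark). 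Combining these, $\ord_S(s)+|\cc|=\ord_S(y)\ge\ord_S(s)+\ord_S(z)\ge\ord_S(s)+|\cc|$, which forces equality throughout; in particular $|\cc|=\ord_S(z)$ and $\ord_S(y)=\ord_S(s)+\ord_S(z)$, that is, $s\preceq_M y$.
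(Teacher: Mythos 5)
Your proof is correct and complete: both directions of (1) and the forward direction of (2) follow from componentwise addition of factorizations, and your handling of the converse of (2) --- sandwiching $\ord_S(s)+|\cc|=\ord_S(y)\ge\ord_S(s)+\ord_S(z)\ge\ord_S(s)+|\cc|$ via superadditivity of $\ord_S$ to force $|\cc|=\ord_S(z)$ --- is exactly the one point that needs an argument, and you supply it correctly. The paper states this lemma as an ``easy fact'' with no proof at all, and your argument is precisely the routine verification it leaves implicit.
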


The \emph{leading term} of a non-zero element $f\in P$ is the homogeneous summand of $f$ with least degree, which we  denote  by $f_*$ and set $\ld(f)$ for the degree of $f_*$. For an ideal $I\subset P$  we set $I_*\subset P$ be the graded ideal generated by all $f_*$ with $f\in I$.
A set $\{f_1,\ldots,f_r\}$ of elements of $I$ is called a \emph{standard basis} for $I$ if $I_*$ is generated by $\{{f_1}_*,\ldots,{f_r}_*\}$.

In the sequel, we consider the coordinate ring $G(S)$ of the tangent cone of $R=\kk[S]$, which is precisely the  associated graded ring $\gr_\fm(R)$ of $R$ with respect to the maximal ideal $\fm=(t^{n_1},\ldots,t^{n_d})$. Note that
\begin{equation}
G(S)=\gr_\fm(R)\cong \kk[x_1,\ldots,x_d]/I(\n)_*
\end{equation}

\medskip
Let $A=\kk[\![x_1,\ldots,x_d]\!]$ be the formal power series ring and
$\kk[\![S]\!]=\kk[\![t^{n_1},\ldots,t^{n_d}]\!]$ be the local ring associated to $S$. In other words $\kk[\![S]\!]=\kk[\![x_1,\ldots,x_d]\!]/J(\n)$,  where
$J(\n)$ is the kernel of the canonical
homomorphism $\psi: A\longrightarrow \kk[\![t]\!]$ defined by $\psi(x_i)=t^{n_i}$. Note that  the one dimensional integral domain $k[\![S]\!]$ is  indeed the $\fm$-adic completion of $\kk[S]$.
The smallest element of $N(S)$, $e:=n_1/g$, is equal to the multiplicity of $\kk[\![N(S)]\!]\cong \kk[\![S]\!]$. Indeed $t^{e}$ generates a minimal reduction of the maximal ideal of $\kk[\![N(S)]\!]$. The element $e$ is called the \emph{ multiplicity} of $S$ and is denoted by $m(S)$.

\medskip

For an element $f\in \kk[S]$ with $\fm$-adic order $d$, the residue class of $f$ in $\fm^d/\fm^{d+1}$ is called the \emph{initial form} of $f$ and is denoted by $f^*$.
Note that   $(t^s)^*(t^{s'})^*=0$ for two elements $s,s'\in S$, if
and only if $\ord_S(s+s')>\ord_S(s)+\ord_S(s')$. Hence if we set
\[T(S) :=\{s\in S;\, \exists \,c >0 \textrm{ with }  \ord(s+ c n_1)> \ord(s) +c  \},
\]
then $T(S)$ is precisely the set of elements of $s\in S$ such that $(t^s)^*$ annihilates some power of $t^{n_1}$. These elements are  called \emph{torsion} elements of $S$.  In the case that $S$ is a numerical semigroup, it is known that $G(S)$ is Cohen-Macaulay if and only if $T(S)=\emptyset$, which is  proved for the first time by A. Garc\'ia in
\cite[Theorem 7, Remark 8]{G}, see also \cite[Remark 2.11]{CJZ} for
a different proof. Now, by the isomorphism in Remark~\ref{iso}, we have the following statement.

\begin{prop}\label{CM}
	$G(S)$ is Cohen-Macaulay if and only if $T(S)=\emptyset$.
\end{prop}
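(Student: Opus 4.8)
The plan is to reduce the statement to the case of a numerical semigroup, where it is already known by Garc\'ia's theorem quoted just above, using the scaling isomorphism of Remark~\ref{iso}. Recall that $g=\gcd(n_1,\ldots,n_d)$, that $N(S)$ is a numerical semigroup (its generators have gcd $1$), and that Garc\'ia's result applies to $N(S)$: $G(N(S))$ is Cohen-Macaulay if and only if $T(N(S))=\emptyset$. The task is therefore to see that both the Cohen-Macaulay property of the tangent cone and the torsion set are unchanged when passing from $S$ to $N(S)$.

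First I would match the tangent cones. The isomorphism $\kk[N(S)]\cong\kk[S]$ of Remark~\ref{iso} sends the generator $t^{n_i/g}$ to $t^{n_i}$, hence carries the maximal ideal $(t^{n_1/g},\ldots,t^{n_d/g})$ of $\kk[N(S)]$ onto $\fm=(t^{n_1},\ldots,t^{n_d})$, and thus induces an isomorphism of associated graded rings $G(N(S))\cong G(S)$. Equivalently, one observes directly that $S$ and $N(S)$ have the same defining ideal $I(\n)\subset P$, since the binomial relations $x^\a-x^\b$ in the kernel are exactly those with $\sum a_in_i=\sum b_in_i$, which is the same condition as $\sum a_i(n_i/g)=\sum b_i(n_i/g)$; consequently $I(\n)_*$ is literally the same graded ideal, and $G(S)\cong P/I(\n)_*\cong G(N(S))$. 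In particular $G(S)$ is Cohen-Macaulay if and only if $G(N(S))$ is.

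Next I would transfer the torsion sets via the semigroup isomorphism $s\mapsto s/g$ from $S$ onto $N(S)$ (well defined since $g$ divides every element of $S$, and carrying $n_i$ to $n_i/g$). A vector $\a$ is a factorization of $s$ in $S$ precisely when it is a factorization of $s/g$ in $N(S)$, with the same total order $|\a|$; hence $\F_S(s)=\F_{N(S)}(s/g)$ and $\ord_S(s)=\ord_{N(S)}(s/g)$. Substituting these equalities into the defining inequality of $T$, the condition $\ord_S(s+cn_1)>\ord_S(s)+c$ becomes $\ord_{N(S)}(s/g+c(n_1/g))>\ord_{N(S)}(s/g)+c$, so $s\in T(S)$ if and only if $s/g\in T(N(S))$. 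Therefore $T(S)=\emptyset$ if and only if $T(N(S))=\emptyset$.

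Finally I would chain the equivalences: $G(S)$ is Cohen-Macaulay $\iff$ $G(N(S))$ is Cohen-Macaulay $\iff$ (Garc\'ia's theorem for the numerical semigroup $N(S)$) $T(N(S))=\emptyset$ $\iff$ $T(S)=\emptyset$, which is the assertion. There is essentially no hard step here, since everything rests on the already-established numerical case together with the bookkeeping of the scaling map; the only point demanding care is checking that both $G(-)$ and $\ord(-)$ genuinely depend only on the defining ideal $I(\n)$ and so are invariant under $S\leadsto N(S)$, which the explicit verifications above make precise.
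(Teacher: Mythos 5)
Your proof is correct and follows essentially the same route as the paper: the paper deduces the proposition from Garc\'ia's theorem for numerical semigroups together with the isomorphism $\kk[N(S)]\cong\kk[S]$ of Remark~\ref{iso}, which is exactly the reduction you carry out. The only difference is that you make explicit the bookkeeping (invariance of the tangent cone, of factorizations and orders, and hence of $T(-)$ under the scaling $s\mapsto s/g$) that the paper leaves implicit.
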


Let $<$ be a monomial order on  $P$ and let $f=\sum_{i=1}^n r_ix^{\a_i}$ be a non-zero polynomial with $r_i\in \kk$.  The \emph{leading monomial} of $f$ with respect to $<$, denoted by $\lm_<(f)$, is the biggest monomial with respect to $<$ among the monomials $\{x^{\a_1},\ldots,x^{\a_n}\}$.
A  set of polynomials
$G=\{f_1,\ldots,f_n\}$ of an ideal $I$ is called a \emph{Gr\"obner basis} of $I$ with respect to $<$, if $\{\lm_<(f_1),\ldots,\lm_<(f_n)\}$ is the set of generators for the monomial ideal $\lm_<(I)=(\lm_<(f) \mid f\in I)$. Since the monomial ideal $\lm_<(I)$ has a unique minimal set of monomial generators, the set of leading monomials of elements of any minimal Gr\"obner basis for  $I$, is a unique set.
A Gr\"obner basis $G$ is called \emph{reduced} if the coefficient of $\lm_<(f_i)$ in $f_i$ is one for all $1\leq i\leq n$ and for $i\neq j$, none of the monomials of $\supp(f_j)$ is divisible by $\lm_<(f_i)$. Any Gr\"obner basis of $I$ is a generating set for $I$, a reduced Gr\"obner basis exists and it is uniquely determined (cf. \cite[Theorem~2.2.7]{HH}).

\begin{rem}\label{Gb}
	Let $G=\{f_1,\ldots,f_r\}$ be the reduced Gr\"obner basis of $I$. For each $f\in I$,  $\lm_<(f)$ is divisible by $\lm_<(f_i)$ for some $i=1,\ldots r$.  Therefore $\lm_<(f_i)$ does not divide $\lm_<(f_j)$ for all $j\neq i$.
\end{rem}

\begin{rem}
	Since $I(\n)$ is a binomial ideal, the  reduced Gr\"obner basis of $I$ consists of binomials by \cite[Proposition~2.3.7]{HH}.	
\end{rem}

 We  consider  the negative degree reverse lexicographical ordering with $x_2>\cdots>x_d>x_1$. We denote this local term order by $<_{ds}$, i.e.
   $x^\b<_{ds}x^\a$ precisely when one of the following statements holds:
\begin{itemize}\item $|\b|>|\a|$; or
	\item $|\b|=|\a|$ , $b_1>a_1$; or
\item $|\b|=|\a|$ , $b_1=a_1, b_d=a_d,\ldots,b_{i+1}=a_{i+1}, b_i>a_i$  for some $2\leq i\leq d$.\end{itemize}

\medskip
\begin{rem}\label{hds}
	Let $f=\sum^n_{i=1}r_ix^{\a_i}$ be a homogeneous polynomial, where $r_i\in \kk$. Let $x^{\a_1}=\lm_{<_{ds}}(f)$ and  $x_1\in\supp(x^{\a_1})$. Since $x^{\a_i}<_{ds}x^{\a_1}$ and $|\a_i|=|\a_1|$, we have  $x_1\in\supp(x^{\a_i})$ for all $i=2,\ldots,d$. In particular $x_1$ divides $f$.
\end{rem}

\begin{prop}[\text{cf. \cite[Lemma 2.7]{AMS}}]\label{ds}
Let $S$ be a numerical semigroup. Let $G=\{f_1,\ldots,f_s\}$ be a minimal Gr\"obner basis of $I(\n)$
with respect to $<_{ds}$. Then
$G(S)$ is Cohen-Macaulay if and only if $x_1$ does not divide $\lm_{<_{ds}}(f_i)$ for $1\leq i\leq d$.
\end{prop}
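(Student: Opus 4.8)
The plan is to translate the Cohen--Macaulay property of $G(S)=P/I(\n)_*$ into a nonzerodivisor condition and then read that condition off from the leading monomials. Since $\dim G(S)=1$ and the initial form $\bar{x}_1=(t^{n_1})^*$ of the minimal reduction $t^{n_1}$ is a homogeneous parameter, Proposition~\ref{CM} together with the characterization of torsion elements preceding it says that $G(S)$ is Cohen--Macaulay if and only if $T(S)=\emptyset$, which is exactly the assertion that $\bar{x}_1$ is a nonzerodivisor on $P/I(\n)_*$. So it suffices to prove that $\bar{x}_1$ is a nonzerodivisor on $P/I(\n)_*$ if and only if $x_1\nmid\lm_{<_{ds}}(f_i)$ for all $i$, where I take $G$ to be the reduced Gr\"obner basis so that the $\lm_{<_{ds}}(f_i)$ are precisely the minimal generators of $\lm_{<_{ds}}(I(\n))$.

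The first step is to record the compatibility of $<_{ds}$ with initial forms. Because the first rule of $<_{ds}$ makes a monomial of larger total degree smaller, the leading monomial always lies in the lowest-degree homogeneous part, so $\lm_{<_{ds}}(f)=\lm_{<_{ds}}(f_*)$ for every $f\in P$, and $<_{ds}$ restricted to a fixed degree is the reverse lexicographic order with $x_1$ smallest. Consequently a $<_{ds}$-Gr\"obner basis of $I(\n)$ is simultaneously a standard basis: the initial forms $(f_i)_*$ generate $I(\n)_*$ and form a Gr\"obner basis of the \emph{homogeneous} ideal $I(\n)_*$, giving $\lm_{<_{ds}}(I(\n)_*)=\lm_{<_{ds}}(I(\n))=(\lm_{<_{ds}}(f_1),\ldots,\lm_{<_{ds}}(f_s))$.

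For the ``if'' direction I would run a normal-form argument inside $I(\n)_*$. Given a homogeneous $g$ with $x_1 g\in I(\n)_*$, reduce $g$ to its normal form $\bar{g}$ modulo $\{(f_i)_*\}$, so $g-\bar{g}\in I(\n)_*$ and hence $x_1\bar{g}\in I(\n)_*$. If $\bar{g}\neq 0$, then $x_1\lm_{<_{ds}}(\bar{g})\in\lm_{<_{ds}}(I(\n)_*)$ is divisible by some $\lm_{<_{ds}}(f_i)$; as $x_1\nmid\lm_{<_{ds}}(f_i)$, we get $\lm_{<_{ds}}(f_i)\mid\lm_{<_{ds}}(\bar{g})$, contradicting that $\lm_{<_{ds}}(\bar{g})$ is a normal-form monomial. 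Thus $\bar{g}=0$, $g\in I(\n)_*$, and $\bar{x}_1$ is a nonzerodivisor. For the ``only if'' direction, suppose $x_1\mid\lm_{<_{ds}}(f_i)$ for some $i$. Since $I(\n)$ is binomial its reduced Gr\"obner basis consists of binomials, and $\lm_{<_{ds}}(f_i)=\lm_{<_{ds}}((f_i)_*)$ with $(f_i)_*$ homogeneous, so Remark~\ref{hds} forces $x_1$ to divide $(f_i)_*$, say $(f_i)_*=x_1 h$. If $\bar{x}_1$ were a nonzerodivisor, then $x_1 h\in I(\n)_*$ would give $h\in I(\n)_*$, whence $\lm_{<_{ds}}(h)\in\lm_{<_{ds}}(I(\n))$ properly divides $\lm_{<_{ds}}(f_i)=x_1\lm_{<_{ds}}(h)$, contradicting the minimality of $\lm_{<_{ds}}(f_i)$ as a generator. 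Hence $\bar{x}_1$ is a zerodivisor and $G(S)$ is not Cohen--Macaulay.

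The step demanding the most care is the first one: establishing $\lm_{<_{ds}}(f)=\lm_{<_{ds}}(f_*)$ and, through it, that a $<_{ds}$-Gr\"obner basis of $I(\n)$ is a standard basis, so that $\lm_{<_{ds}}(I(\n))=\lm_{<_{ds}}(I(\n)_*)$ and the reduction can be carried out within the homogeneous ideal $I(\n)_*$. Once this dictionary between the local order $<_{ds}$ and the tangent cone is in place, both directions are short normal-form computations. I should also be explicit that $G$ is taken reduced, since the ``only if'' direction uses that each $\lm_{<_{ds}}(f_i)$ is a genuine minimal generator of $\lm_{<_{ds}}(I(\n))$.
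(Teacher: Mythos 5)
Your proof is correct, but there is nothing internal to compare it against: the paper does not prove this proposition at all, it imports it from \cite[Lemma 2.7]{AMS}. Your write-up in effect supplies the missing argument, and it follows the same strategy as the cited source: convert Cohen--Macaulayness of $G(S)=P/I(\n)_*$ into the statement that $x_1$ is a nonzerodivisor modulo $I(\n)_*$ (via Proposition~\ref{CM} and the torsion discussion preceding it), and then detect that condition on leading monomials using the compatibility of $<_{ds}$ with initial forms. Two points deserve comment. First, the fact you flag as ``demanding the most care,'' namely $\lm_{<_{ds}}(I(\n)_*)=\lm_{<_{ds}}(I(\n))$, is indeed the crux; it is standard in the theory of standard bases, but it also has a three-line self-contained proof you could include: any nonzero homogeneous $h\in I(\n)_*$ can be written as $h=\sum_j p_j (h_j)_*$ with $p_j$ homogeneous and $h_j\in I(\n)$, and then $f:=\sum_j p_j h_j\in I(\n)$ has $f_*=h$, so $\lm_{<_{ds}}(h)=\lm_{<_{ds}}(f)\in \lm_{<_{ds}}(I(\n))$; the reverse inclusion is immediate from $\lm_{<_{ds}}((f_i)_*)=\lm_{<_{ds}}(f_i)$. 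Note that this equality of leading-term ideals is all your two normal-form arguments use; the stronger assertion that the $(f_i)_*$ generate $I(\n)_*$ is not needed. Second, your insistence on taking $G$ reduced is not pedantry but a genuine repair of the statement: for an arbitrary Gr\"obner basis (in the sense that the leading monomials generate $\lm_{<_{ds}}(I(\n))$) the ``only if'' direction is literally false, since one may append $x_1f_1$ to a reduced Gr\"obner basis of a Cohen--Macaulay example and still have a Gr\"obner basis, now with a leading monomial divisible by $x_1$. The correct invariant statement is that $G(S)$ is Cohen--Macaulay if and only if $x_1$ divides no \emph{minimal} generator of $\lm_{<_{ds}}(I(\n))$, which is exactly what your reduced-basis argument proves, and which is consistent with Remark~\ref{Gb} and Corollary~\ref{Codc}; the sufficiency direction, which is the only one used later in Theorem~\ref{**}, does hold for an arbitrary Gr\"obner basis, as your argument shows. (Incidentally, the range ``$1\leq i\leq d$'' in the statement should read ``$1\leq i\leq s$.'')
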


\begin{cor}\label{Codc}
Let $G$ be a minimal Gr\"obner basis of $I(\n)$ with respect to $<_{ds}$. Then $x_1$ does not divide $\lm_{<_{ds}}(f)$ for all $f\in G$, if and only if $x_1$ does not divide the leading monomials of elements of any minimal  Gr\"obner basis of $I(\n)$ with respect to $<_{ds}$.
\end{cor}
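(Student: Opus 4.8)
The statement to prove (Corollary~\ref{Codc}) is an equivalence about when $x_1$ divides leading monomials across all minimal Gröbner bases with respect to $<_{ds}$. The plan is to reduce everything to the \emph{reduced} Gröbner basis, which is unique by the result quoted from \cite[Theorem~2.2.7]{HH}, and then exploit the fact that the set of leading monomials of any minimal Gröbner basis is the unique minimal generating set of the monomial ideal $\lm_{<_{ds}}(I(\n))$ (as already noted in the text preceding Remark~\ref{Gb}). The key observation is that the property ``$x_1$ divides $\lm_{<_{ds}}(f)$ for all $f$ in the basis'' is really a property of the \emph{set} of leading monomials, and for a minimal Gröbner basis this set coincides (up to irrelevant scalar/monomial bookkeeping) with the canonical minimal monomial generators of $\lm_{<_{ds}}(I(\n))$.

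First I would fix notation: let $\mu_1,\dots,\mu_r$ be the unique minimal monomial generators of $\lm_{<_{ds}}(I(\n))$. By Remark~\ref{Gb} and the uniqueness statement for the minimal generating set of a monomial ideal, every minimal Gröbner basis $G$ of $I(\n)$ with respect to $<_{ds}$ has exactly the leading monomial set $\{\lm_{<_{ds}}(g) : g\in G\} = \{\mu_1,\dots,\mu_r\}$. In particular this set is \emph{independent} of the choice of minimal Gröbner basis. Consequently the condition ``$x_1 \nmid \lm_{<_{ds}}(f)$ for all $f\in G$'' is equivalent to ``$x_1 \nmid \mu_i$ for all $i=1,\dots,r$'', which makes no reference to $G$ at all.

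The forward direction of the corollary is now essentially immediate: if $G$ is \emph{some} Gröbner basis (not necessarily minimal) with $x_1 \nmid \lm_{<_{ds}}(f)$ for every $f\in G$, then in particular the minimal generators $\mu_i$, being chosen from among the $\lm_{<_{ds}}(f)$ (each $\mu_i$ divides, hence by minimality equals up to the divisibility relation, some $\lm_{<_{ds}}(f)$), are also not divisible by $x_1$; thus $x_1\nmid\mu_i$ for all $i$, and by the previous paragraph $x_1$ divides no leading monomial in any minimal Gröbner basis. For the converse, if $x_1$ divides none of the leading monomials of some minimal Gröbner basis, then $x_1\nmid\mu_i$ for all $i$ by uniqueness; but then \emph{any} Gröbner basis $G$ has every $\lm_{<_{ds}}(f)$ divisible by some $\mu_i$ (by the definition of Gröbner basis, as in Remark~\ref{Gb}), and since none of the $\mu_i$ involves $x_1$, none of their multiples $\lm_{<_{ds}}(f)$ can involve $x_1$ either, giving the claim.

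The step I expect to require the most care is the precise matching between the leading-monomial set of an arbitrary minimal Gröbner basis and the canonical generators $\mu_i$: one must argue that in a \emph{minimal} Gröbner basis no leading monomial properly divides another (this is exactly the content of Remark~\ref{Gb}), so that the $\lm_{<_{ds}}(g)$ are precisely the minimal generators $\mu_i$ and not redundant multiples. Once that identification is in place the divisibility bookkeeping by $x_1$ is routine, since divisibility of a multiple by $x_1$ forces divisibility of the generating monomial by $x_1$ — the variable $x_1$ cannot be introduced by multiplication from a monomial that does not already contain it. This is where I would be most careful to phrase the argument symmetrically so that both implications of the equivalence fall out cleanly.
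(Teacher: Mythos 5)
Your reduction to the unique minimal monomial generators $\mu_1,\dots,\mu_r$ of $\lm_{<_{ds}}(I(\n))$ is the right starting point, and your forward implication is correct: the leading monomials of any Gr\"obner basis $G$ generate $\lm_{<_{ds}}(I(\n))$, so each $\mu_i$ actually occurs among the $\lm_{<_{ds}}(f)$, $f\in G$ (some $\lm_{<_{ds}}(f)$ divides $\mu_i$, some $\mu_j$ divides that $\lm_{<_{ds}}(f)$, and minimality forces both to equal $\mu_i$); hence $x_1\nmid\lm_{<_{ds}}(f)$ for all $f\in G$ gives $x_1\nmid\mu_i$ for all $i$, and by the uniqueness of the minimal generating set the $\mu_i$ are exactly the leading monomials of every minimal Gr\"obner basis. (Your parenthetical ``each $\mu_i$ divides some $\lm_{<_{ds}}(f)$'' states the divisibility backwards, but the conclusion you extract is the correct one.)

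The backward direction, however, rests on a false inference. From ``$\mu_i$ divides $\lm_{<_{ds}}(f)$ and $x_1\nmid\mu_i$'' you conclude $x_1\nmid\lm_{<_{ds}}(f)$, on the grounds that $x_1$ ``cannot be introduced by multiplication from a monomial that does not already contain it.'' This is exactly the wrong way around: multiplication can introduce $x_1$ (multiply by $x_1$); for instance $x_2$ divides $x_1x_2$ while $x_1\nmid x_2$ and $x_1\mid x_1x_2$. Divisibility only transports the property in the opposite direction: $x_1\mid\mu_i$ and $\mu_i\mid\lm_{<_{ds}}(f)$ imply $x_1\mid\lm_{<_{ds}}(f)$. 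Moreover the step cannot be patched while $G$ is allowed to be an arbitrary Gr\"obner basis: if $G_0$ is a minimal Gr\"obner basis none of whose leading monomials involves $x_1$, then $G_0\cup\{x_1f\}$ for $f\in G_0$ is still a Gr\"obner basis (since $\lm_{<_{ds}}(x_1f)=x_1\lm_{<_{ds}}(f)$, its leading monomials still generate $\lm_{<_{ds}}(I(\n))$), the right-hand condition of the corollary holds for it, yet the left-hand one fails. So the purely combinatorial equivalence you are trying to prove is false for non-minimal $G$; the implication from the condition on minimal bases back to $G$ either requires $G$ itself to be minimal --- in which case it is immediate from your own first step, since all minimal Gr\"obner bases share the same set of leading monomials --- or it must be routed, as the paper implicitly does by stating this as a corollary of Proposition~\ref{ds}, through the Cohen--Macaulay property of $G(S)$: the vanishing condition on $G$ holds iff $G(S)$ is Cohen--Macaulay iff it holds for every minimal Gr\"obner basis. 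That ring-theoretic detour (or the restriction on $G$) is precisely what your argument is missing.
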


\section{Semigroups with homogeneous Ap\'ery sets}

 For an element $s\in S$, the Ap\'ery set of $S$ with respect to $s$ is defined as
\[\AP(S,s)=\{x\in S \mid x-s\notin S\}.\]
Let $g:=\gcd(n_1,\ldots,n_d)$ and $N(S)=\langle n_1/g,\ldots,n_d/g \rangle$ be the numerical semigroup assigned to $S$. Then for any positive integer $s$, we have $s\in N(S)$ if and only if $gs\in S$. Hence
\[\AP(S,s)=\{gx \mid x\in\AP(N(S),s/g)\}.\]
The Ap\'ery set $\AP(N(S),t)$, for  $t\in N(S)$,  is indeed the set of the smallest elements in $N(S)$ in each congruence class modulo $t$ and has $t$ elements.

\medskip

 Given $0\neq s\in S$,  the set of lengths of $s$ in $S$ is defined as
 \[\mathcal{L}(s)=\{\sum_{i=1}^dr_i \mid s=\sum^d_{i=1}r_in_i , r_i\geq 0\}.\]

 \begin{dfn}\label{D}
 	A subset $T\subset S$ is called \emph{homogeneous} if either it is empty or $\mathcal{L}(s)$ is singleton for all $0\neq s\in T$. In other words, all expressions of elements in $T$ are maximal.   The numerical semigroup $S$ is called homogeneous, when the Ap\'ery set  $\AP(S,n_1)$ is homogeneous.
 	 \end{dfn}

\begin{ex}
Let $d=2$. Then $\AP(S,n_1)=\{0,n_2,\ldots,(n_1-1)n_2\}$ is clearly homogeneous.
\end{ex}

\begin{ex}\label{maximal embedding}
 A numerical semigroup  is called of (almost) maximal embedding dimension if its multiplicity is equal to the embedding dimension (minus one). As in this case, the Ap\'ery set is precisely the minimal set of generators (and one more element of order two), $S$ is homogeneous.
\end{ex}

\begin{ex}\label{F}
Let $b>a>3$ be coprime integers. The semigroup $H_{a,b}=\<a,b,ab-a-b\>$ is called Frobenius semigroup (cf.\cite{HS}), since it is obtained from the semigroup $\<a,b\>$ by adding its Frobenius number. Note that $\AP(H_{a,b},a)=\{0,b,\ldots,(a-2)b,ab-a-b\}$ and so it is homogeneous.

Note also that $(ab-a-b)+a=(a-1)b$ has order $a-1>2$ and so $G(H_{a,b})$ is not Cohen-Macaulay. Indeed $T(H_{a,b})=\{ab-a-b\}$.
\end{ex}

\begin{ex}\label{generic}
	Recall that $I_S$ is called \emph{generic} if it is generated by binomials with full support i.e. all variables belong to the support of these binomials (cf. \cite{PS}). Assume that $I_S$ is generic and let  $E$ be the set of  minimal set of generators with full support. Let $s\in\AP(S,n_i)$ with two expressions $s=\sum_{j\neq i}a_jn_j=\sum_{j\neq i}b_jn_j$. Then $x^\a-x^\b\in I_S$ should be generated by elements of $E$. But all elements in $E$ have $x_i$ in their support, which is not possible. Hence the elements of $\AP(S,n_i)$ have unique expressions, in particular  $\AP(S,n_i)$ is homogeneous.
\end{ex}

\begin{ex}\label{arithmetic}
 Let $S$ be the numerical semigroup minimally generated by the generalized arithmetic sequence $n_0$, $n_i=hn_0+it$ where $n_0$, $t$ and $h$ are given positive integers and $i=1,\ldots,d$. Since we assume that $S$ is a numerical semigroup, the minimal generators are relatively prime and, then, $\gcd(n_0,t)=1$. We know from \cite{S} (see also \cite{M}) that
\[\AP(S,n_0)=\{hn_0\lceil\frac{r}{d}\rceil+tr \ ;  \ 0\leq r<n_0\}.\]
Let $s\neq 0$ be an element of $\AP(S,n_0)$. Then, any expression of $s$ cannot involve the generator $n_0$, hence if we have an expression of $s$ of length $l\geq 1$ it must be of the form $s=lhn_0+tr$ with $r=\sum_{i=1}^l a_i$, where $1\leq a_i \leq d$ for any $i$. Let $r=qn_0 + r_1$ where $0\leq r_1 < n_0$. Then, $ld \geq r = qn_0 +r_1$ and so $l \geq \frac{qn_0}{d} + \frac{r_1}{d}$. Because $l$ is a positive integer this implies that $l \geq \lceil\frac{r_1}{d}\rceil$.
Let now be any element $a=lhn_0+tr$ of $S$ with $l\geq \lceil\frac{r_1}{d}\rceil$ and let $r=qn_0 + r_1$ where $0\leq r_1 < n_0$.
Then $w:=lhn_0+tr_1 \in S$ and consequently $a=w+ tqn_0$ does not belong to $\AP(S,n_0)$ if $r\geq r_0$.

Let $0 \neq s\in\AP(S,n_0)$ with two expressions of length $l$ and $l'$. Then
\[s=lhn_0+tr=l'hn_0+tr'.\]
Since $\gcd(n_0,t)=1$, we have $lh=l'h+\alpha t$ and $r'=r+\alpha n_0$ for some integer $\alpha$. On the other hand  $r,r'<n_0$, since $s$ belongs to  $\AP(S,n_0)$. Hence  $\alpha=0$ and consequently $l=l'$.
Therefore  $\AP(S,n_0)$ is a homogeneous set. Implicitly we have $\ord_S(hn_0\lceil\frac{r}{d}\rceil+tr)=\lceil\frac{r}{d}\rceil$, for all  $0\leq r<n_0$.
\end{ex}

\begin{lem}\label{key}
	Let $\cc$ be a factorization of an element $s$ of  $S$. Then there exists a minimal set of generators $E$ of $I_S$ such that   each binomial $x^\a-x^\b\in E$ with $s(\a)\notin\AP(S,s)$,  	
	has one term divisible by $x^\cc$.
\end{lem}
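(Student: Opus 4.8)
The plan is to construct a suitable minimal generating set of $I_S$ degree by degree, using the combinatorial description of minimal binomial generators via factorization graphs, and to arrange that in every relevant degree each chosen binomial involves a factorization that ``starts from $\cc$''.

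First I would dispose of the degenerate case $\cc=0$. Then $s=s(\cc)=0$, so $x^\cc=1$ divides every monomial and $\AP(S,0)=\emptyset$; hence the conclusion holds trivially for any minimal generating set $E$. So assume $\cc\neq 0$ and fix an index $i$ with $c_i>0$, that is $x_i\in\supp(x^\cc)$. Next I would recall the standard description of a minimal generating set of the semigroup-graded binomial ideal $I_S$: for each $u\in S$ form the graph $\nabla_u$ on the vertex set $\F(u)$ in which two factorizations $\a,\b$ are joined by an edge whenever their supports meet, equivalently when $\gcd(x^\a,x^\b)\neq 1$. A minimal generating set may then be taken as a union, over those $u$ for which $\nabla_u$ is disconnected, of binomials $x^\a-x^\b$ with $\a$ and $\b$ lying in distinct connected components of $\nabla_u$, choosing one binomial for each edge of a spanning tree of the component graph of $\nabla_u$ (so exactly $(\#\text{components of }\nabla_u)-1$ binomials in degree $u$).

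Call a factorization $\a\in\F(u)$ \emph{distinguished} if $x^\cc$ divides $x^\a$, i.e. $\a=\cc+\d$ for some $\d\in\F(u-s)$. Such factorizations exist precisely when $u-s\in S$, that is, when $u\notin\AP(S,s)$. The key observation is that, since $c_i>0$, any two distinguished factorizations $\cc+\d$ and $\cc+\d'$ both contain the variable $x_i$ in their supports, hence are adjacent in $\nabla_u$; consequently all distinguished factorizations of $u$ lie in one single connected component $C^\ast$ of $\nabla_u$. With this in hand I would assemble $E$ as follows. For each $u\in S$ with $\nabla_u$ disconnected and $u\notin\AP(S,s)$, fix one distinguished factorization $\a^\ast=\cc+\d^\ast\in C^\ast$ and, for every other component $C$ of $\nabla_u$, a vertex $\b_C\in C$, and take the binomials $x^{\a^\ast}-x^{\b_C}$. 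These form a star centered at $C^\ast$, hence a spanning tree of the component graph, so they constitute a minimal set of degree-$u$ generators, and each has the term $x^{\a^\ast}=x^\cc x^{\d^\ast}$ divisible by $x^\cc$. For the remaining disconnected degrees $u\in\AP(S,s)$, choose any spanning tree of binomials with no further constraint. The union over all such $u$ is a minimal generating set $E$ of $I_S$, and by construction every $x^\a-x^\b\in E$ whose degree $u=s(\a)$ satisfies $u\notin\AP(S,s)$ has a term divisible by $x^\cc$, as required.

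The main obstacle, and the heart of the argument, is the clique/connectivity observation that forces all distinguished factorizations into a single component $C^\ast$: this is exactly what allows a \emph{star} rooted at $C^\ast$ (rather than an arbitrary spanning tree) to realize a minimal generating set in each degree $u\notin\AP(S,s)$, so that one endpoint of every generator can be taken distinguished. The one routine point still to be checked is that assembling the per-degree binomials yields a genuinely minimal generating set of $I_S$; this is the standard fact about semigroup-graded binomial ideals invoked above.
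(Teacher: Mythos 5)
Your proof is correct, but it follows a genuinely different route from the paper's. The paper proceeds by an elementary replacement argument: starting from an arbitrary finite generating set, any bad binomial $x^\a-x^\b$ (one with $s(\a)\notin\AP(S,s)$ and neither term divisible by $x^\cc$) is replaced by the pair $x^\a-x^\d$, $x^\b-x^\d$, where $\d$ is a factorization of $s(\a)$ with $d_i\geq c_i$ for all $i$ --- such a $\d$ exists precisely because $s(\a)-s\in S$; the old binomial is the difference of the two new ones, so the enlarged set still generates, the two new binomials are good, hence the process terminates, and a final pruning to a minimal generating set preserves the property. You instead construct a minimal generating set directly from the graph-theoretic description of minimal presentations (spanning trees on the connected components of the factorization graphs $\nabla_u$, as in \cite{GR}), and your key clique observation --- all factorizations of $u$ divisible by $x^\cc$ contain $x_i$ in their support, hence lie in a single component $C^\ast$ --- lets you realize the tree in every degree $u\notin\AP(S,s)$ as a star anchored at one distinguished factorization. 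Your route buys a one-shot construction with degree-by-degree control of minimality (no pruning step, and it shows one may even use the same distinguished term in all generators of a given degree), at the cost of invoking the structure theorem for minimal presentations; the paper's argument is shorter and self-contained, needing only that a generating set of the graded ideal $I_S$ can be cut down to a minimal one. Both arguments ultimately rest on the same fact: $u\notin\AP(S,s)$ exactly when some factorization of $u$ extends $\cc$.
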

\begin{proof}
	Let $E_1$ be a finite set of generators for $I_S$. If $g=x^\a-x^\b\in E_1$ with $z=s(\a)\notin\AP(S,s)$, then  $z=\sum^d_{i=1}d_in_i$, where  $d_i\geq c_i$.
	Now \[E_2=(E_1\setminus\{g\})\cup\{x^\a-x^\d, x^\b-x^\d\}\] is again a finite set of generators for $I_S$. Continuing in this way, we get  a generating set with the desired property and removing extra elements we have a minimal set of generators.
\end{proof}	

\begin{lem}\label{lem}
Let $E$ be a subset of homogeneous binomials in $I_S$ and $J$ be the ideal generated by $E$.
Then any binomial in $J$  is homogeneous.
\end{lem}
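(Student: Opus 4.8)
The plan is to exploit two facts: first, that the natural grading on $P$ makes $J$ a \emph{homogeneous} ideal, because its generating set $E$ consists of homogeneous binomials; and second, that $J$, being contained in $I_S$, cannot contain any nonzero monomial. Combining these forces every binomial of $J$ to have its two terms in the same degree.

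First I would verify that $J$ is graded. Since each generator $x^\a-x^\b\in E$ satisfies $|\a|=|\b|$, it is homogeneous of degree $|\a|$ for the natural grading on $P$, and an ideal generated by homogeneous elements is homogeneous. Consequently, if $f\in J$ then each homogeneous component of $f$ again lies in $J$. Next I would record that $I_S$, and hence its subideal $J$, contains no monomial: for any exponent vector $\a$ one has $\varphi(x^\a)=t^{s(\a)}\neq 0$ in $\kk[t]$, so $x^\a\notin I_S=\ker\varphi$. (Equivalently, $\kk[S]$ is a domain, so $I_S$ is prime and contains no variable, hence no monomial.)

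Now take a nonzero binomial $f=x^\a-x^\b\in J$ and suppose toward a contradiction that $|\a|\neq|\b|$. Then $x^\a$ and $x^\b$ lie in distinct graded pieces of $P$, so the two homogeneous components of $f$ are exactly $x^\a$ and $-x^\b$. By the first step each of them lies in $J$; in particular the monomial $x^\a$ belongs to $J\subseteq I_S$, contradicting the second step. Hence $|\a|=|\b|$, and $f$ is homogeneous.

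The argument is essentially immediate once these two observations are in place; there is no real obstacle beyond noticing that the homogeneity of $J$ interacts with the monomial-freeness of $I_S$. The only minor points needing care are the trivial case $f=0$ (vacuously homogeneous) and, should binomials be allowed nonunit coefficients from $\kk$, the remark that scaling a monomial by a nonzero element of the field does not affect membership in $I_S$, so the contradiction persists.
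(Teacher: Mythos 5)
Your proof is correct and follows essentially the same route as the paper: since $E$ consists of homogeneous elements, $J$ is a graded ideal, so a binomial $x^\a-x^\b\in J$ with $|\a|\neq|\b|$ would force the monomial $x^\a$ into $J\subseteq I_S$, which is impossible. The paper states this in three lines; you have merely filled in the (correct) justifications for why $J$ is graded and why $I_S$ contains no monomials.
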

\begin{proof}
Let $f=x^\a-x^\b$ be a binomial in $J$.
If $|\a|\neq|\b|$, then $x^\a$ is a homogeneous component of $f$. Hence $x^\a\in J\subseteq I_S$, a contradiction.
\end{proof}	

\begin{prop}\label{homog-prop}
	Let $s\in S$.  The following statements are equivalent.
	\begin{enumerate}
		\item $\AP(S,s)$ is homogeneous.
		\item For any factorization $\cc$ of $s$, there exists a minimal set of generators $E$ for $I_S$ such that one  term of  each non-homogeneous element of $E$ is  divisible by $x^\cc$.
\item There is a factorization $\cc$ of $s$, and  a minimal set of generators $E$ for $I_S$ such that one  term of  each non-homogeneous element of $E$ is  divisible by $x^\cc$. 
	\end{enumerate}	
\end{prop}
\begin{proof}
	(1)$\Rightarrow$(2): Let $\cc$ be a factorization of $s$. By Lemma~\ref{key}, there exists a minimal set of generators $E$ of $I_S$ such that  each binomial $x^\a-x^\b\in E$ with $s(\a)\notin\AP(S,s)$,   has one term divisible by $x^\cc$.
		Let $f=x^\a-x^\b$ be a non-homogeneous binomial in $E$. Then
	$x=\sum^d_{i=1}a_in_i=\sum^d_{i=1}b_in_i$ has two expressions with different lengths. Hence $x\notin\AP(S,s)$ and so $x^\a$ or $x^\b$ is divisible by $x^\cc$.
	
	\vspace{2mm}
	(2)$\Rightarrow$(3): It is clear.
	
		\vspace{2mm}
	(3)$\Rightarrow$(1):  Let 	
	 $\{f_1,\ldots,f_n\}=\{f\in E \mid \mbox{ no term of } f \mbox{ is divisible by } x^\cc \}$ and let $z\in\AP(S,s)$ with two expressions $z=\sum^d_{j=1}a_jn_j=\sum^d_{j=1}b_jn_j$. Then $x^\a-x^\b\in (f_1,\ldots,f_n)$. Since all $f_j$ are homogeneous by the hypothesis, we get  $|\a|=|\b|$, from Lemma~\ref{lem}.

\end{proof}

\begin{cor}\label{homog}
The following statements are equivalent for $i=1,\ldots,d$.
\begin{enumerate}
\item $\AP(S,n_i)$ is homogeneous.
\item There exists a minimal set of generators $E$ for $I_S$ such that $x_i$ belongs to the support of all non-homogeneous elements of $E$.
\end{enumerate}
\end{cor}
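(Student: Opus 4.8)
The plan is to read off this corollary as the case $s=n_i$ of Proposition~\ref{homog-prop}, once the divisibility condition appearing there has been rephrased as the support condition stated here. The point that makes the specialization transparent is that a minimal generator has a unique factorization. Concretely, if $n_i=\sum_{j=1}^d c_jn_j$ is any factorization with $c_j\geq 0$, then the case $c_i\geq 1$ gives $0\leq \sum_{j\neq i}c_jn_j=(1-c_i)n_i\leq 0$, forcing $c_i=1$ and $c_j=0$ for $j\neq i$, while the case $c_i=0$ would write $n_i$ as a nontrivial combination of the remaining generators and violate the minimality of $\n$. Hence $\F(n_i)=\{\cc\}$ with $x^{\cc}=x_i$.

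Granting this, statement~(1) of the corollary is exactly statement~(1) of Proposition~\ref{homog-prop} with $s=n_i$. For statement~(2), since $n_i$ has the single factorization $\cc$ with $x^{\cc}=x_i$, the universally quantified clause ``for any factorization $\cc$ of $s$'' in part~(2) of the proposition collapses to one condition: there is a minimal generating set $E$ of $I_S$ such that one term of each non-homogeneous binomial of $E$ is divisible by $x_i$.

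It remains to identify this with the support formulation. For a binomial $f=x^\a-x^\b$, the requirement that one of its terms be divisible by $x_i$ means $a_i\geq 1$ or $b_i\geq 1$, i.e. $a_i+b_i\neq 0$, which by definition is precisely $x_i\in\supp(f)$. Running this equivalence over every non-homogeneous element of $E$ turns part~(2) of the proposition into part~(2) of the corollary, and the proof is complete.

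I do not anticipate a real obstacle, since the corollary is a direct specialization. The only steps needing any care are the uniqueness of the factorization of $n_i$, which depends on $\n$ being a minimal system of generators, and the trivial but necessary remark that ``a term divisible by $x_i$'' and ``$x_i\in\supp(f)$'' coincide for binomials.
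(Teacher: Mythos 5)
Your proof is correct and is essentially the paper's own (implicit) argument: the corollary is stated there without proof precisely because it is the specialization of Proposition~\ref{homog-prop} to $s=n_i$, using that minimality of $\n$ forces $\F(n_i)=\{\cc\}$ with $x^{\cc}=x_i$, and that for a binomial $f=x^\a-x^\b$ the condition ``some term of $f$ is divisible by $x_i$'' is the same as $x_i\in\supp(f)$. Your write-up just makes these two routine verifications explicit.
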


Consider the natural map
\[\pi:P = \kk[x_1,\ldots,x_d]\longrightarrow \bar{P}:=\kk[x_2,\ldots,x_d],\] where $\pi(x_1)=0$ and $\pi(x_i)=x_i$ for $i=2,\ldots,d$. Then
\[\bar{P}/\pi(I_S)\cong P/(I_S,x_1).\]

For a polynomial $f\in P$, we set $\bar{f}:=\pi(f)$  and for a vector of non-negative integers $\a=(a_1,\ldots,a_d)$, we set
\[\bar{\a}=\left\{\begin{array}{ll} \a & \mbox{ if } a_1=0 \\ (a_1-1,a_2,\ldots,a_d) & \mbox{ if } a_1>0.  \end{array}\right.\]


\medskip
\begin{rem}\label{maximal-bar}
Let $E$ be  a minimal set of generators for $I_S$ and $x^\a-x^\b\in E$. Note that $a_ib_i=0$ for all $i=1,\ldots,d$, since $I_S$ is prime and $E$ is a minimal set of generators. If $b_1\neq 0$, then we  replace $x^\a-x^\b$ by $x^\a-x_1x^{\cc}$ and $x^\b-x_1x^{\cc}$, where $\cc$ is a maximal expression in $\F(s(\bar{b}))$.
Finally, we will have a set of minimal generators with the property that, for any $x^\a-x^\b\in E$ with $b_1\neq 0$, we have $\bar{\b}$ is a maximal expression.
\end{rem}

Next theorem is one of the main results in the paper.

\begin{thm}\label{**}
Let $S$ be a numerical semigroup. The following statements are equivalent.
\begin{enumerate}
\item  $S$ is homogeneous and $G(S)$ is Cohen-Macaulay.
\item  For all $x^\a-x^\b\in I_S$ with $|\a|>|\b|$, we have $s(\a)\notin\AP(S,n_1)$. Moreover if $\bar{\b}$ is a maximal factorization, then  $a_1\geq b_1$.
\item  There exists a minimal set of binomial generators $E$ for $I_S$ such that for all $x^\a-x^\b\in E$ with $|\a|>|\b|$, we have   $a_1\neq 0$.
\item There exists a minimal set of binomial generators $E$ for $I_S$ which is a standard basis and for all $x^\a-x^\b\in E$ with $|\a|>|\b|$, we have   $a_1\neq 0$.
\item There exists a minimal Gr\"obner  basis $G$ for $I_S$ with respect to $<_{ds}$, such that  $x_1$ belongs to the support of all non-homogeneous elements  of $G$ and $x_1$ does~not divide  $\lm_{<_{ds}}(f)$,  for all  $f\in G$.
\end{enumerate}
\end{thm}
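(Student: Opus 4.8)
The plan is to establish $(1)\Leftrightarrow(2)$ together with the cycle $(2)\Rightarrow(3)\Rightarrow(4)\Rightarrow(5)\Rightarrow(1)$, so that all five conditions become equivalent. The backbone is the remark that the \emph{first} assertion in $(2)$ is nothing but the homogeneity of $S$: if $x^\a-x^\b\in I_S$ with $|\a|>|\b|$ and $s(\a)\in\AP(S,n_1)$, then $s(\a)$ has two lengths, contradicting Definition~\ref{D}, and conversely a non-maximal expression inside $\AP(S,n_1)$ produces exactly such a binomial. Thus $(1)\Leftrightarrow(2)$ reduces to showing that, \emph{under homogeneity}, the second clause of $(2)$ is equivalent to $T(S)=\emptyset$, i.e.\ to the Cohen--Macaulayness of $G(S)$ via Proposition~\ref{CM}.

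For $(1)\Rightarrow(2)$ I would argue the second clause by showing the dangerous case cannot occur: if $b_1>0$ and $\bar\b$ is maximal, then $\ord_S(s(\b)-n_1)=|\b|-1$, and additivity of $\ord_S$ against $n_1$ (valid since $T(S)=\emptyset$) forces $\ord_S(s(\b))=|\b|$; but $|\a|>|\b|=\ord_S(s(\a))$ is impossible, so this case is empty and $a_1\ge b_1$ holds vacuously, while for $b_1=0$ it is trivial. For $(2)\Rightarrow(1)$ I would produce from a torsion element a binomial violating the second clause: if $T(S)\ne\emptyset$, telescoping the increasing sequence $\ord_S(s+kn_1)$ yields $y\in S$ with $\ord_S(y+n_1)\ge\ord_S(y)+2$; then \emph{every} maximal factorization $\a$ of $y+n_1$ has $a_1=0$ (otherwise removing one $x_1$ from $\a$ would factor $y$ in length exceeding $\ord_S(y)$), whereas a maximal factorization $\delta$ of $y$, augmented by one copy of $x_1$, gives $\b$ with $\bar\b=\delta$ maximal and $b_1\ge1$. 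The binomial $x^\a-x^\b$ then has $|\a|>|\b|$, $\bar\b$ maximal, yet $a_1=0<b_1$, contradicting $(2)$.

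For $(2)\Rightarrow(3)$ I would begin with a minimal binomial generating set in which $x_1$ lies in the support of every non-homogeneous element (Corollary~\ref{homog}), and then apply the normalisation of Remark~\ref{maximal-bar} to arrange that $\bar\b$ is maximal whenever $b_1\ne0$; writing each non-homogeneous generator as $x^\a-x^\b$ with $|\a|>|\b|$, the case $a_1=0,\ b_1\ne0$ is excluded by the second clause of $(2)$ (it would force $a_1\ge b_1>0$), so $a_1\ne0$. The implication $(5)\Rightarrow(1)$ I would prove directly: Cohen--Macaulayness is Proposition~\ref{ds} and Corollary~\ref{Codc} applied to the Gröbner basis $G$, while homogeneity follows from a normal-form argument. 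Indeed, if $z\in\AP(S,n_1)$ had two factorizations $\a,\b$ with $|\a|>|\b|$, both are $x_1$-free (as $z-n_1\notin S$); every monomial occurring while reducing $x^\a-x^\b$ modulo $G$ is again a factorization of $z$, hence $x_1$-free, so only $x_1$-free elements of $G$ are ever used, and these are homogeneous by $(5)$, hence degree-preserving. Consequently the trailing term $x^\a$ of degree $|\a|>|\b|$ is never cancelled, contradicting $x^\a-x^\b\in(G)$.

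The technical heart — and the step I expect to be the main obstacle — is the bridge $(3)\Rightarrow(4)\Rightarrow(5)$, where the Cohen--Macaulay property must be \emph{extracted} from the purely combinatorial condition on the generators. The generating set from $(3)$ already has all $<_{ds}$-leading monomials free of $x_1$ (for a non-homogeneous generator $a_1\ne0$ forces $b_1=0$ and $\lm_{<_{ds}}=x^\b$; for a homogeneous one the disjoint support keeps $x_1$ off the leading monomial), but this alone does \emph{not} make it a standard or Gröbner basis, and a priori a Buchberger/Mora completion could drag $x_1$ into a leading term. I would therefore prove $(3)\Rightarrow(4)$ by showing this set is in fact a standard basis, using the maximality installed by Remark~\ref{maximal-bar} to run a normal-form computation in which the initial forms $f_*$ — the monomials $x^\b$ with $b_1=0$ together with $x_1$-free homogeneous binomials — already generate $(I_S)_*$; this makes $x_1$ a nonzerodivisor on $G(S)$ and hence forces the tangent cone to be Cohen--Macaulay. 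With $(4)$ in hand, $(4)\Rightarrow(5)$ refines the standard basis to a minimal Gröbner basis for $<_{ds}$, with Corollary~\ref{Codc} ensuring that $x_1$ remains out of every leading monomial while the support condition on the non-homogeneous elements is preserved. The crux throughout is the control of the single variable $x_1$ under S-polynomial reduction, which is exactly what converts the combinatorial placement of $x_1$ in $(3)$ into the Cohen--Macaulayness of $G(S)$.
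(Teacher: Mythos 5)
Your overall architecture --- the equivalence $(1)\Leftrightarrow(2)$ together with the cycle $(2)\Rightarrow(3)\Rightarrow(4)\Rightarrow(5)\Rightarrow(1)$ --- is logically sound, and several steps are correct and even self-contained where the paper cites auxiliary results. Your $(1)\Rightarrow(2)$ is the paper's argument in a mildly sharpened form; your direct $(2)\Rightarrow(1)$ (telescoping a torsion element to get $y$ with $\ord_S(y+n_1)\geq\ord_S(y)+2$, then producing a binomial with $a_1=0<b_1$ and $\bar\b$ maximal) is correct and does not appear in the paper, which reaches this implication only around the cycle; your $(2)\Rightarrow(3)$ via Corollary~\ref{homog} and Remark~\ref{maximal-bar} is a legitimate variant of the paper's replacement construction; and your normal-form proof of $(5)\Rightarrow(1)$ --- every monomial occurring while reducing $x^\a-x^\b$ is a factorization of the same element of $\AP(S,n_1)$, hence $x_1$-free, so only homogeneous elements of $G$ can act and the two degrees never merge --- is a valid substitute for the paper's appeal to Corollary~\ref{homog} and Proposition~\ref{ds}.

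The genuine gap is the step you yourself flag as the main obstacle, $(3)\Rightarrow(4)$, and what you write does not repair it. Saying you will ``run a normal-form computation in which the initial forms already generate $(I_S)_*$'' restates the standard-basis property; it is not an argument, and the obstruction is a real circularity. Let $J:=(e_*\mid e\in E)$ for the set $E$ given by $(3)$. What $(3)$ yields for free is only an equality modulo $x_1$: each non-homogeneous $e=x^\a-x^\b\in E$ has $x_1\mid x^\a$ and, by primeness and minimality, $x_1\nmid x^\b$, so modulo $x_1$ every element of $E$ coincides (up to sign) with its initial form; hence $(I_S,x_1)=(J,x_1)$ is generated by monomials, homogeneous binomials and $x_1$, is therefore a homogeneous ideal, and one gets $((I_S)_*,x_1)=(J,x_1)$. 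But to cancel $x_1$ and conclude $J=(I_S)_*$ one needs $x_1^*$ to be a nonzerodivisor on $G(S)=P/(I_S)_*$ --- which is precisely the Cohen--Macaulayness the theorem is supposed to deliver, and it is not automatic on $P/J$ either, since $J$ may contain homogeneous binomials involving $x_1$. Thus your sentence ``this makes $x_1$ a nonzerodivisor on $G(S)$'' uses the conclusion as an ingredient. The paper breaks this circle by invoking the lifting lemma \cite[Lemma~1.2]{HS}: the images $\pi(E)$ form a homogeneous, hence standard, basis of $\pi(I_S)$, and that lemma (whose proof carries the actual technical weight) lifts this to the statement that $E$ itself is a standard basis of $I_S$; without citing it or reproducing its proof, your $(3)\Rightarrow(4)$ is unproven. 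The same criticism applies, less severely, to your $(4)\Rightarrow(5)$: a standard basis need not be a Gr\"obner basis for $<_{ds}$, completion can create non-homogeneous elements whose support misses $x_1$, and Corollary~\ref{Codc} controls only leading monomials, not supports. The paper's fix is a swap: for each non-homogeneous $g$ in the reduced Gr\"obner basis, $g_*=\lm_{<_{ds}}(g)$ is a monomial, and the standard-basis property of $E$ together with Remark~\ref{Gb} forces $\lm_{<_{ds}}(g)=\lm_{<_{ds}}(f)$ for some non-homogeneous $f\in E$, so $g$ may be replaced by $f$; some argument of this kind is missing from your sketch.
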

\begin{proof}
(1)$\Rightarrow$(2):
The first statement follows by Definition~\ref{D}.  For the second part,
let $s:=s(\bar{\b})$. Then $\ord_S(s)=\sum^d_{i=1}\bar{b}_i$. If $a_1<b_1$, then $\bar{b}_1=b_1-1$ and $s+n_1=\sum^d_{i=1}a_in_i$. Hence $\ord_S(s+n_1)\geq\sum^d_{i=1}a_i>1+\ord_S(s)$, which implies that $s$ is a torsion element and this contradicts  Proposition~\ref{CM}.

\vspace{2mm}
(2)$\Rightarrow$(3):
Let $E_1$ be a finite set of generators for $I_S$.
Let $g=x^\a-x^\b\in E_1$ with $|\a|>|\b|$ and $a_1=0$. Then $s=\sum^d_{i=1}a_in_i=\sum^d_{i=1}b_in_i\notin\AP(S,n_1)$ by the statement~(2). Therefore $s=\sum^d_{i=1}c_in_i$, where  $c_1>0$ and  $\sum^d_{i=2}c_in_i$ is a  maximal expression.  Now, $E_2=(E_1\setminus\{g\})\cup\{x^\a-x^\cc, x^\b-x^\cc\}$ is again a finite set of generators for $I_S$.
Note also that $c_1>a_1$ and $c_1>b_1$ by the statement of~(2). Continuing in this way, we get  a generating set with the desired property and removing extra elements we have a minimal set of generators.

\vspace{2mm}
(3)$\Rightarrow$(4): We use the idea in the proof of \cite[Theorem~1.4]{HS}.
Let \[E=\{f_1,\ldots,f_t,g_1,\ldots,g_r\},\] where $f_1,\ldots,f_t$ are  homogeneous binomials and $g_1,\ldots,g_r$ are non-homogeneous.
Let $g_i=x^{\a_i}-x^{\b_i}$ with $|\a_i|>|\b_i|$. Then $x_1$ divides $x^{\a_i}$ and so
 $\pi(g_i)=x^{\b_i}$.  Hence
 $\pi(I_S)$ is generated by $B=\{\bar{f_1},\ldots,\bar{f_t},x^{\b_1},\ldots,x^{\b_r}\}$. Note that  $\bar{f_i}$ is either  equal to $f_i$ which is homogeneous, or it is a monomial. Therefore $B$ is a homogeneous set of generators for $\pi(I_S)$. In particular $B$ is a standard basis of $\pi(I_S)$. Now, using \cite[Lemma~1.2]{HS}  we get that $E$ is a standard basis of $I_S$ (see the proof of \cite[Theorem~1.4]{HS}).

\vspace{2mm}
(4)$\Rightarrow$(5):
Let $G=\{g_1=x^{\cc_1}-x^{\d_1},\ldots,g_s=x^{\cc_s}-x^{\d_s}\}$ be the reduced  Gr\"obner  basis with
$\lm_{<_{ds}}(g_i)=x^{\cc_i}$ and $E=\{f_1=x^{\a_1}-x^{\b_1},\ldots,f_r=x^{\a_r}-x^{\b_r}\}$. If $g_i$ is non-homogeneous, then $|\cc_i|<|\d_i|$ and so ${g_i}_*=x^{\cc_i}$. Since $E$ is a standard basis, ${f_j}_*$ divides $x^{\cc_i}$ for some $j$. Therefore $f_j$ is non-homogeneous, say  $|\b_j|<|\a_j|$. Then ${f_j}_*=x^{\b_j}=\lm_{<_{ds}}(f_j)$ divides $\lm_{<_{ds}}(g_i)$, which implies that $x^{\b_j}=\lm_{<_{ds}}(g_i)=x^{\cc_i}$ by Remark~\ref{Gb}. Note that $x_1$ is not in the support of homogeneous elements of $G$ from Remark~\ref{hds}. Now replacing $g_i$ with $f_j$, we get the desired Gr\"obner basis.

 \vspace{2mm}
 (5)$\Rightarrow$(1):
 From Corollary~\ref{homog} it follows that $S$ is homogeneous and Proposition~\ref{ds} implies the Cohen-Macaulayness of $G(S)$.
\end{proof}

The following example illustrates the fact that even if $S$ is homogeneous and $G(S)$ is Cohen-Macaulay, not any minimal generating set for $I_S$ satisfies the properties of the theorem.

\begin{ex}
	Let $S=\langle 8,10,12,25\rangle$. Then  $\AP(S,8)=\{0, 25, 10, 35, 12, 37, 22, 47\}$ and $G_1=\{x_1^3-x_3^2, x_2^5-x_4^2, x_1x_3-x_2^2\}$ is a minimal generating set (the reduced Gr\"obner basis) for $I_S$.
	We can easily see that $\AP(S,8)$ is a homogeneous set, while $x_2^5-x_4^2$ is a non-homogeneous element without $x_1$ in its support. Note that  $2\times 25=5\times10=8+3\times 10+12$. Hence replacing $x_2^5-x_4^2$ by $x_1x_2^3x_3-x_2^5$ and $x_1x_2^3x_3-x_4^2$, we get the minimal generating set $G_2=\{x_1^3-x_3^2, x_1x_2^3x_3-x_2^5, x_1x_2^3x_3-x_4^2, x_1x_3-x_2^2\}$  which is also a Gr\"obner basis and  satisfies the properties (3) and (5) of Theorem~\ref{**}.
\end{ex}

By a general result due to Robbiano (see \cite{Ro}, \cite{HRV}), Betti numbers of the associated graded ring $G(S)$ are upper bounds for Betti numbers of the semigroup ring $R$ i.e. $\beta_i(R)\leq\beta_i(G(S))$ for all $i\geq 1$.

\begin{dfn}
The semigroup $S$ is called \emph{of homogeneous type} if $\beta_i(R)=\beta_i(G(S))$ for all $i\geq 1$.
\end{dfn}
\begin{rem}\label{ci}
If  $S$  is of homogeneous type, then  $\depth(G(S))=\depth(R)=1$ and so $G(S)$ is Cohen-Macaulay.
\end{rem}
\begin{rem}\label{koszul}
	If   $G(S)$ is complete intersection, then  $\beta_1(R)=\beta_1(G(S))=d-1$.  Hence the free resolutions of $R$ and $G(S)$ coincide with  the Koszul complexes with respect to $d-1$ elements and so $S$ is of homogeneous type.
\end{rem}

The following  result is inspired  by the ideas given in the proof of \cite[Theorem 1.4]{HS}.
\begin{thm}\label{homogeneous type}
Let $S$ be a homogeneous numerical semigroup  with  Cohen-Macaulay tangent cone. Then $S$ is of homogeneous type.
\end{thm}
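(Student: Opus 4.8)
The plan is to establish equality of the two sets of Betti numbers by cutting both rings down to a common Artinian quotient using a single nonzerodivisor, namely $x_1 = t^{n_1}$ on the semigroup-ring side and its initial form $x_1^*$ on the tangent-cone side. The bridge between the two sides is the projection $\pi\colon P \to \bar P$ sending $x_1 \mapsto 0$, and the crux is the identity $\pi(I_S) = \pi\big((I_S)_*\big)$, from which $R/x_1 R \cong G(S)/x_1^* G(S)$ follows immediately.

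To obtain this identity I would start from Theorem~\ref{**}: since $S$ is homogeneous with Cohen--Macaulay tangent cone, its statement~(4) furnishes a minimal binomial generating set $E = \{f_1,\dots,f_t,g_1,\dots,g_r\}$ of $I_S$ that is a standard basis, with the $f_i$ homogeneous and each non-homogeneous $g_j = x^{\a_j} - x^{\b_j}$, normalized so $|\a_j| > |\b_j|$, satisfying $x_1 \mid x^{\a_j}$. Since $E$ is a minimal generating set of the prime ideal $I_S$, the monomials $x^{\a_j}$ and $x^{\b_j}$ have disjoint supports (Remark~\ref{maximal-bar}), so $x_1 \nmid x^{\b_j}$; hence $\pi(g_j) = -x^{\b_j} = (g_j)_* = \pi\big((g_j)_*\big)$. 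For the homogeneous generators $(f_i)_* = f_i$, so $\pi(f_i) = \pi\big((f_i)_*\big)$ trivially. Because $E$ is a standard basis, $(I_S)_*$ is generated by the initial forms $(f_1)_*,\dots,(f_t)_*,(g_1)_*,\dots,(g_r)_*$, and applying $\pi$ together with the matchings just recorded yields $\pi\big((I_S)_*\big) = \pi(I_S)$.

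With this in hand the conclusion follows from a regular-element reduction on each side. Since $R$ is a domain and $x_1 = t^{n_1}\neq 0$, the element $x_1$ is a nonzerodivisor on $R$, so $\beta_i(R) = \beta_i^{\bar P}(R/x_1 R)$. On the graded side $G(S)$ is one-dimensional Cohen--Macaulay and $x_1^*$ is a degree-one homogeneous parameter (recall that $t^{n_1}$ generates a minimal reduction of $\fm$), hence a nonzerodivisor on $G(S)$, so $\beta_i(G(S)) = \beta_i^{\bar P}(G(S)/x_1^* G(S))$. Finally $R/x_1 R \cong \bar P/\pi(I_S) = \bar P/\pi\big((I_S)_*\big) \cong G(S)/x_1^* G(S)$, so the two reduced modules coincide and therefore $\beta_i(R) = \beta_i(G(S))$ for all $i$; that is, $S$ is of homogeneous type.

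The step I expect to demand the most care is the equality $\beta_i(G(S)) = \beta_i^{\bar P}(G(S)/x_1^* G(S))$: one must confirm that $x_1^*$ is an honest nonzerodivisor on $G(S)$ --- this is exactly where the Cohen--Macaulay hypothesis enters, upgrading the parameter $x_1^*$ to a regular element --- and that reducing the minimal graded free resolution modulo $x_1^*$ keeps it minimal. A secondary point worth stating carefully is that it is the \emph{standard basis} property in Theorem~\ref{**}(4), and not merely a generating property, that forces $\pi(I_S)$ and $\pi\big((I_S)_*\big)$ to be \emph{equal} rather than one contained in the other.
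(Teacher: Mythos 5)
Your proof is correct, and its skeleton coincides with the paper's: take the minimal standard basis $E$ furnished by Theorem~\ref{**}(4), reduce $R$ by the regular element $x_1$ and $G(S)$ by $x_1^*$ (Cohen--Macaulayness being used exactly where you say, to make $x_1^*$ regular on $G(S)$), and identify the two quotients through the projection $\pi$. Where you genuinely diverge is in how that identification is obtained. The paper only records that $\pi(I_S)$ is a homogeneous ideal, and then identifies $\bar{P}/\pi(I_S)$ with $G(S)/x_1^*G(S)$ by passing through associated graded rings: since $x_1^*$ is a nonzerodivisor on $G(S)$, Herzog's superregularity lemma (\cite[Lemma, p.~185]{H}) gives $\gr_{\bar{\fn}}(\bar{P}/\pi(I_S))\cong \gr_{\fn}(P/I_S)/x_1\gr_{\fn}(P/I_S)$, and the left-hand side is $\bar{P}/\pi(I_S)$ itself because that ring is graded. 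You instead prove the ideal identity $\pi(I_S)=\pi\bigl((I_S)_*\bigr)$ outright, by matching the images under $\pi$ of the generators in $E$ with the images of their initial forms; this matching is legitimate precisely because $E$ is a standard basis (so the initial forms generate $(I_S)_*$) and because of the disjoint-support observation of Remark~\ref{maximal-bar} (so $x_1\nmid x^{\b_j}$ and the surviving terms agree). Your route is more self-contained: it replaces the citation to Herzog's lemma by an elementary generator computation and makes explicit an identity that the paper's argument yields only implicitly. The paper's route asks less of the generating set---homogeneity of $\pi(I_S)$ suffices, with the superregularity lemma doing the bookkeeping your matching does by hand---but at the price of the external reference. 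Both arguments invoke the Cohen--Macaulay hypothesis at the same single point, and both require the final observation, which you correctly flag, that reduction modulo a regular element takes minimal resolutions to minimal resolutions.
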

\begin{proof}
Let $E=\{f_1,\ldots,f_t,g_1,\ldots,g_r\}$ be the minimal set of generators of $I_S$ which exists by Theorem~\ref{**}(4) and it is also a standard basis. Let $f_1,\ldots,f_t$ be  homogeneous binomials and $g_1,\ldots,g_r$ be non-homogeneous. Then
the term of $g_i$ which is not the leading term is divisible by $x_1$, for $i=1,\ldots,r$. Hence $\pi(I_S)$ is a homogeneous ideal and so
\[\beta_i(\bar{P}/\pi(I_S))=\beta_i(\gr_{\bar{\fn}}(\bar{P}/\pi({I}_S)),\]
where $\bar{\fn}=\pi(\fn)$. Note that $G(S)$ is indeed the completion of $\gr_\fn(P/I_S)$ with respect to the $\fm$-adic topology. As $x_1$ is a non-zero-divisor on $G(S)$, it is also a regular element of
$\gr_\fn(P/I_S)$ and  $P/I_S$ as well. Hence
\[\gr_{\bar{\fn}}(\bar{P}/\pi({I}_S))\cong\gr_\fn(P/I_S)/x_1\gr_\fn(P/I_S),\]
by \cite[Lemma, P. 185]{H}.
 Now, the result follows from the fact that Betti numbers are preserved under dividing by regular elements.
\end{proof}

\begin{cor}\label{almost maximaleambedding}
Let $S$ be a numerical semigroup ring with almost embedding dimension. Then, $S$ is of homogeneous type if and only if it has reduction number two.
\end{cor}
\begin{proof}
By Example \ref{maximal embedding} we know that $S$ is homogeneous. Then, it is of homogeneous type if and only if $G(S)$ is Cohen-Macaulay. By \cite[Theorem 2.1]{RoVa} this happens if and only if the reduction number is two.
\end{proof}

\begin{rem}
	A Cohen-Macaulay local ring $(A,\fm)$ is \textit{stretched}, in the sense of \cite{Sa}, if and only if  $A$ admits an artinian reduction $(B, \fn)$ such that $\fn^2$ is principal.
Let $S$ be a numerical semigroup. Then $\kk[\![S]\!]$ is stretched if and only if $\AP(S,n_1)$ has a unique element of order two. If the order of all elements in the Ap\'ery set is at most two, then it has  almost maximal embedding dimension and so it is homogeneous. Assume that $\AP(S,n_1)$ has an element with order greater than two. Set $w\in \AP(S,n_1)$ the only element of order two. It's easy to see that $w=2n_{i_1}$ where $n_{i_1}$ is a minimal generator of $S$. Now, let $s=\sum^d_{i=2}r_in_i\in\AP(S,n_1)$ with $\ord_S(s)=\sum^d_{i=2}r_i>2$. As any subexpression of $s$ is again in the Ap\'ery set, any maximal subexpression of $s$ with length 2 should be equal to $w$. In particular $n_{i_1}=n_{i_2}$ and $s=ln_{i_1}$, where $l=\ord_S(s)$. Therefore $\{s\in\AP(S,n_1) ; \ord_S(s)\geq 2\}$ has only one maximal element with respect to $\preceq$. Now assume that its Cohen-Macaulay type is equal to $d-1$. The Cohen-Macaulay type of $S$ is exactly the number of maximal elements of $\AP(S,n_1)$ hence $\Max_{\preceq}\AP(S,n_1)=\{tn_{i_1}, n_2,\ldots,n_d\}\setminus\{n_{i_1}\}$, where $t$ is the maximal order of elements of the Ap\'ery set. In particular,  $S$ is homogeneous and so it is of homogeneous type if and only if the associated graded ring $G(S)$ is Cohen-Macaulay. This recovers \cite[Example~3.5]{RS} in the case $A$ is a numerical semigroup ring.


\end{rem}

It is proved in \cite[Proposition 2.5]{HS} that a numerical semigroup generated by an arithmetic sequence is of homogeneous type.
For some classes of  semigroups generated by  generalized arithmetic sequences, it is shown  in \cite[Corollary 4.12]{SZ}, that they are of homogeneous type. Now, we have:

\begin{cor}\label{hh}
Let $S$ be a numerical semigroup generated by a generalized arithmetic sequence. Then $S$ is of homogeneous type.
\end{cor}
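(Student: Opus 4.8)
The plan is to deduce this immediately from the machinery already assembled, reducing it to two verifications: that a numerical semigroup $S$ generated by a generalized arithmetic sequence is homogeneous, and that its tangent cone $G(S)$ is Cohen--Macaulay. Once both are in hand, Theorem~\ref{homogeneous type} delivers the conclusion that $S$ is of homogeneous type with no further work.

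First I would dispose of the homogeneity. This is exactly the content of Example~\ref{arithmetic}, where the generalized arithmetic sequence $n_0$, $n_i = h n_0 + i t$ is considered and it is shown directly that $\AP(S,n_0)$ is homogeneous: any two expressions of an element $s \in \AP(S,n_0)$ of lengths $l$ and $l'$ satisfy $lh = l'h + \alpha t$ and $r' = r + \alpha n_0$ for some integer $\alpha$, and since $r, r' < n_0$ one forces $\alpha = 0$ and hence $l = l'$. Thus $\mathcal{L}(s)$ is a singleton for every such $s$, and by Definition~\ref{D} the semigroup $S$ is homogeneous. I would simply cite Example~\ref{arithmetic} here rather than reproduce the computation.

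The second ingredient is the Cohen--Macaulayness of $G(S)$. By Proposition~\ref{CM} this is equivalent to $T(S) = \emptyset$, so the task is to check that no element of $S$ is torsion. Using the Ap\'ery description from Example~\ref{arithmetic}, together with the implicit order computation $\ord_S\bigl(h n_0 \lceil r/d \rceil + t r\bigr) = \lceil r/d \rceil$ recorded there, one can show that orders add correctly along addition of $n_0$: for any $s \in S$ and any $c > 0$ one has $\ord_S(s + c n_0) = \ord_S(s) + c$, so the defining condition for $T(S)$ is never met. Concretely, writing an arbitrary element in the normal form $l h n_0 + t r$ with $0 \le r < n_0$ (as the reduction argument in Example~\ref{arithmetic} permits), adding $n_0$ increases the length-controlling quantity by exactly one, which rules out the strict inequality $\ord(s + c n_1) > \ord(s) + c$. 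Hence $T(S) = \emptyset$ and $G(S)$ is Cohen--Macaulay.

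With $S$ homogeneous and $G(S)$ Cohen--Macaulay, Theorem~\ref{homogeneous type} applies verbatim and yields that $S$ is of homogeneous type. The main obstacle, such as it is, lies in the order/torsion computation of the second paragraph: one must argue carefully that the order function on $S$ is additive with respect to repeated addition of the multiplicity $n_0$, controlling how the ceiling term $\lceil r/d \rceil$ behaves when $r$ is reduced modulo $n_0$. Everything else is a direct appeal to results already proved in the excerpt.
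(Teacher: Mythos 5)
Your skeleton is exactly the paper's: homogeneity of $S$ is quoted from Example~\ref{arithmetic}, Cohen--Macaulayness of $G(S)$ is the second ingredient, and Theorem~\ref{homogeneous type} then gives that $S$ is of homogeneous type. The single point of divergence is how the Cohen--Macaulayness is obtained: the paper disposes of it in one line by citing \cite[Corollary 3.2]{SZ}, whereas you propose to prove $T(S)=\emptyset$ directly via Proposition~\ref{CM}, using the Ap\'ery and order data of Example~\ref{arithmetic}. That route is viable and has the merit of keeping the corollary self-contained, with a computation in the same spirit as the congruence argument already carried out in Example~\ref{arithmetic}; the paper's citation is shorter but imports an external result.

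As written, however, your Cohen--Macaulay step is a sketch rather than a proof: the assertion that ``adding $n_0$ increases the length-controlling quantity by exactly one'' \emph{is} the claim $\ord_S(s+cn_0)=\ord_S(s)+c$ that needs proving, and the normal form $lhn_0+tr$ with $0\le r<n_0$ is not available for an arbitrary factorization (a general factorization carries an extra $a_0n_0$ term and an unreduced $r$). The missing computation is the following. It suffices to treat $w=hn_0\lceil r/d\rceil+tr\in\AP(S,n_0)$ with $0\le r<n_0$, since an arbitrary $s\in S$ equals $w+kn_0$ for such a $w$ and the same computation yields $\ord_S(s)=\ord_S(w)+k$. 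Let $w+cn_0=b_0n_0+\sum_{i=1}^d b_in_i$ be any factorization and put $m=\sum_{i\ge1}b_i$, $r''=\sum_{i\ge1}ib_i\le dm$. Comparing the two expressions of $w+cn_0$ and using $\gcd(n_0,t)=1$ forces $r''=r+\alpha n_0$ with $\alpha\ge0$ and $b_0+mh=h\lceil r/d\rceil+c-t\alpha$, so the length of this factorization is
\[
b_0+m \;=\; h\lceil r/d\rceil+c-t\alpha-m(h-1)\;\le\;\lceil r/d\rceil+c,
\]
the inequality holding because $m\ge\lceil r''/d\rceil\ge\lceil r/d\rceil$, $h\ge1$ and $t\alpha\ge0$. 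Hence $\ord_S(w+cn_0)=\ord_S(w)+c$ for every $w\in\AP(S,n_0)$ and every $c>0$, i.e. $T(S)=\emptyset$, and Proposition~\ref{CM} gives the Cohen--Macaulayness. With this inserted your proof is complete; without it you have only reduced the corollary to an unproved claim equivalent to the result the paper cites from \cite{SZ}.
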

\begin{proof}
By \cite[Corollary 3.2]{SZ}, $G(S)$ is Cohen-Macaulay. Now, the result follows by Remark~\ref{arithmetic} and Theorem~\ref{homogeneous type}.
\end{proof}

The following example shows that the converse of Theorem~\ref{homogeneous type}, does not hold even in embedding dimension three.

\begin{ex}\label{example}
Let $S:=\langle15, 21, 28\rangle$. Then $I(S)=(x_2^4-x_3^3, x_1^7-x_2^5)$  is minimally generated by a standard basis of two elements. Hence $G(S)$ is complete intersection and so $S$ is of homogeneous type (cf. Remark~\ref{koszul}), but
it is not homogeneous, since $3\times28=4\times21=84\in\AP(S, 15)$.
\end{ex}

\section{Small embedding dimensions}

We first recall the following definition: a binomial $x_i^{c_i}-\prod_{j\neq i}x_j^{u_{ij}}\in I_S$ is called \emph{critical} with respect to $x_i$ if $c_i$ is the smallest integer such that $c_in_i\in\langle n_1,\ldots,\widehat{n_i},\ldots,n_d\rangle$.
The notation $c_i$ will be used frequently in the rest of the section.
The \emph{critical ideal} of $S$, denoted by $C_S$, is the ideal of $\kk[x_1,\ldots,x_d]$ generated by all  critical binomials of $I_S$ (cf.~\cite{KO}).

\medskip

Recall that a numerical semigroup with Frobenius number $F(S)$,  is called \emph{irreducible}, if
it cannot be written as the intersection of two numerical semigroups
properly containing it. Let $S$ be an  irreducible numerical
semigroup. Then $S$ is called \emph{symmetric} if $F(S)$ is  odd and
it is called \emph{pseudo symmetric} if $F(S)$ is even (cf.~\cite{GR}).

\medskip

The following classical result by J. Herzog describes the minimal systems of generators of $I_S$ for numerical semigroups $S$ with embedding dimension three:

\begin{thm}[\text{\cite[Section 3]{H1}}]\label{Herzog}
	Let $S$ be a numerical semigroup of embedding dimension three. Then the following statements hold for some non-negative integers $c_{ij}$, $1\leq i,j\leq 3$.
	\begin{enumerate}
		\item If $S$ is symmetric, then after a permutation $(i,j,k)$ of $(1,2,3)$,  we have
		\[I_S=(x_i^{c_i}-x_j^{c_j}, x_k^{c_k}-x_i^{c_{ki}}x_j^{c_{kj}}).\]
		\item If $S$ is not symmetric, then \[I_S=(x_1^{c_1}-x_2^{c_{12}}x_3^{c_{13}}, x_2^{c_2}-x_1^{c_{21}}x_3^{c_{23}},x_3^{c_3}-x_1^{c_{31}}x_2^{c_{32}}),\]
	where $c_i=c_{ji}+c_{ki}$ for all permutation $(i,j,k)$ of $(1,2,3)$ and $c_{ij}>0$ for all $1\leq i\neq j\leq 3$.
	\end{enumerate}
\end{thm}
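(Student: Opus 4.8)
The plan is to analyze $I_S$ through its \emph{critical binomials}. For each $i$ let $c_i$ be the least positive integer with $c_i n_i \in \langle n_j, n_k\rangle$, where $\{i,j,k\}=\{1,2,3\}$, and write $c_i n_i = c_{ij}n_j + c_{ik}n_k$ with $c_{ij},c_{ik}\geq 0$; this produces three critical binomials $f_i = x_i^{c_i} - x_j^{c_{ij}}x_k^{c_{ik}}\in I_S$. First I would record the minimality properties these exponents enjoy: by the choice of $c_i$, no proper power $x_i^{e}$ with $0<e<c_i$ lies in $\langle n_j,n_k\rangle$, and a short argument shows $c_{ij}<c_j$ whenever $c_{ij}\neq 0$ (otherwise one could substitute $f_j$ to shorten a representation of $c_i n_i$, contradicting minimality of $c_i$). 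These bookkeeping facts are what will later pin the exponents down exactly.

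The key lemma --- and the step I expect to be the main obstacle --- is that the three critical binomials already generate $I_S$. I would prove this by a reduction argument applied to an arbitrary binomial $x^\a-x^\b\in I_S$ with $\a,\b$ of disjoint support: whenever the exponent of some variable $x_i$ in $x^\a$ is at least $c_i$, one rewrites using $f_i$, which strictly decreases a suitable well-order on the monomials. Termination of this process, together with the fact that one can never arrive at a nonzero binomial supported on a single variable (since $I_S$ is prime and $\gcd(n_1,n_2,n_3)=1$), forces the remainder to vanish, so $x^\a-x^\b$ lies in $(f_1,f_2,f_3)$. This simultaneously yields the bound $\mu(I_S)\leq 3$ on the minimal number of generators.

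Next I would invoke the structural dichotomy driven by symmetry. Since $\kk[S]=P/I_S$ is a one--dimensional domain it is Cohen--Macaulay of codimension two, so by Hilbert--Burch its minimal free resolution has the form $0\to P^{\mu-1}\xrightarrow{M}P^{\mu}\to P\to\kk[S]\to 0$, with $I_S$ equal to the ideal of maximal minors of $M$. By Kunz's theorem $S$ is symmetric if and only if $\kk[S]$ is Gorenstein, which for a codimension--two Cohen--Macaulay quotient is equivalent to the resolution being self--dual, hence to $\mu=2$; thus $S$ is symmetric exactly when $I_S$ is a complete intersection (this is where codimension two is essential, as the implication fails in higher embedding dimension). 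In the symmetric case $\mu=2$, so the three critical binomials cannot form a minimal system; tracking the redundancy forces some tail exponent to vanish, say $c_{ij}=0$, whereupon $f_i$ degenerates to a two--variable relation $x_i^{c_i}-x_k^{c_{ik}}$ with $c_{ik}=c_k$, and the remaining generator assembles into the stated form $(x_i^{c_i}-x_j^{c_j},\,x_k^{c_k}-x_i^{c_{ki}}x_j^{c_{kj}})$ after permuting the variables.

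Finally, in the non--symmetric case $\mu=3$, so $f_1,f_2,f_3$ form a minimal generating set and $M$ is a $3\times2$ matrix whose three maximal minors are $f_1,f_2,f_3$. Matching the minors against the displayed binomials forces $M$, up to signs, to be the matrix with columns $(x_1^{c_{31}},x_2^{c_{12}},x_3^{c_{23}})^{\mathsf T}$ and $(x_3^{c_{13}},x_1^{c_{21}},x_2^{c_{32}})^{\mathsf T}$; expanding its maximal minors returns $f_1,f_2,f_3$ and, by equating exponents on each variable, gives precisely the additive relations $c_i=c_{ji}+c_{ki}$ for every permutation $(i,j,k)$. Positivity $c_{ij}>0$ then follows because a vanishing tail exponent would turn one critical binomial into a pure difference of powers, returning us to the complete--intersection (hence symmetric) situation and contradicting $\mu=3$. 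The delicate points throughout are the termination of the reduction in the key lemma and the explicit identification of the Hilbert--Burch matrix; once these are secured, the exponent identities and the positivity are formal.
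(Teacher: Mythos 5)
The paper itself offers no proof of this statement: it is quoted verbatim from Herzog [H1, Section 3], where it is established by a direct case analysis of representations inside the semigroup. Measured against that, your plan (critical binomials, then Hilbert--Burch plus Kunz) is a legitimately different route, but it has a genuine gap at its central step, namely your ``key lemma'' that the three critical binomials generate $I_S$ --- which is precisely the hard content of Herzog's theorem. Your termination claim is false for the orientation your argument needs. To run your endgame (a reduced remainder with one side a pure power $x_i^{b}$, $0<b<c_i$, contradicting minimality of $c_i$) you must rewrite in the direction $x_i^{c_i}\to x_j^{c_{ij}}x_k^{c_{ik}}$ for \emph{all three} $i$. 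But this rewriting system can cycle, so no well-order strictly decreases along it. Concretely, for $S=\langle 3,4,5\rangle$ one has $f_1=x_1^{3}-x_2x_3$, $f_2=x_2^{2}-x_1x_3$, $f_3=x_3^{2}-x_1^{2}x_2$, and
\[
x_1^{3}x_2^{2}x_3^{2}\;\to\;x_2^{3}x_3^{3}\;\to\;x_1x_2x_3^{4}\;\to\;x_1^{3}x_2^{2}x_3^{2},
\]
a cycle of length three; in general, in the non-symmetric case the identities $c_i=c_{ji}+c_{ki}$ force the product of the three left-hand sides to equal the product of the three right-hand sides, so no monomial order can orient all three rules with the pure power as leading term. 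If instead you orient the $f_i$ by an honest monomial order (so that the division algorithm does terminate), then for at least one $i$ the leading monomial of $f_i$ is the mixed tail, and your concluding contradiction evaporates: a reduced nonzero remainder of the form $x_i^{b}-x_j^{u}x_k^{v}$ with $b\geq c_i$ is then perfectly possible. So the lemma is not proved by your sketch; Herzog's case analysis (or an alternative such as a length count of $P/(f_1,f_2,f_3,x_1)$ against the Ap\'ery set) is needed exactly here.

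Two further steps are asserted rather than proved. In the non-symmetric case you claim the Hilbert--Burch matrix is ``forced'' to have the displayed monomial entries: but the entries of a graded syzygy matrix are only $S$-homogeneous, and the $S$-graded components of $\kk[x_1,x_2,x_3]$ are not one-dimensional, so homogeneity does not force monomiality; nor does the fact that each $2\times 2$ minor is a binomial, since cross terms may cancel. Likewise, positivity of the $c_{ij}$ rests on the implication ``some $c_{ij}=0$ implies complete intersection'', which again invokes the key lemma with a suitably chosen tail. Note that once the key lemma and the bookkeeping bounds $c_{kj}<c_j$, $c_{jk}<c_k$ are secured, you do not need Hilbert--Burch at all: adding the critical relations for $j$ and $k$ gives $(c_j-c_{kj})n_j+(c_k-c_{jk})n_k=(c_{ji}+c_{ki})n_i$, whence $c_{ji}+c_{ki}\geq c_i$ by minimality of $c_i$; summing the three critical relations gives $\sum_i c_in_i=\sum_i(c_{ji}+c_{ki})n_i$, and the three inequalities then collapse to the equalities $c_i=c_{ji}+c_{ki}$. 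This arithmetic shortcut is much closer to Herzog's own argument, and replacing your Hilbert--Burch step by it would leave the key lemma as the only (but serious) missing piece.
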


\begin{thm}\label{dim3.h}
Let $S$ be a numerical semigroup of embedding dimension three.
\begin{enumerate}
		\item Let $S$ be symmetric and $(i,j,k)$ be the permutation of $(1,2,3)$ such that the statement (1) of Theorem~\ref{Herzog} holds. Then
	$\AP(S,n_i)$ and $\AP(S,n_j)$ are homogeneous sets, and $\AP(S,n_k)$ is not homogeneous.
	\item If $S$ is not symmetric, then $\AP(S,n_i)$ is homogeneous for all $i=1,2,3$.
	\end{enumerate}
\end{thm}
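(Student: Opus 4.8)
The plan is to use the explicit description of the generators of $I_S$ given by Theorem~\ref{Herzog} together with the characterization of homogeneity via Corollary~\ref{homog}: the set $\AP(S,n_i)$ is homogeneous if and only if there exists a minimal set of generators $E$ for $I_S$ such that $x_i$ belongs to the support of every non-homogeneous element of $E$. Since Herzog's theorem provides an explicit minimal generating set in each case, the whole argument reduces to inspecting which variables appear in the support of each generator and checking which generators are homogeneous (i.e. balanced in total degree).

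For the symmetric case~(1), I would take the minimal generators $g_1 = x_i^{c_i}-x_j^{c_j}$ and $g_2 = x_k^{c_k}-x_i^{c_{ki}}x_j^{c_{kj}}$. The variable $x_k$ appears only in $g_2$, and $\supp(g_1)=\{x_i,x_j\}$; thus $x_i$ lies in the support of both generators, and likewise $x_j$ lies in the support of both. If $g_2$ is homogeneous we are in a degenerate situation; otherwise the only possible non-homogeneous generator is $g_2$, whose support contains both $x_i$ and $x_j$, so by Corollary~\ref{homog} both $\AP(S,n_i)$ and $\AP(S,n_j)$ are homogeneous. For the final assertion that $\AP(S,n_k)$ is \emph{not} homogeneous, I expect to argue directly: the generator $g_2$ relates $x_k^{c_k}$ (support not containing$\ldots$ wait, it does not contain $x_k$ in the second term) to a monomial in $x_i,x_j$ not involving $x_k$, so $x_k\notin\supp$ of the second term of $g_2$; I would then show that $g_2$ must be non-homogeneous (using that in embedding dimension three a symmetric semigroup is not a complete intersection with all relations homogeneous unless $S$ is very special) and exhibit the element $s(\a)\in\AP(S,n_k)$ with two factorizations of different length coming from $g_2$, so that the length set is not a singleton. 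This is the step where I must be careful, since it requires genuinely producing an element of $\AP(S,n_k)$ with non-unique length rather than merely failing to find a good generating set.

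For the non-symmetric case~(2), the minimal generators are $g_\ell = x_\ell^{c_\ell}-x_p^{c_{\ell p}}x_q^{c_{\ell q}}$ for each permutation $(\ell,p,q)$ of $(1,2,3)$, and crucially $c_{ij}>0$ for all $i\neq j$. This positivity means that every variable $x_i$ appears in the second term of the two generators $g_p$ and $g_q$ indexed by the other two letters, and appears as the leading monomial $x_i^{c_i}$ of $g_i$ itself; hence $x_i\in\supp(g_\ell)$ for \emph{every} $\ell$. Therefore, for each fixed $i$, the variable $x_i$ lies in the support of all three generators, so it certainly lies in the support of all non-homogeneous ones, and Corollary~\ref{homog} immediately yields that $\AP(S,n_i)$ is homogeneous for every $i=1,2,3$.

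The main obstacle I anticipate is the negative assertion in part~(1), namely proving that $\AP(S,n_k)$ is genuinely \emph{not} homogeneous. The positive statements follow mechanically from support inspection via Corollary~\ref{homog}, but the negative statement cannot be obtained by exhibiting a single convenient generating set; it requires showing that \emph{no} minimal generating set places $x_k$ in the support of all non-homogeneous generators, equivalently that some element of $\AP(S,n_k)$ has factorizations of distinct total degree. The cleanest route is to verify that the relation $g_2$ forces $c_k n_k = c_{ki}n_i + c_{kj}n_j$ with $c_k \neq c_{ki}+c_{kj}$ (the non-homogeneity of $g_2$), locate the common value $s=c_kn_k$ (or a suitable translate) inside $\AP(S,n_k)$, and conclude that $\mathcal{L}(s)$ contains two distinct lengths, contradicting Definition~\ref{D}.
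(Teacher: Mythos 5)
Your treatment of the positive assertions is essentially the paper's argument: inspect the supports of Herzog's generators and apply Corollary~\ref{homog} (for the non-symmetric case the paper invokes Example~\ref{generic}, which is the same full-support argument). But your plan for the negative assertion in part~(1) --- that $\AP(S,n_k)$ is \emph{not} homogeneous --- has a genuine gap, and in fact rests on two false claims. First, $g_2=x_k^{c_k}-x_i^{c_{ki}}x_j^{c_{kj}}$ need not be non-homogeneous: for $S=\langle 4,5,6\rangle$, which is symmetric with $(i,j,k)=(1,3,2)$, one has $I_S=(x_1^3-x_3^2,\ x_2^2-x_1x_3)$, so $g_2=x_2^2-x_1x_3$ is homogeneous, and yet $\AP(S,5)=\{0,4,6,8,12\}$ is not a homogeneous set because $12=3\cdot 4=2\cdot 6$ lies in it. (Your parenthetical justification is also backwards: by Herzog's theorem, \emph{every} symmetric numerical semigroup of embedding dimension three is a complete intersection, so there is no ``very special'' case to exclude.) Second, the element $s=c_kn_k$ that you propose to locate inside $\AP(S,n_k)$ can never lie there: $c_kn_k-n_k=(c_k-1)n_k\in S$, so $c_kn_k\notin\AP(S,n_k)$ by the very definition of the Ap\'ery set. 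Thus neither the relation you single out nor the element you single out can witness non-homogeneity; also note that, contrary to what you write, $g_1$ --- not $g_2$ --- is the generator that is forced to be non-homogeneous.

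The witness must come from $g_1$. Since $c_in_i=c_jn_j$ and $n_i\neq n_j$, necessarily $c_i\neq c_j$, so $g_1=x_i^{c_i}-x_j^{c_j}$ is always non-homogeneous and the element $w:=c_in_i=c_jn_j$ has two factorizations of distinct lengths; the whole point is to show $w\in\AP(S,n_k)$. This is the content of the paper's proof, by contradiction: if $w\notin\AP(S,n_k)$, write $w=r_in_i+r_jn_j+r_kn_k$ with $r_k\geq 1$. Minimality of $c_i$ (the least exponent with $c_in_i\in\langle n_j,n_k\rangle$) forces $r_i=0$, and minimality of $c_j$ forces $r_j=0$; hence $r_kn_k=w\in\langle n_i,n_j\rangle$, so $r_k\geq c_k$, say $r_k=c_k+s_k$. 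Substituting $c_kn_k=c_{ki}n_i+c_{kj}n_j$ gives $w=c_{ki}n_i+c_{kj}n_j+s_kn_k$, and the same minimality argument applied once more yields $c_{ki}=c_{kj}=0$, which is absurd since then $c_kn_k=0$. Without this (or an equivalent) argument, the negative half of part~(1) --- which you yourself identified as the main obstacle --- remains unproved.
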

\begin{proof}
	(1): From Theorem~~\ref{Herzog}, we have  $I_S=(x_i^{c_i}-x_j^{c_j}, x_k^{c_k}-x_i^{c_{ki}}x_j^{c_{kj}})$. As $x_i, x_j$ belong to the support of all generators of $I_S$, $\AP(S,n_i)$ and $\AP(S,n_j)$ are homogeneous by Corollary~\ref{homog}. Note that $c_in_i=c_jn_j$. Hence $x_i^{c_i}-x_j^{c_j}$ is not a homogeneous binomial.  If $\AP(S,n_k)$ is homogeneous, then $c_in_i=c_jn_j$ is not in $\AP(S,n_k)$. Therefore
	\[c_in_i=c_jn_j=r_in_i+r_jn_j+r_kn_k,\] for some non-negative integers $r_i,r_j$ and $r_k>0$. According to the definition of $c_i$ and $c_j$, we get $r_i=r_j=0$ and so  $r_kn_k\in \langle n_i,n_j\rangle$. Let $r_k=c_k+s_k$. Then
	\[c_in_i=c_jn_j=r_kn_k=c_{ki}n_i+c_{kj}n_j+s_kn_k,\] which implies that $c_{ki}=c_{kj}=0$, a contradiction.
	
	\vspace{2mm}
	(2): By Theorem~~\ref{Herzog}, $x_i,x_j,x_k$ belong to the support of all generators of $I_S$ and the result follows from Examples~\ref{generic}.
\end{proof}

\begin{rem}
By (2) in the above theorem, we have that if $S$ is not symmetric, $S$ is always homogeneous and so $S$ is of homogeneous type if and only if $G(S)$ is Cohen-Macaulay. For instance, this is the case for $S=\langle 3,5,7 \rangle$, see \cite[Table 3.4]{RV}.
\end{rem}

\begin{rem}
In the symmetric case, $S$ is not necessarily homogeneous neither of homogeneous type. For instance, this is what happens for $S=\langle 7,8,20 \rangle$, see \cite[Table 3.4]{RV}.
\end{rem}

We have seen two different classes of  numerical semigroups of homogeneous type:
homogeneous numerical semigroups which are not symmetric with Cohen-Macaulay tangent cones   (cf.~Theorem~\ref{homogeneous type}) and numerical semigroups with complete intersection tangent cones (cf.~Remark~\ref{ci}).
Our next result  shows that in embedding dimension three, these two (different) classes determine all numerical semigroups of homogeneous type.

\begin{thm}\label{dim3}
Let $S$ be a numerical semigroup with embedding dimension three. Then the following statements are equivalent.
\begin{enumerate}
\item $S$ is of  homogeneous type.
\item $\beta_1(R)=\beta_1(G(S))$.
\item $G(S)$ is Cohen-Macaulay, and either $S$ is homogeneous or $(I_S)_*$ is generated by pure powers of $x_2$ and $x_3$.
\item Either  $S$ is non-symmetric homogeneous with Cohen-Macaulay tangent cone, or $G(S)$ is complete intersection.
\end{enumerate}
\end{thm}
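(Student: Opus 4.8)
The plan is to prove the cycle of implications $(1)\Rightarrow(2)\Rightarrow(3)\Rightarrow(4)\Rightarrow(1)$, exploiting Herzog's structure theorem (Theorem~\ref{Herzog}) to split the analysis into the symmetric and non-symmetric cases at the right moments. The implication $(1)\Rightarrow(2)$ is trivial from the definition of homogeneous type. The implication $(4)\Rightarrow(1)$ is essentially already available: if $S$ is homogeneous with Cohen--Macaulay tangent cone then $S$ is of homogeneous type by Theorem~\ref{homogeneous type}, and if $G(S)$ is complete intersection then $S$ is of homogeneous type by Remark~\ref{koszul}. So the real content lies in $(2)\Rightarrow(3)$ and $(3)\Rightarrow(4)$.

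For $(2)\Rightarrow(3)$, I would first argue that $\beta_1(S)=\beta_1(G(S))$ forces $G(S)$ to be Cohen--Macaulay. The key is that $\beta_1$ counts minimal generators: by Robbiano's bound we always have $\beta_1(S)\le\beta_1(G(S))$, and equality of the number of minimal generators of $I_S$ and of $(I_S)_*$ should force the initial forms of a minimal generating set of $I_S$ to already minimally generate $(I_S)_*$, i.e.\ that minimal set is a standard basis; by Proposition~\ref{CM} / Proposition~\ref{ds} this is equivalent to Cohen--Macaulayness of the tangent cone. Then I split by Herzog's classification. In the non-symmetric case, Theorem~\ref{dim3.h}(2) says $S$ is automatically homogeneous, so the first alternative of (3) holds. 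In the symmetric case with $I_S=(x_i^{c_i}-x_j^{c_j},\,x_k^{c_k}-x_i^{c_{ki}}x_j^{c_{kj}})$, I would examine which of the two generators can fail to be homogeneous. If the tangent cone is Cohen--Macaulay and $S$ is \emph{not} homogeneous, I want to show the initial forms must be pure powers: the relation $c_in_i=c_jn_j$ makes $x_i^{c_i}-x_j^{c_j}$ non-homogeneous, and Cohen--Macaulayness (via Theorem~\ref{**}) dictates that $x_1$ cannot divide a leading monomial, which combined with the specific shape of the two generators pins down $(I_S)_*$ as generated by pure powers of the two variables other than the multiplicity variable.

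For $(3)\Rightarrow(4)$, I assume $G(S)$ is Cohen--Macaulay and argue by the two sub-alternatives. If $S$ is homogeneous, the first alternative of (4) holds immediately. If instead $(I_S)_*$ is generated by pure powers of $x_2$ and $x_3$, I want to conclude $G(S)$ is a complete intersection: two pure powers in two (essentially independent) variables form a regular sequence, so $(I_S)_*$ is generated by a regular sequence and hence $G(S)=P/(I_S)_*$ is a complete intersection. Here I must be careful to confirm that (3)'s second clause really does describe $(I_S)_*$ with exactly the two pure-power generators coming from the symmetric case, rather than leaving room for extra generators; the Cohen--Macaulay hypothesis together with $\beta_1(S)=\beta_1(G(S))=2$ in the symmetric embedding-dimension-three setting should close this gap.

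The main obstacle I expect is the symmetric case of $(2)\Rightarrow(3)$: carefully extracting, from the single numerical equality $\beta_1(S)=\beta_1(G(S))$, both the Cohen--Macaulayness of $G(S)$ and the dichotomy ``homogeneous or pure-power initial forms.'' The delicate point is tracking how the non-homogeneous generator $x_i^{c_i}-x_j^{c_j}$ degenerates under taking initial forms and ruling out mixed monomials in $(I_S)_*$, which is exactly where the interaction between the order function $\ord_S$, the defining equalities $c_in_i=c_jn_j$, and the Cohen--Macaulay criterion of Proposition~\ref{ds} must be handled with precision.
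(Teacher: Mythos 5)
Your skeleton coincides with the paper's: the same cycle of implications, with $(1)\Rightarrow(2)$ trivial, $(4)\Rightarrow(1)$ from Theorem~\ref{homogeneous type} and Remark~\ref{koszul}, $(3)\Rightarrow(4)$ immediate (two pure powers of distinct variables form a regular sequence of height two, so the extra care you worry about is not needed there), and the non-symmetric half of $(2)\Rightarrow(3)$ from Theorem~\ref{dim3.h}(2). But both substantive steps of $(2)\Rightarrow(3)$ have genuine gaps. For the Cohen--Macaulayness of $G(S)$: from $\beta_1(S)=\beta_1(G(S))$ you do get a minimal generating set of $I_S$ that is a standard basis, but this is \emph{not} ``equivalent to Cohen--Macaulayness by Proposition~\ref{CM} / Proposition~\ref{ds}''; those results characterize Cohen--Macaulayness by the absence of torsion elements, respectively by $x_1$ not dividing the leading monomials of a $<_{ds}$-Gr\"obner basis, and say nothing about a standard basis minimally generating $I_S$. (Compare Theorem~\ref{**}(4): there the extra condition $a_1\neq 0$ on the non-homogeneous elements is exactly what is needed, and exactly what you do not have.) In fact, by Robbiano's inequality, ``some minimal generating set is a standard basis'' is merely a restatement of hypothesis (2), so at this point you have derived nothing. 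The paper obtains Cohen--Macaulayness from the external result \cite[Proposition 3.3]{RV}: in embedding dimension three, $\beta_1(G(S))\leq 3$ forces $G(S)$ Cohen--Macaulay. Only the symmetric case can be settled bare-handed (there $\beta_1(G(S))=2$ equals the height of $(I_S)_*$, so $G(S)$ is a complete intersection); in the non-symmetric case, where $\beta_1(G(S))=3$, some such input is unavoidable.

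Second, in the symmetric non-homogeneous case, your plan ``Cohen--Macaulayness (via Theorem~\ref{**}) plus the shape of the two generators pins down pure powers'' cannot work. Theorem~\ref{**} is not applicable, since it characterizes the conjunction ``homogeneous and Cohen--Macaulay'' and $S$ is not homogeneous here; more importantly, the two Herzog generators $f_1=x_2^{c_2}-x_3^{c_3}$ and $f_2=x_1^{c_1}-x_2^{c_{12}}x_3^{c_{13}}$ are in general not a standard basis, so their initial forms do not determine $(I_S)_*$. The paper's proof rests on Shen's computation \cite[Corollary 3.2]{Sh}: $\{f_1,f_2,f_3\}$ with $f_3=x_2^{c_2+c_{12}}-x_1^{c_1}x_3^{c_3-c_{13}}$ is a standard basis, hence $(I_S)_*=(x_3^{c_3},x_2^{c_{12}}x_3^{c_{13}},(f_3)_*)$, and then the numerical hypothesis $\beta_1(G(S))=2$ --- not Cohen--Macaulayness --- forces $(f_3)_*\in(x_3^{c_3},x_2^{c_{12}}x_3^{c_{13}})$, which yields $c_{13}=0$. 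You postpone the use of $\beta_1(S)=\beta_1(G(S))=2$ to the implication $(3)\Rightarrow(4)$, where it is not among the hypotheses; it must be used here. Without it the dichotomy is simply false: $S=\langle 7,10,15\rangle$ is symmetric (a complete intersection, $I_S=(x_2^3-x_3^2,\,x_1^5-x_2^2x_3)$), it is not homogeneous (since $30=3\cdot 10=2\cdot 15\in\AP(S,7)$), and its tangent cone is Cohen--Macaulay, yet $(I_S)_*=(x_3^2,\,x_2^2x_3,\,x_2^5)$ is not generated by pure powers --- consistently, $\beta_1(G(S))=3>2=\beta_1(S)$.
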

\begin{proof}
(1)$\Rightarrow$(2) is clear.

\vspace{2mm}
(2)$\Rightarrow$(3):
By Theorem~\ref{Herzog}, $\beta_1(R)\leq 3$. Therefore $\beta_1(G(S))\leq 3$ and so $G(S)$ is Cohen-Macaulay from \cite[Proposition 3.3]{RV}.

Assume that $S$ is not homogeneous. Hence,  Theorem~\ref{dim3.h} implies that $S$  is symmetric  and $k=1$ in the statements of Theorem~\ref{Herzog}. Therefore
\[I_S=(f_1:=x_2^{c_2}-x_3^{c_3}, f_2:=x_1^{c_1}-x_2^{c_{12}}x_3^{c_{13}}).\]
From \cite[Corollary 3.2]{Sh}, it follows that
$\{f_1,f_2,f_3:=x_2^{c_2+c_{12}}-x_1^{c_1}x_3^{c_3-c_{13}}\}$ is a standard basis for $I_S$,
$(I_S)_*=(x_3^{c_3}, x_2^{c_{12}}x_3^{c_{13}}, (f_3)_*)$ and we may assume that $c_{13}<c_3$. Since $c_{13}<c_3$, we can not remove neither  $x_3^{c_3}$ nor $x_2^{c_{12}}x_3^{c_{13}}$
from the  set of generators of $(I_S)_*$. On the other hand $\beta_1(R)=\beta_1(G(S))=2$. Therefore $(f_3)_*\in(x_3^{c_3}, x_2^{c_{12}}x_3^{c_{13}})$.
Note that
\[(f_3)_*=\left\{\begin{array}{ll}
f_3 & \mbox{ if } c_2+c_{12}=c_1+c_3-c_{13}\\
x_2^{c_2+c_{12}} & \mbox{ otherwise }\end{array}\right.\]
Since $(x_3^{c_3}, x_2^{c_{12}}x_3^{c_{13}})$ is a monomial ideal, if $(f_3)_*=f_3$, then each monomial term of $f_3$ belongs to this ideal.
Hence $x_2^{c_2+c_{12}}\in(x_3^{c_3}, x_2^{c_{12}}x_3^{c_{13}})$ and consequently $c_{13}=0$, i.e. $(I_S)_*=(x_3^{c_3}, x_2^{c_{12}})$.

\vspace{2mm}
(3)$\Rightarrow$(4): It is clear.

\vspace{2mm}
(4)$\Rightarrow$(1): It follows by Theorem~\ref{homogeneous type} and Remark~\ref{koszul}.
\end{proof}

Now, we look at numerical semigroups $S$ with embedding dimension four. We start by observing that, as in the case of embedding dimension $3$, $S$ is not necessarily homogeneous neither of homogeneous type. The following example is taken from \cite[Remark 3.10]{DMS}: let $S=\langle 16, 18, 21, 27 \rangle$. Then, $S$ is a complete intersection and $G(S)$ is Gorenstein but $G(S)$ is not a complete intersection, hence the minimal number of generators of the corresponding defining ideals are different and so $S$ is not of homogeneous type. Since $G(S)$ is Cohen-Macaulay, $S$ cannot be homogeneous neither. In fact, $81 \in \AP(S,16)$ and $81 = 3 \times 27 = 3 \times 18 + 27$ are two expressions with different size.


Let $S$ with embedding dimension four. Let $C_S$ be the critical ideal of $S$ as defined at the beginning of this section. By a result of A. Katsabekis and I. Ojeda, one can find a minimal system of generators of $I_S$ with the following special property:

\begin{prop}[\text{\cite[Proposition 3.9]{KO}}]\label{4g}
	Let $S$ be a numerical semigroup of embedding dimension four. Then there exists a minimal system of generators $E=E_1\cup E_2$ of $I_S$, where $E_1$ is minimal set of generators of $C_S$  and $E_2$ is a set of binomials with full support.
\end{prop}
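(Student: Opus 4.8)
The plan is to work with the $S$-grading on $P=\kk[x_1,\dots,x_d]$, under which each binomial $x^\a-x^\b\in I_S$ is homogeneous of $S$-degree $s(\a)=s(\b)$. In this grading the number of minimal generators of a binomial ideal in each degree is well defined, so the assertion is that the minimal generators can be chosen so that those which are not of full support minimally generate the critical ideal $C_S$. Since $I_S$ is prime I take every generator $x^\a-x^\b$ with $\supp(x^\a)\cap\supp(x^\b)=\emptyset$. The heart of the matter is the ideal equality $I_S=C_S+F$, where $F$ denotes the ideal generated by the full-support binomials of $I_S$; granting this, a graded Nakayama argument builds a minimal generating set $E$ of $I_S$ by extending a minimal generating set $E_1$ of $C_S$ with full-support binomials $E_2$, provided one checks that the critical generators are not made redundant in $I_S$.

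To prove $I_S=C_S+F$ I would show that every $S$-homogeneous binomial $f=x^\a-x^\b$ that is not of full support lies in $C_S+F$. Such an $f$ omits some variable $x_l$ and is therefore a relation among $\{n_i : i\neq l\}$, i.e. an element of the defining ideal of the sub-semigroup $\langle n_i : i\neq l\rangle$. Because $\n$ minimally generates $S$, no $n_i$ belongs to the monoid generated by the remaining generators, so this sub-semigroup has embedding dimension three, and Herzog's classification (Theorem~\ref{Herzog}) applies: in both the symmetric and the non-symmetric case every minimal generator of an embedding-dimension-three defining ideal carries a pure power as one of its two terms. Writing $f$ as a combination of these Herzog generators and using that $C_S+F$ is an ideal, it suffices to treat a single such generator $f=x_i^{a}-m$, where $m$ is a monomial not involving $x_i$.

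Now I compare $a$ with the critical exponent $c_i$. The membership $x_i^{a}-m\in I_S$ forces $a n_i\in\langle n_j : j\neq i\rangle$, so by minimality of $c_i$ we have $a\geq c_i$; if $a=c_i$ then $f$ is itself a critical binomial and lies in $C_S$. If $a>c_i$, I fix a critical binomial $B_i=x_i^{c_i}-m_i$ and split, in the spirit of Lemma~\ref{key},
\[
x_i^{a}-m=x_i^{\,a-c_i}B_i+\bigl(x_i^{\,a-c_i}m_i-m\bigr).
\]
The remainder $g:=x_i^{\,a-c_i}m_i-m$ again lies in $I_S$ and has strictly smaller $x_i$-degree than $f$; crucially, $m_i$ may involve the omitted variable $x_l$, so $g$ can already attain full support and land in $F$. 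Feeding $g$ back into the procedure and inducting, one obtains $f\in C_S+F$.

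The step I expect to be the main obstacle is precisely this reduction: a single split lowers the $x_i$-degree, but the remainder may omit a different variable and has to be re-processed through Herzog's theorem for another sub-semigroup, so the induction must be organized around a genuinely well-founded measure---for instance the leading monomial with respect to $<_{ds}$, chosen so that each split strictly decreases it---to rule out cycling. A second delicate point is the minimality bookkeeping in the final step, namely verifying that a minimal generating set of $C_S$ extends to a minimal generating set of $I_S$ using only full-support binomials, i.e. that no critical generator becomes redundant in $I_S$. Finally, it should be emphasized that the argument is special to embedding dimension four: it is exactly the removal of a single variable, leaving an embedding-dimension-three sub-semigroup, that lets Herzog's theorem furnish the pure-power term, and this mechanism has no counterpart in higher embedding dimension.
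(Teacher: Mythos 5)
You should first be aware that the paper offers no proof of this proposition: it is quoted verbatim from Katsabekis--Ojeda \cite[Proposition 3.9]{KO}, so your argument has to stand entirely on its own, and in \cite{KO} the statement is the culmination of a sequence of preparatory lemmas on critical binomials. Your skeleton is reasonable and several steps are correct: the reduction to the ideal identity $I_S=C_S+F$ ($F$ the ideal generated by the full-support binomials of $I_S$), the observation that a binomial missing $x_l$ lies in the extension of the defining ideal of the three-generated subsemigroup $\langle n_i : i\neq l\rangle$ (which, after dividing by the gcd of its generators, is a numerical semigroup of embedding dimension three, so Theorem~\ref{Herzog} applies and every generator has a pure-power term), the inequality $a\geq c_i$, the case $a=c_i$, and the algebra of the split $x_i^{a}-m=x_i^{a-c_i}B_i+\bigl(x_i^{a-c_i}m_i-m\bigr)$, which is indeed in the spirit of Lemma~\ref{key}. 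But the two points you yourself label as obstacles are not technicalities to be checked later; they are precisely the mathematical content of the cited result, and neither is resolved in your proposal.

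Concretely: (a) the split keeps the $S$-degree $an_i$ fixed, and the remainder $g=x_i^{a-c_i}m_i-m$ in general has no pure-power term; when $g$ again misses a variable $x_{l'}$ (necessarily $l'\neq i$), you must rewrite it through the Herzog generators of a possibly different three-generated subsemigroup, and the degree-$an_i$ generators appearing there may in turn require the generator you started from, so cycling among the finitely many Herzog generators of a fixed $S$-degree is not excluded. Your proposed remedy---a monomial order under which each split strictly decreases the leading monomial---does not work as stated: the split only alters the pure-power term $x_i^{a}$, so whenever the other term $m$ is the $<_{ds}$-leading monomial (which happens as soon as $\deg m<a$, a common configuration), the leading monomial of $g$ equals that of $h$ and the measure stalls; one cannot in general orient all critical binomials so that their pure powers are leading terms. (b) Even granting $I_S=C_S+F$, the proposition asserts that an \emph{entire} minimal generating set $E_1$ of $C_S$ survives inside a minimal generating set of $I_S$; by graded Nakayama this amounts to the nonformal statement $C_S\cap \fm I_S=\fm C_S$, i.e.\ injectivity of $C_S/\fm C_S\to I_S/\fm I_S$, and this does not follow from the ideal identity---it is exactly the indispensability-type analysis of \cite{KO}, where degenerate configurations such as $c_in_i=c_jn_j$ force a careful choice of $E_1$. (A smaller inaccuracy: the decomposition $E=E_1\cup E_2$ does not say that the non-full-support generators minimally generate $C_S$; critical binomials may themselves have full support, so $E_1$ is not characterized by support.) Until (a) and (b) are settled, what you have is a plausible plan, not a proof.
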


\begin{thm}\label{ci}
Let $S$ be a numerical semigroup with embedding dimension four. Then
the following statements are equivalent for any $i=1,\ldots,4$.
\begin{enumerate}
	\item $\AP(S,n_i)$ is homogeneous.
	\item $\{c_jn_j \mid j\neq i\}\cap\AP(S,n_i)$ is a homogeneous set.
\end{enumerate}
\end{thm}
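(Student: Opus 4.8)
The plan is to deduce this equivalence from the general characterization of homogeneous Apéry sets already established in Proposition~\ref{homog-prop}, together with the special generating set provided by Proposition~\ref{4g}. The implication $(1)\Rightarrow(2)$ is immediate and requires no new ideas: if $\AP(S,n_i)$ is homogeneous, then by Definition~\ref{D} every nonzero element of $\AP(S,n_i)$ has a singleton length set, and since $\{c_jn_j\mid j\neq i\}\cap\AP(S,n_i)$ is a subset of $\AP(S,n_i)$, it is automatically homogeneous as well. So the entire content of the theorem lies in the reverse implication $(2)\Rightarrow(1)$.

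For $(2)\Rightarrow(1)$, I would fix $i$ and start from the minimal system of generators $E=E_1\cup E_2$ afforded by Proposition~\ref{4g}, where $E_1$ minimally generates the critical ideal $C_S$ and $E_2$ consists of binomials with full support. The variable $x_i$ already lies in the support of every element of $E_2$, so by the criterion of Corollary~\ref{homog} it suffices to show that $x_i$ can be arranged to lie in the support of every \emph{non-homogeneous} element of $E_1$ as well; equivalently, that any non-homogeneous critical binomial not involving $x_i$ can be replaced, using hypothesis~(2), by binomials each containing $x_i$ in their support. The critical binomial attached to a variable $x_j$ with $j\neq i$ has the form $x_j^{c_j}-\prod_{k\neq j}x_k^{u_{jk}}$, and the only way $x_i$ fails to appear in its support is when the right-hand monomial is supported away from $x_i$, i.e.\ when $c_jn_j$ has an expression inside $\langle n_k : k\neq i,j\rangle$. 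The point is that $c_jn_j$ is precisely one of the elements named in hypothesis~(2): either $c_jn_j\notin\AP(S,n_i)$, in which case $c_jn_j-n_i\in S$ and the splitting trick of Lemma~\ref{key} lets me rewrite the critical binomial through a factorization divisible by $x_i$; or $c_jn_j\in\AP(S,n_i)$, in which case hypothesis~(2) guarantees $c_jn_j$ has a singleton length set, forcing the critical binomial to be homogeneous and hence harmless.

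Concretely, I would run the rewriting procedure of Lemma~\ref{key} (and its refinement in the proof of Proposition~\ref{homog-prop}) with respect to the factorization $\cc=\e_i$ of $n_i$: for each non-homogeneous generator whose leading monomial $x^\a$ satisfies $s(\a)\notin\AP(S,n_i)$, replace it by the pair $x^\a-x^\cc$ and $x^\b-x^\cc$, where $\cc$ is chosen divisible by $x_i$. After discarding redundant generators one obtains a minimal generating set in which every non-homogeneous element has $x_i$ in its support, and Corollary~\ref{homog} then yields that $\AP(S,n_i)$ is homogeneous. The main obstacle I anticipate is the bookkeeping in the case $c_jn_j\in\AP(S,n_i)$: I must verify that homogeneity of the single element $c_jn_j$ (granted by~(2)) really does force the corresponding critical binomial to be homogeneous, which amounts to checking that both expressions of $c_jn_j$—the pure power $c_j$ and the factorization on the right-hand side of the critical binomial—have the same length. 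This is exactly where the hypothesis that the \emph{finite} test set $\{c_jn_j\mid j\neq i\}\cap\AP(S,n_i)$ is homogeneous does its work, reducing the infinite homogeneity condition on $\AP(S,n_i)$ to a finite check on the critical values; care is needed to ensure that every non-homogeneous obstruction to homogeneity of $\AP(S,n_i)$ is detected by one of these finitely many critical elements, which relies essentially on the full-support property of $E_2$ in Proposition~\ref{4g}.
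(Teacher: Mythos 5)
Your proposal is correct and follows essentially the same route as the paper's own proof: both start from the Katsabekis--Ojeda generating set $E_1\cup E_2$ of Proposition~\ref{4g}, note that hypothesis~(2) forces any non-homogeneous critical binomial $x_j^{c_j}-x^{\b_j}$ with $j\neq i$ to satisfy $c_jn_j\notin\AP(S,n_i)$, split that binomial via the trick of Lemma~\ref{key} into two binomials each containing $x_i$ in their support, and conclude by Corollary~\ref{homog}. The only difference is presentational: you spell out the two cases ($c_jn_j$ in or out of the Ap\'ery set) explicitly, whereas the paper folds the first case into a one-line contrapositive.
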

\begin{proof}

(1)$\Rightarrow$(2): It is clear.

	\vspace{2mm}
(2)$\Rightarrow$(1):
From Corollary~\ref{homog}, we only need to find a set of generators for $I_S$ such that all of its  non-homogeneous elements have $x_i$ in their support. Let $E$ be the minimal set of generators which exists by Proposition~\ref{4g}. We only need to check the property for the elements in $E_1$.
 Let $f_j:=x_j^{c_j}-x^{\b_j}$  be a non-homogeneous element for some $j\neq i$. Then $c_jn_j\notin\AP(S,n_i)$ and so $c_j=n_i+s$ for some $s\in S$. Now,  we can replace $f_j$ with two binomials $x_j^{c_j}-x_ix^{\a}$ and $x_ix^{\a}-x^{\b_j}$, where $\a$ is a factorization of $s$.
\end{proof}

\begin{cor}\label{c4}
	Let $S$ be a numerical semigroup with embedding dimension four. Then
	the following statements are equivalent.
	\begin{enumerate}
		\item  $S$ is homogeneous.
		\item $\{c_2n_2,c_3n_3,c_4n_4\}\cap\AP(S,n_1)$ is a homogeneous set.
		\item $c_2n_2$ and $c_4n_4$ are not in $\AP(S,n_1)$ and, if $c_3n_3\in\AP(S,n_1)$, then  $\{c_3n_3\}$ is homogeneous.
	\end{enumerate}
\end{cor}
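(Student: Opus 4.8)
The plan is to prove the chain of equivalences by leaning on Theorem~\ref{ci}, which already reduces the homogeneity of $\AP(S,n_i)$ to checking homogeneity on the finite set $\{c_jn_j \mid j\neq i\}\cap\AP(S,n_i)$. Setting $i=1$, the equivalence (1)$\Leftrightarrow$(2) is then immediate from Theorem~\ref{ci}, since $S$ being homogeneous is by Definition~\ref{D} exactly the statement that $\AP(S,n_1)$ is homogeneous, and the relevant set is $\{c_2n_2,c_3n_3,c_4n_4\}\cap\AP(S,n_1)$. So the real content is the equivalence (2)$\Leftrightarrow$(3), which identifies precisely which of the three critical elements $c_jn_j$ can possibly lie in $\AP(S,n_1)$.

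The key observation to establish first is that $c_2n_2\notin\AP(S,n_1)$ and $c_4n_4\notin\AP(S,n_1)$ always hold, regardless of homogeneity. For this I would use the critical binomial structure: $c_j$ is by definition the least positive integer with $c_jn_j\in\langle n_1,n_2,n_3,n_4\rangle\setminus\{n_j\}$, so $c_jn_j$ has an expression not involving $n_j$. For $j=4$, since $n_4$ is the largest generator, any such alternate expression of $c_4n_4$ forces the use of a smaller generator, and I would argue that $c_4n_4-n_1\in S$; concretely, the alternate factorization of $c_4n_4$ in terms of $n_1,n_2,n_3$ must have positive $n_1$-coordinate (otherwise $c_4n_4\in\langle n_2,n_3\rangle$ leads, via an order/size comparison using that $n_1$ is the multiplicity, to a contradiction with minimality of the generating set or with $c_4$ being critical). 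The symmetric argument handles $c_2n_2$: one shows the critical expression of $c_2n_2$ away from $n_2$ must involve $n_1$, again because $n_2$ is small relative to the others and the only way to represent $c_2n_2$ without $n_2$ and without $n_1$ would be through $n_3,n_4$, which I would rule out by a degree/size inequality. Once $c_jn_j-n_1\in S$ is shown, $c_jn_j\notin\AP(S,n_1)$ by the very definition of the Ap\'ery set.

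With those two memberships excluded, the set $\{c_2n_2,c_3n_3,c_4n_4\}\cap\AP(S,n_1)$ reduces to $\{c_3n_3\}\cap\AP(S,n_1)$, which is either empty (automatically homogeneous) or equal to $\{c_3n_3\}$. Thus statement (2) becomes exactly the condition ``$c_2n_2,c_4n_4\notin\AP(S,n_1)$ (which always holds) and, if $c_3n_3\in\AP(S,n_1)$, then $\{c_3n_3\}$ is homogeneous,'' which is precisely statement (3). This gives (2)$\Leftrightarrow$(3) and completes the corollary.

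I expect the main obstacle to be the two exclusion arguments $c_2n_2,c_4n_4\notin\AP(S,n_1)$, and in particular pinning down why the critical alternate expression must recruit the generator $n_1$ rather than being expressible purely in the remaining two generators. The case $c_4n_4$ (largest generator) should be the cleaner of the two, since any representation without $n_4$ is forced to be ``small'' in a way that invites $n_1$; the case $c_2n_2$ is more delicate because $n_3,n_4$ are both available as substitutes, so I would need a careful inequality comparing $c_2n_2$ with the minimal contributions from $n_3,n_4$, or alternatively an appeal to Proposition~\ref{4g} and the critical-ideal structure to see that the critical binomial for $x_2$ cannot have support confined to $\{x_2,x_3,x_4\}$. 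Making these size estimates precise, rather than the formal bookkeeping of the equivalence, is where the genuine work lies.
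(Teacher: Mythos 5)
Your reduction of (1)$\Leftrightarrow$(2) to Theorem~\ref{ci} is fine, but the pivot of your plan --- that $c_2n_2\notin\AP(S,n_1)$ and $c_4n_4\notin\AP(S,n_1)$ hold \emph{unconditionally}, ``regardless of homogeneity'' --- is false, so your proof of (2)$\Leftrightarrow$(3) cannot be completed. Concretely, take $S=\langle 10,11,14,19\rangle$: one checks $11,22\notin\langle 10,14,19\rangle$ while $33=14+19$, so $c_2=3$ and $c_2n_2=33$; since $33-10=23\notin S$, we get $c_2n_2\in\AP(S,n_1)$. Similarly, for $S=\langle 9,10,11,16\rangle$ one has $16\notin\langle 9,10,11\rangle$ and $32=10+2\cdot 11$, so $c_4=2$ and $c_4n_4=32$; since $32-9=23\notin S$, we get $c_4n_4\in\AP(S,n_1)$. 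In both examples the critical element lies simultaneously in the Ap\'ery set and in the subsemigroup generated by the two remaining generators, with no conflict whatsoever with minimality of the generating set or with criticality of $c_i$ (so the alternative appeal to Proposition~\ref{4g} fails too: the critical binomial $x_2^3-x_3x_4$ of the first example has support contained in $\{x_2,x_3,x_4\}$). The size comparison you invoke only shows that the two factorizations have \emph{different lengths}; it produces no contradiction by itself. Both semigroups are of course non-homogeneous, which is exactly what the corollary predicts.

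The correct logical direction, and the one the paper takes, is to derive the exclusions \emph{from} hypothesis (2). Suppose (2) holds and $c_2n_2\in\AP(S,n_1)$. No factorization of an element of $\AP(S,n_1)$ can involve $n_1$ (otherwise subtracting $n_1$ stays in $S$), so the critical expression $c_2n_2\in\langle n_1,n_3,n_4\rangle$ degenerates to $c_2n_2=r_3n_3+r_4n_4$; since $n_2<n_3,n_4$ this forces $c_2>r_3+r_4$, hence $c_2n_2$ is an element of $\{c_2n_2,c_3n_3,c_4n_4\}\cap\AP(S,n_1)$ with two factorizations of different lengths, contradicting (2). The symmetric comparison ($n_4>n_2,n_3$ forces $c_4<r_2+r_3$) rules out $c_4n_4$, and (3) follows; then (3)$\Rightarrow$(1) is Theorem~\ref{ci} again, since the only critical element that can lie in $\AP(S,n_1)$ is $c_3n_3$. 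So your overall architecture (reduce to the three critical elements, then decide which can sit in the Ap\'ery set) is the paper's, but the exclusion of $c_2n_2$ and $c_4n_4$ is a consequence of homogeneity of that finite set, not an absolute fact about four-generated semigroups; once the hypothesis is used where it must be, the ``genuine work'' you anticipated on size estimates collapses to a one-line length comparison.
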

\begin{proof}
	(1)$\Rightarrow$(2): It is clear since $\AP(S,n_1)$ is homogeneous.
	
	\vspace{2mm}
	(2)$\Rightarrow$(3): If $c_2n_2\in\AP(S,n_1)$, then  $c_2n_2\in\langle n_3,n_4\rangle$. Therefore $c_2n_2=r_3n_3+r_4n_4$ for some non-negative integers $r_3,r_4$. Since $n_2<n_3,n_4$, we have $c_2>r_3+r_4$, a contradiction. As $n_4>n_2,n_3$, a similar argument shows that $c_4n_4$ is not in $\AP(S,n_1)$.
	
    \vspace{2mm}
	(3) $\Rightarrow$(1): It follows from Theorem~\ref{ci}.
\end{proof}

The following well known result by H. Bresinsnky provides the systems of generators for the defining ideals of non-complete intersection symmetric numerical semigroups with embedding dimension four:

\begin{thm}[\text{Bresinsky's Theorem \cite[Theorem 3]{B}}]\label{B}
	Let $S$ be a numerical semigroup of embedding dimension four.
	Then  $S$ is symmetric and non-complete intersection if and only if,
	after permuting variables, if necessary,  $I_S$ is generated by the set
	\[\begin{array}{ll}G=&\{f_1=x_1^{c_1}-x^{c_{13}}_3x_4^{c_{14}}, f_2=x_2^{c_2}-x_1^{c_{21}}x_4^{c_{24}}, f_3=x_3^{c_3}-x_1^{c_{31}}x_2^{c_{32}},\\ & \ f_4=x_4^{c_4}-x_2^{c_{42}}x_3^{c_{43}}, f_5=x_3^{c_{43}}x_1^{c_{21}}-x_2^{c_{32}}x_4^{c_{14}}\},\end{array}\]
	where  $0<c_{ij}<c_j$.
\end{thm}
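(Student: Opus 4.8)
The plan is to obtain Bresinsky's list from the structure theory of grade-three Gorenstein ideals, handling the two implications of the equivalence separately. The homological set-up is as follows: $\kk[S]=P/I_S$ is a one-dimensional Cohen--Macaulay domain while $P=\kk[x_1,\dots,x_4]$ is regular of dimension four, so $I_S$ is a perfect ideal of grade three. By Kunz's theorem $S$ is symmetric exactly when $\kk[S]$ is Gorenstein, so the symmetric hypothesis becomes the assertion that $I_S$ is a grade-three Gorenstein ideal. This translation is the fulcrum of the whole argument.

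For the forward implication I would invoke the Buchsbaum--Eisenbud structure theorem: a grade-three Gorenstein ideal is generated by the $2t\times 2t$ sub-maximal Pfaffians of a $(2t+1)\times(2t+1)$ alternating matrix $M$ with entries in the maximal ideal, and its minimal number of generators is the odd integer $2t+1$. The complete intersection case is $t=1$ (three generators), so the non-complete-intersection hypothesis forces $t\ge 2$ and hence at least five generators. The step I expect to be the main obstacle is the matching \emph{upper} bound: in embedding dimension four one must show that a symmetric, non-complete-intersection $I_S$ has \emph{exactly} five minimal generators, i.e. $t=2$. Since $\mu(I_S)$ is genuinely unbounded in embedding dimension four without the symmetric hypothesis, this is precisely where the arithmetic of $S$ must enter. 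I would extract it by combining the four critical binomials $x_i^{c_i}-\prod_{j\ne i}x_j^{c_{ij}}$ (one for each variable, as $S$ requires all four generators) with the grading $\deg x_i=n_i$: the matrix $M$ may be chosen homogeneous for this grading, and a degree-and-support analysis of a homogeneous alternating $M$ whose Pfaffians must recover all four critical relations is what should force $2t+1=5$.

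It then remains to read off the binomial shape and to settle the converse. Because $I_S$ is a prime lattice ideal it is generated by binomials, and a homogeneous choice of $M$ has each off-diagonal entry a single power of a variable; evaluating the five $4\times 4$ Pfaffians and permuting variables should yield precisely the four critical binomials $f_1,\dots,f_4$ together with the mixed relation $f_5=x_3^{c_{43}}x_1^{c_{21}}-x_2^{c_{32}}x_4^{c_{14}}$, the inequalities $0<c_{ij}<c_j$ recording that each $c_j$ is least with $c_jn_j\in\langle n_k:k\ne j\rangle$ and that every support is proper. For the converse, given $I_S$ generated by a set of the stated form, I would display the explicit $5\times 5$ alternating matrix whose sub-maximal Pfaffians are $f_1,\dots,f_5$; acyclicity of the associated Buchsbaum--Eisenbud complex then certifies that $P/I_S$ is Gorenstein of grade three, so $S$ is symmetric, while having five minimal generators rather than three shows it is not a complete intersection.
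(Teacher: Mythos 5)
First, a point of reference: the paper does not prove this statement at all --- it is quoted as Bresinsky's Theorem from \cite[Theorem 3]{B} --- so there is no internal proof to compare yours against, and your proposal must stand as a proof of Bresinsky's theorem itself. Your translation is correct: by Kunz's theorem symmetry of $S$ is Gorensteinness of $\kk[S]$, $I_S$ is perfect of grade $3$, and Buchsbaum--Eisenbud then presents $I_S$ by the sub-maximal Pfaffians of a $(2t+1)\times(2t+1)$ alternating matrix with $\mu(I_S)=2t+1$, so the non-complete-intersection hypothesis gives $\mu(I_S)\geq 5$. The converse half of your plan is also essentially workable: using the relations $c_1=c_{21}+c_{31}$, $c_2=c_{32}+c_{42}$, $c_3=c_{13}+c_{43}$, $c_4=c_{14}+c_{24}$ (part of Bresinsky's data, and recoverable from primeness of $I_S$), the alternating matrix with $m_{12}=x_1^{c_{21}}$, $m_{13}=x_2^{c_{32}}$, $m_{15}=-x_3^{c_{13}}$, $m_{24}=x_4^{c_{14}}$, $m_{25}=x_2^{c_{42}}$, $m_{34}=x_3^{c_{43}}$, $m_{35}=x_4^{c_{24}}$, $m_{45}=x_1^{c_{31}}$ and $m_{14}=m_{23}=0$ has Pfaffians exactly $\pm f_1,\dots,\pm f_5$, so Buchsbaum--Eisenbud acyclicity (the grade is $3$ because $\kk[S]$ is a curve) yields Gorensteinness, and a minimality check using $0<c_{ij}<c_j$ rules out $\mu(I_S)=3$.

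The genuine gap is the forward implication, and it sits exactly where you flagged it. You assert that a ``degree-and-support analysis'' of an $S$-homogeneous alternating matrix whose Pfaffians recover the four critical binomials ``should force'' $2t+1=5$ and the stated shapes, but no argument is given, and none is easy. Concretely: (i) $S$-homogeneity of the entries of $M$ does not make them monomials, let alone pure powers of single variables --- distinct monomials can have equal $S$-degree (that is precisely what a binomial of $I_S$ records), so a homogeneous entry may have many terms; and even with monomial entries each $4\times 4$ Pfaffian is a sum of \emph{three} products, so you must also force zeros of $M$ into the right positions before any Pfaffian can be a binomial. (ii) Nothing in the Buchsbaum--Eisenbud structure caps $t$: a priori the matrix could be $7\times 7$ or larger with the four critical binomials appearing among its Pfaffians, so requiring it to ``recover the critical relations'' does not obviously shrink it. The bound $\mu(I_S)\leq 5$ for symmetric, non-complete-intersection, $4$-generated $S$ is not a formal consequence of codimension-$3$ Gorenstein structure theory plus binomiality; it is the actual content of Bresinsky's theorem, and its known proofs extract it from a genuinely arithmetic analysis of factorizations and Ap\'ery sets of $S$ (this is Bresinsky's long combinatorial argument). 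As written, your forward direction is a plan with its central step missing, not a proof.
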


\begin{rem}\label{UG}
	Let $S$ be a symmetric numerical semigroup of embedding dimension four. If $S$ is not complete intersection,  then the set $G$ given in Theorem~\ref{B}, is the unique minimal system of binomial  generators for $I_S$ (cf. \cite[Corollary 3.15]{KO}).
\end{rem}

\begin{prop}
	Let $S$ be a symmetric and non-complete intersection numerical semigroup with embedding dimension four. Using the notation of Theorem~\ref{B}, the following statements hold.
	\begin{enumerate}
		\item $\AP(S,n_1)$ is homogeneous if and only if $f_4$ is a homogeneous polynomial.
		\item For each $i=2,3,4$, $\AP(S,n_i)$ is homogeneous if and only if $f_{i-1}$ is a homogeneous polynomial.
	\end{enumerate}
\end{prop}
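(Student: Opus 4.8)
The plan is to combine the uniqueness of the generating set supplied by Remark~\ref{UG} with the support criterion of Corollary~\ref{homog}, after which the statement reduces to simply reading off which generator omits each variable $x_i$.

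First I would invoke Remark~\ref{UG}: the set $G=\{f_1,\ldots,f_5\}$ of Theorem~\ref{B} is the \emph{unique} minimal system of generators of $I_S$. This is the crucial leverage, because it collapses the existential clause in Corollary~\ref{homog}(2). Indeed, that corollary says $\AP(S,n_i)$ is homogeneous if and only if \emph{some} minimal generating set has $x_i$ in the support of all its non-homogeneous elements; since $G$ is the only such set, this becomes the concrete condition that $x_i$ lie in the support of every non-homogeneous element of $G$ itself.

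Next I would record the supports directly from the shapes in Theorem~\ref{B}. The generator $f_5$ has full support, while each of $f_1,f_2,f_3,f_4$ omits exactly one variable: $f_1$ omits $x_2$, $f_2$ omits $x_3$, $f_3$ omits $x_4$, and $f_4$ omits $x_1$. Thus for each index $i$ there is precisely one generator that fails to contain $x_i$, and this generator is $f_4$ when $i=1$ and $f_{i-1}$ when $i\in\{2,3,4\}$; every other generator, $f_5$ in particular, does contain $x_i$.

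Combining these two observations finishes the argument. The condition ``$x_i$ belongs to the support of all non-homogeneous elements of $G$'' holds exactly when the single generator omitting $x_i$ is itself homogeneous: if that generator were non-homogeneous it would be a non-homogeneous element of $G$ missing $x_i$, contradicting the condition, while conversely if it is homogeneous then every non-homogeneous generator contains $x_i$. Reading off the correspondence yields the claim---$\AP(S,n_1)$ is homogeneous iff $f_4$ is homogeneous, and for $i=2,3,4$, $\AP(S,n_i)$ is homogeneous iff $f_{i-1}$ is homogeneous. I expect no genuine obstacle here beyond the bookkeeping; the only point needing a little care is that $f_5$, having full support, never interferes, so the homogeneity of $\AP(S,n_i)$ is controlled solely by the one generator omitting $x_i$.
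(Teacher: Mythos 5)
Your proof is correct, and it takes a genuinely different route from the paper's. The paper also leans on Remark~\ref{UG}, but it uses uniqueness to show that each critical element $c_in_i$ belongs to $\AP(S,n_j)$ whenever $x_j\notin\supp(f_i)$, and then concludes via Theorem~\ref{ci}, the embedding-dimension-four criterion stating that $\AP(S,n_i)$ is homogeneous if and only if $\{c_jn_j \mid j\neq i\}\cap\AP(S,n_i)$ is a homogeneous set; that route implicitly needs indispensability a second time, to know that the only factorizations of $c_{i-1}n_{i-1}$ are the two appearing in $f_{i-1}$, so that homogeneity of the singleton set is equivalent to homogeneity of the binomial. Your argument bypasses the critical-element machinery entirely: uniqueness collapses the existential quantifier in Corollary~\ref{homog}, after which the claim is pure bookkeeping of supports in Bresinsky's list. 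Two small points are worth recording to make it airtight: (i) uniqueness is needed only for the forward implication (a homogeneous Ap\'ery set forces the generator omitting $x_i$ to be homogeneous), since the converse is just Corollary~\ref{homog} applied to $E=G$; and (ii) to identify the generating set $E$ furnished by Corollary~\ref{homog} with $G$, one should note that this $E$ consists of binomials (it is constructed via Lemma~\ref{key}) and that uniqueness holds up to nonzero scalars and signs, neither of which affects supports or homogeneity. Both observations are immediate, so your proof is complete, and it is arguably the more direct of the two.
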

\begin{proof}
	As $G$ is the unique minimal system of generators by Remark~\ref{UG}, the critical elements $c_in_i$ belong to $\AP(S,n_j)$ for $x_j\notin\supp(f_i)$.  Now, the result follows from Theorem~\ref{ci}.
\end{proof}

\begin{ex}
Let $S=\langle 8,13,15,17 \rangle$. Then, $I_S=(f_1=x_1^4-x_4x_3, f_2=x_2^3-x_1x_3, f_3=x_4^2-x_1x_2^2, f_4=x_3^2-x_2x_4, f_5=x_2^2x_3-x_1^3x_4)$ (with order $8,13,17,15$). Since $f_4$ is homogeneous, by the above proposition $S$ is homogeneous. We also have that $G(S)$ is not Cohen-Macaulay because $15+17= 4 \times 8$, hence $S$ is not of homogeneous type.
\end{ex}

The following two families extracted from \cite{GR} can be used to produce symmetric numerical semigroups $S$ with embedding dimension four and given multiplicity $m$, which are not of homogeneous type neither homogeneous:

\begin{lem}\label{sym}
	Let $q$ be a positive integer.
	\begin{enumerate}
		\item  If $m=2q+4$, then the numerical semigroup generated by \[\{n_1:=m,n_2:=m+1,n_3:=(q+1)m-2,n_4:=(q+1)m-1\},\] is a symmetric numerical semigroup of embedding dimension four and Frobenius number $F(S)=2qm+2q+1$.
		\item If  $m=2q+5$. Then the numerical semigroup generated by
		\[\{n_1:=m,n_2:=m+1,n_3:=(q+1)m+q+2,n_4:=(q+1)m+q+3\},\] is a  symmetric numerical semigroup of embedding dimension four, with Frobenius number $F(S)=2(q+1)m-1$.
	\end{enumerate}
\end{lem}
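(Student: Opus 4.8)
The plan is to handle both families at once by computing the Ap\'ery set $\AP(S,n_1)$ explicitly and reading off all three assertions from it. Recall the standard facts that an element $w\in S$ lies in $\AP(S,n_1)$ exactly when $w-n_1\notin S$, that $\AP(S,n_1)$ contains precisely one element of each residue class modulo $n_1$, that $F(S)=\max\AP(S,n_1)-n_1$, and that $S$ is symmetric if and only if the map $w\mapsto \max\AP(S,n_1)-w$ carries $\AP(S,n_1)$ onto itself (equivalently, $\AP(S,n_1)$ has a unique $\preceq$-maximal element). Since $n_1=m$ and $n_2=m+1$ are coprime we have $\gcd(n_1,\dots,n_4)=1$, so $S$ is a numerical semigroup and these tools apply in both cases.

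First I would write down a candidate for $\AP(S,n_1)$. Reducing modulo $m$, in family~(1) one has $n_2\equiv 1$, $n_3\equiv -2$, $n_4\equiv -1$, and the natural guess is
\[
\AP(S,m)=\{\,rn_2 : 0\le r\le m-3\,\}\cup\{n_3,n_4\}.
\]
In family~(2), where $m=2q+5$, one has $n_2\equiv 1$, $n_3\equiv q+2=\tfrac{m-1}{2}$, $n_4\equiv q+3=\tfrac{m+1}{2}$, and the guess becomes
\[
\AP(S,m)=\{\,rn_2 : 0\le r\le q+1\,\}\cup\{n_3,n_4\}\cup\{\,n_4+rn_2 : 1\le r\le q+1\,\}.
\]
In each case the listed set lies in $S$ by inspection, has exactly $m$ elements, and a routine count of residues modulo $m$ shows that its elements occupy pairwise distinct classes (covering all of them). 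Consequently the candidate coincides with $\AP(S,m)$ as soon as one proves that $w-m\notin S$ for every listed $w$.

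This last point is the heart of the proof and the main obstacle: it requires showing that a prescribed list of integers are gaps of $S$ for an arbitrary parameter $q$. I would argue by contradiction, writing $w-m=\sum_i a_in_i$ with $a_i\ge 0$, reducing modulo $m$ to constrain the linear form $a_2-2a_3-a_4$ (family~(1)) or its analogue (family~(2)), and then comparing total sizes to rule out such an expression. The two families need slightly different bookkeeping because their Ap\'ery sets have different shapes: family~(1) consists of the multiples $rn_2$ together with the two ``tail'' generators $n_3,n_4$, whereas family~(2) is built symmetrically around $n_3$ and interleaves multiples of $n_2$ with $n_4$.

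Granting the Ap\'ery sets, the three conclusions are immediate. The minimal generators of $S$ are $n_1$ together with the $\preceq$-minimal nonzero elements of $\AP(S,m)$; the multiples $rn_2$ with $r\ge2$ (and, in family~(2), the elements $n_4+rn_2$) are manifestly non-minimal, leaving exactly $n_2,n_3,n_4$, so $S$ has embedding dimension four. The largest Ap\'ery element is $(m-3)n_2$ in family~(1) and $n_4+(q+1)n_2=2n_3$ in family~(2); subtracting $m$ and simplifying with $m=2q+4$, respectively $m=2q+5$, gives $F(S)=2qm+2q+1$ and $F(S)=2(q+1)m-1$. Finally, for symmetry one checks that the complement involution preserves $\AP(S,m)$: in family~(1) the identity $n_3+n_4=(m-3)n_2$ pairs $rn_2\leftrightarrow(m-3-r)n_2$ and $n_3\leftrightarrow n_4$, while in family~(2) it pairs $rn_2\leftrightarrow n_4+(q+1-r)n_2$ and fixes the midpoint $n_3=\tfrac12\max\AP(S,m)$. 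In either case $w\mapsto \max\AP(S,m)-w$ is a bijection of $\AP(S,m)$, so $S$ is symmetric.
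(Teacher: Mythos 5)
Your strategy is sound, and it is genuinely different from the paper's: the paper proves this lemma by citation alone, deferring part (1) to \cite[Lemma 4.22]{GR} and part (2) to \cite[Lemma 4.23]{GR}, while you construct the Ap\'ery sets from scratch. The structural facts you assert are all correct: in family (1), $n_3+n_4=(m-3)n_2$, so your candidate set is stable under $w\mapsto (m-3)n_2-w$ with unique maximum $(m-3)n_2$, whence $F(S)=(m-3)n_2-m=m^2-3m-3=2qm+2q+1$; in family (2), $n_4+(q+1)n_2=2n_3$, the involution $w\mapsto 2n_3-w$ exchanges $rn_2$ with $n_4+(q+1-r)n_2$ and fixes $n_3$, giving $F(S)=2n_3-m=2(q+1)m-1$. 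The residue counts, the formula $F(S)=\max\AP(S,m)-m$, and the symmetry criterion via the complement involution are all used correctly.

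However, the proposal as written has a genuine gap, one you flag yourself: every conclusion is conditional on the claim that each listed $w$ satisfies $w-m\notin S$, and for this you only describe a method (contradiction, reduction mod $m$, size comparison) without executing it. Since the lemma is precisely the assertion that these parametric families behave as claimed for every $q$, that unexecuted verification is where the entire content lives; what you have established unconditionally is only that the candidate set is consistent (correct cardinality, distinct residues). The gap is fillable, and uniformly in both families: if $w-m=\sum_i a_in_i$ with $a_i\ge 0$, then since every listed $w$ satisfies $w-m<2n_3$ (in family (1) this reads $(m-3)n_2-m=m^2-3m-3<m^2-2m-4=2n_3$), one gets $a_3+a_4\le 1$; in each of the three cases $(a_3,a_4)\in\{(0,0),(1,0),(0,1)\}$, reduction mod $m$ forces $a_2$ into a single admissible value, and that value makes $a_2n_2+a_3n_3+a_4n_4$ strictly exceed $w-m$ or forces $a_1m<0$, a contradiction. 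You should also not gloss the complementary half of the embedding-dimension claim: besides noting that $rn_2$ ($r\ge 2$) and $n_4+rn_2$ are non-minimal, one must check that $n_2$, $n_3$, $n_4$ are $\preceq$-minimal in $\AP(S,m)\setminus\{0\}$ (e.g.\ $n_3-n_2\notin S$ and $n_4-n_3=1\notin S$); this again follows from the established Ap\'ery set by residue considerations, but it is a needed step, not something that ``manifestly'' remains.
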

\begin{proof}
	(1) follows from \cite[Lemma 4.22]{GR} and (2) is the subject of \cite[Lemma 4.23]{GR}.
\end{proof}

\begin{prop}
	Let $S$ be a symmetric numerical semigroup of embedding dimension four, with one of the structures of Lemma~\ref{sym}. Then $S$ is not of homogeneous type and  $\AP(S,n_i)$ is not homogeneous for all $i=1,\ldots,4$.
\end{prop}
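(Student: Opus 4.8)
The plan is to verify the two structures from Lemma~\ref{sym} separately, and in each case exhibit explicit non-maximal expressions inside the Apéry sets $\AP(S,n_i)$ while also producing a torsion element to rule out the Cohen-Macaulay property of $G(S)$ (which by Remark~\ref{ci} forces $S$ not to be of homogeneous type). Since the generators are given as explicit linear expressions in $m$ and $q$, everything reduces to elementary arithmetic identities among the $n_i$. The first thing I would do is record a few basic relations. For structure (1), with $n_1=m$, $n_2=m+1$, $n_3=(q+1)m-2$, $n_4=(q+1)m-1$, the key observation is that $n_3+n_4=(2q+2)m-3$ and, since $m=2q+4$, one checks $n_3+n_4 = 2qm + (4q+4) - 3 = 2qm+4q+1$. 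I would look for a second factorization of this (or a nearby) element using $n_1,n_2$ only; the natural candidate is $(2q+1)n_1 + (\text{something})\,n_2$ or a combination witnessing that $n_3+n_4 \in \langle n_1,n_2\rangle$ with a strictly larger length. Similarly for structure (2), with the shifted formulas, I would compute $n_3+n_4$ and the relevant critical values and find the analogous collision.

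Concretely, the strategy for the \emph{not of homogeneous type} conclusion is to find a torsion element, i.e.\ some $s\in S$ with $\ord_S(s+cn_1)>\ord_S(s)+c$ for some $c>0$, which by Proposition~\ref{CM} shows $T(S)\neq\emptyset$ and hence $G(S)$ is not Cohen-Macaulay; Remark~\ref{ci} then gives that $S$ is not of homogeneous type. The cleanest route is to exhibit an element admitting two expressions of different lengths where the longer one uses $n_1$ more heavily: for instance if $an_3+bn_4 = a'n_1 + \cdots$ with the right-hand side of strictly greater length, then adding a suitable multiple of $n_1$ jumps the order. Given the tight interval structure ($n_1$ and $n_2$ consecutive, $n_3$ and $n_4$ consecutive, with $n_3,n_4\approx (q+1)m$), such a collision should come from writing a multiple of the large generators as a combination dominated by the small generators $n_1,n_2$.

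For the \emph{homogeneity} part, I would invoke Corollary~\ref{c4} (for $\AP(S,n_1)$) together with the general Theorem~\ref{ci}: to show each $\AP(S,n_i)$ is \emph{not} homogeneous it suffices to locate, for each fixed $i$, a single element of $\{c_jn_j : j\neq i\}\cap \AP(S,n_i)$ with two expressions of different lengths, or more directly to exhibit one critical product $c_jn_j$ lying in $\AP(S,n_i)$ that is itself non-homogeneous. The efficient approach is to first determine the critical binomials: I expect that from the consecutiveness one gets relations like $n_1+n_4 = n_2+n_3$ and a pure-power relation expressing $c_in_i$ in the other generators, and that the symmetric, non-complete-intersection structure (so that Bresinsky's Theorem~\ref{B} and the uniqueness in Remark~\ref{UG} apply) pins down exactly which $c_in_i$ sit in which Apéry set. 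Then I would check directly that the witnessing element fails to have all expressions of equal length.

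The main obstacle I anticipate is the bookkeeping needed to pin down the critical coefficients $c_i$ and the maximal/minimal expressions uniformly in both $q$ and the parity branch ($m=2q+4$ versus $m=2q+5$): one must verify that the candidate element genuinely lies in the Apéry set $\AP(S,n_i)$ (i.e.\ that subtracting $n_i$ leaves the semigroup) and that its two competing expressions really have different total lengths, and this has to hold for \emph{all} $q$, not just small cases. A secondary subtlety is that the two families have slightly different arithmetic, so the torsion witness and the non-homogeneity witness must be produced separately for each; I would handle case (1) in full and then indicate that case (2) follows by the same computation with the shifted formulas, rather than repeating every identity. Once a single torsion element and, for each $i$, a single non-homogeneous Apéry element are in hand, the conclusion is immediate from Propositions~\ref{CM} and the cited corollaries.
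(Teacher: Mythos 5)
Your treatment of the Ap\'ery-set part is essentially the paper's: pin down the critical binomials, use Bresinsky's Theorem~\ref{B} together with the uniqueness of the minimal generating set (Remark~\ref{UG}, available because $S$ is symmetric and not a complete intersection), observe that all the critical binomials are non-homogeneous, and conclude via Theorem~\ref{ci}. That outline is sound, modulo the arithmetic needed to verify the five generators and the exponents $c_i$, which the paper carries out explicitly.

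The genuine gap is in your strategy for ``not of homogeneous type''. You propose, for \emph{both} families, to exhibit a torsion element, so that $G(S)$ is not Cohen--Macaulay and Remark~\ref{ci} applies. This works for the family with $m=2q+5$ (there $n_1+n_4=(q+2)n_2$, so $n_4$ is torsion), but it cannot work for the family with $m=2q+4$: in that case the tangent cone \emph{is} Cohen--Macaulay, so there is no torsion element to find. Concretely, for $q=1$ the first family gives $S=\langle 6,7,10,11\rangle$, with $\AP(S,6)=\{0,7,14,21,10,11\}$; every element $w$ of this set satisfies $\ord_S(w)=\lfloor w/6\rfloor$, and since $\ord_S(s)\le \lfloor s/6\rfloor$ for every $s\in S$ (as $6$ is the smallest generator), one gets $\ord_S(w+6k)=\ord_S(w)+k$ for all $k\ge 0$, i.e.\ $T(S)=\emptyset$ and $G(S)$ is Cohen--Macaulay by Proposition~\ref{CM}. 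The implication ``homogeneous type $\Rightarrow$ Cohen--Macaulay tangent cone'' is one-way, so Cohen--Macaulayness here says nothing, and the failure of homogeneous type must be detected differently. The paper does this by a Betti number argument: the unique minimal generating set
\[
G_1=\{x_1^{q+2}-x_2x_4,\ x_4^2-x_1^{q+1}x_3,\ x_2^{2q+1}-x_3x_4,\ x_3^2-x_1x_2^{2q},\ x_1x_4-x_2x_3\}
\]
is not a standard basis, because $f=x_2^{2q+2}-x_3x_1^{q+2}\in I_S$ has initial form lying outside the ideal generated by $(G_1)_*=\{x_2x_4,x_4^2,x_3x_4,x_3^2,x_1x_4-x_2x_3\}$ (reduce modulo $x_4$ to see this); uniqueness of the minimal generating set then forces $\beta_1(G(S))>\beta_1(S)$. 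Without this step, or some substitute for it, your proof of the first family cannot be completed. For the same reason, your plan to ``handle case (1) in full and then indicate that case (2) follows by the same computation'' is structurally wrong: the two families are not parallel, and each requires its own argument for the homogeneous-type statement.
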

\begin{proof}
	In the case $m=2q+4$, we show that $I_S$ is generated by
	\[G_1:=\{x_1^{q+2}-x_2x_4, x_4^2-x_1^{q+1}x_3, x_2^{2q+1}-x_3x_4,x_3^2-x_1x_2^{2q}, x_1x_4-x_2x_3\}.\]
	First, we check that $G_1\subseteq I_S$:
	\begin{align*}
	 (q+2)n_1&=(q+2)m=(q+1)m+m+1-1=n_2+n_4,\\
	 (2q+1)n_2&=(2q+1)(m+1)=(2q+1)m+2q+1=2qm+m+2q+1\\ &=2qm+m+m-3=2(q+1)m-3=n_3+n_4, \\
	 2n_3&=2(q+1)m-4=2qm+2m-4=2qm+m+2q=m+2q(m+1) \\ &=n_1+2qn_2,\\
	 2n_4&=2(q+1)m-2=(q+1)m+(q+1)m-2=(q+1)n_1+n_3,\\
	 n_1+n_4&=m+(q+1)m-1=m+1-2+(q+1)m=n_2+n_3.
	\end{align*}
Hence $c_3=c_4=2$. From the proof of \cite[Lemma~4.22]{GR}, $n_3+n_4=(2q+1)n_2\in\AP(S,n_1)$. Therefore $c_2=2q+1$. Note that $n_2+n_4-n_3= m+2\notin S$. So that $(q+1)n_1=n_2+n_4$ has unique expression, in particular $c_1=q+1$.
As the last relation $x_1x_4-x_2x_3$ is not generated by the others, $I_S$ has more than 4 generators and so $S$ is not complete intersection. Now, Theorem~\ref{B} implies that $G_1$ is the minimal generating set of $I_S$. Note that none of the critical binomials in $G_1$ are homogeneous, so it follows that $\AP(S,n_i)$ is not homogeneous for all $i=1,\ldots,4$, by Theorem~\ref{ci}.
Now,  $f:=x_2^{2q+2}-x_3x_1^{q+2}=x_2(x_2^{2q+1}-x_3x_4)-x_3(x_1^{q+2}-x_2x_4)\in I_S$, where $2q+2\geq q+3$. Hence $f_* = x_3x_1^{q+2}$ is not generated by the elements of $(G_1)_*=\{x_2x_4, x_4^2,x_3x_4,x_3^2, x_1x_4-x_2x_3\}$. Therefore $G_1$ is not a standard basis and so $\beta_1(G(S))>\beta_1(R)$.

\medskip

In the second case $m=2q+5$, we show with a similar argument that
\[G_2:=\{x_1^{2q+3}-x_3x_4, x_2^{q+2}-x_1x_3, x_3^2-x_2^{q+1}x_4, x_4^2-x_2x_1^{2q+2},x_1x_4-x_2x_3\},\]
is the minimal set of generators of $I_S$.  Since all critical binomials in $G_2$ are non-homogeneous, $\AP(S,n_i)$ is not homogeneous for all $i=1,\ldots,4$, by Theorem~\ref{ci}.  Finally, the relation $x_2^{q+2}-x_1x_3$ implies that $n_3$ is a torsion element and $G(S)$ is not Cohen-Macaulay. In particular, $S$ is not of homogeneous type.
\end{proof}

For the pseudo symmetric case, we get  generators of the defining ideal  $I_S$ from the following result by J. Komeda:

\begin{thm}[\text{Komeda's Theorem \cite[Theorem 6.4]{K}}]\label{K}
     Let $S$ be a pseudo symmetric numerical semigroup of embedding dimension four.  After permuting variables, if necessary,
      the ideal $I_S$ is generated by
     \[\begin{array}{ll}G=&\{f_1=x_1^{c_1}-x_3x_4^{c_{4}-1}, f_2=x_2^{c_2}-x_1^{c_{21}}x_4, f_3=x_3^{c_3}-x_1^{c_1-c_{21}-1}x_2,\\ & \ f_4=x_4^{c_4}-x_1x_2^{c_{2}-1}x_3^{c_{3}-1}, f_5=x_3^{c_{3}-1}x_1^{c_{21}+1}-x_2x_4^{c_{4}-1}\},\end{array}\]
     where $0<c_{21}<c_1$.
\end{thm}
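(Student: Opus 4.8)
The plan is to reduce Komeda's classification to determining the two pieces of a structured minimal generating set, and then to pin down the exponents using the pseudo-symmetric hypothesis in the form $\PF(S)=\{F(S)/2,\,F(S)\}$. By Proposition~\ref{4g}, every embedding dimension four semigroup admits a minimal generating set $E=E_1\cup E_2$ of $I_S$, where $E_1$ minimally generates the critical ideal $C_S$ and $E_2$ consists of full-support binomials. Observing that $f_1,f_2,f_3,f_4$ are exactly the four critical binomials $x_i^{c_i}-x^{\b_i}$ (one per variable, after the permutation allowed in the statement) while $f_5=x_3^{c_3-1}x_1^{c_{21}+1}-x_2x_4^{c_4-1}$ has full support, the theorem amounts to proving two things: that $E_1$ must have precisely these four members with the displayed monomials $x^{\b_i}$, and that $E_2$ consists of the single binomial $f_5$.

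First I would set the Cohen--Macaulay framework. Since $\kk[S]$ is a one-dimensional Cohen--Macaulay domain, $t^{n_1}$ is a non-zero-divisor, and the Cohen--Macaulay type of $S$ equals $|\PF(S)|$, computed as the socle dimension of the Artinian reduction $\kk[S]/(t^{n_1})$. Pseudo-symmetry says this type is $2$, with the two socle generators corresponding to $F(S)/2$ and $F(S)$. I would encode $S$ through $\AP(S,n_1)$ equipped with the order $\preceq$ and translate pseudo-symmetry into an involution $w\mapsto (F(S)+n_1)-w$ on this poset whose unique fixed behaviour is governed by $F(S)/2+n_1$. This symmetry is what forces each critical monomial $x^{\b_i}$ into its displayed shape and yields the rigid exponent identities linking the four critical relations, e.g. that the exponent $c_1$ splits as $c_{21}+(c_1-c_{21}-1)+1$ across $f_2$ and $f_3$, and that $f_4$ recovers the maximal element $F(S)+n_1$.

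Next I would handle $E_2$. A codimension three Cohen--Macaulay ideal of type $2$ has last Betti number $\beta_3=2$, so an Euler-characteristic count against the critical relations rules out the complete intersection case (which would need only three generators) and predicts exactly one extra full-support generator once $S$ is non-complete-intersection. I would produce $f_5$ as the essential syzygy tying $f_2,f_3$ to $f_1,f_4$: concretely, it records the two distinct factorizations of the element $s(\cc)$ witnessed by the monomials $x_3^{c_3-1}x_1^{c_{21}+1}$ and $x_2x_4^{c_4-1}$, which coincide in $S$ precisely because of the pseudo-symmetric relation. Generation of all of $I_S$ then follows by a reduction argument: the critical relations $f_1,\dots,f_4$ lower each exponent below $c_i$, after which the involution matches the remaining remainders and $f_5$ closes the single remaining gap. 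Minimality is a support-and-degree count using the uniqueness afforded by the structured generating set, which simultaneously certifies that $S$ is not a complete intersection exactly when $f_5$ is indispensable.

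The hard part will be the completeness-plus-rigidity step: showing that these five explicit binomials already generate $I_S$ and that the exponents are \emph{forced} to obey the displayed pattern, with $E_2$ of cardinality exactly one. This is where the full strength of pseudo-symmetry is needed, not merely that the type is $2$ but that $\PF(S)=\{F(S)/2,\,F(S)\}$ induces the precise involution on the Apéry poset. Ruling out any further independent full-support relation and excluding alternative type-$2$ syzygy patterns requires a careful case analysis of the factorization poset, and is genuinely more delicate than the symmetric (Gorenstein) situation of Theorem~\ref{B}, where the single pseudo-Frobenius number leaves far less freedom in the syzygies to be eliminated.
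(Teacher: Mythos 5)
A preliminary point: the paper contains no proof of this statement to compare against --- it is quoted as an external classical result from Komeda's 1982 paper, exactly as Theorem~\ref{B} is quoted from Bresinsky. So your proposal must stand on its own, and as written it is a roadmap with its two decisive steps missing rather than a proof.

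The concrete gaps are these. First, your argument that $E_2$ has exactly one element does not work. Since $\kk[S]$ has projective dimension $3$ over $P$, the rank count on the minimal free resolution gives $1-\beta_1+\beta_2-\beta_3=0$; with $\beta_3=2$ (which pseudo-symmetry does give you, via $\PF(S)=\{F(S)/2,F(S)\}$) this yields only $\beta_2=\beta_1+1$ and places no upper bound whatever on $\beta_1=\mu(I_S)$. That $\mu(I_S)=5$ --- equivalently that $E_1$ consists of exactly one critical binomial per variable and $E_2$ of exactly one full-support binomial --- is substantive content of Komeda's theorem, not a consequence of an Euler characteristic; note also that in general the critical ideal $C_S$ may require several critical binomials for the same variable, so even $|E_1|=4$ needs an argument. (Ruling out the complete intersection case, by contrast, is immediate and needs no count: complete intersections are Gorenstein, hence symmetric.) Second, the rigidity of the exponents: you assert that the involution $w\mapsto F(S)+n_1-w$ on $\AP(S,n_1)\setminus\{F(S)/2+n_1\}$ ``forces'' the critical monomials into the displayed interlocking pattern ($c_1$ splitting as $c_{21}+(c_1-c_{21}-1)+1$, $f_4$ recovering the maximal Ap\'ery element, $f_5$ arising as the unique extra syzygy), but no mechanism is supplied; this case analysis of factorizations is precisely the hard part of Komeda's proof, and your final paragraph concedes that it remains to be done. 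A proof that defers both of its essential claims to ``a careful case analysis'' it does not carry out is a plan, not a proof.
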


\begin{prop}\label{KK}
	Let $S$ be a pseudo symmetric numerical semigroup of embedding dimension four. Using the notation of Komeda's Theorem, we have
	\begin{enumerate}
		\item $\AP(S,n_1)$ is a homogeneous set.
		\item For $i=2,3,4$, $\AP(S,n_i)$ is homogeneous if and only if  $\{c_{i-1}n_{i-1}\}\cap\AP(S,n_i)$ is a homogeneous set.
		\item For each $i=2,3,4$, if $f_{i-1}$ is a homogeneous polynomial, then $\AP(S,n_i)$ is a  homogeneous set. The converse holds if $G$ is the only set of minimal generators for $I_S$.
	\end{enumerate}
\end{prop}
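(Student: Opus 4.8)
The plan is to read off the supports of Komeda's five generators $G=\{f_1,\dots,f_5\}$ from Theorem~\ref{K} and to feed this information into Corollary~\ref{homog} and Theorem~\ref{ci}. The key combinatorial fact is that, among the four variables, $f_1$ omits $x_2$, $f_2$ omits $x_3$, and $f_3$ omits $x_4$, while $f_4$ and $f_5$ have full support; thus for each $i\in\{2,3,4\}$ the generator $f_{i-1}$ is the unique element of $G$ whose support does not contain $x_i$. Moreover $x_1$ occurs in each of $f_1,f_2,f_4,f_5$, and (as I check below) also in $f_3$, so $x_1$ lies in the support of every generator. Before using these facts I would record that every critical exponent satisfies $c_k\ge 2$, since $c_k=1$ would express $n_k$ as a combination of the remaining generators, against minimal generation; this makes the exponents $c_2-1,c_3-1,c_4-1$ and $c_{21}+1$ occurring in $f_4,f_5$ positive, so that $f_4,f_5$ indeed have full support.

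For (1), I would show that $x_1$ belongs to the support of every generator. This is immediate for $f_1,f_2,f_4,f_5$; the only delicate case is $f_3=x_3^{c_3}-x_1^{c_1-c_{21}-1}x_2$, where I must rule out $c_1-c_{21}-1=0$. If that exponent vanished, then $c_3n_3=n_2$, and since $c_3\ge 2$ this would place $n_2$ in $\langle n_3\rangle$, contradicting minimality of the generating sequence. Hence $c_1-c_{21}-1>0$ and $x_1\in\supp(f_k)$ for every $k$; in particular $x_1$ lies in the support of all non-homogeneous generators, and Corollary~\ref{homog} yields that $\AP(S,n_1)$ is homogeneous.

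For (2), I would invoke Theorem~\ref{ci}, by which $\AP(S,n_i)$ is homogeneous if and only if $\{c_jn_j\mid j\neq i\}\cap\AP(S,n_i)$ is homogeneous, the $c_j$ being exactly the critical exponents appearing in $f_1,\dots,f_4$. The point is then to collapse this intersection: for each $j\neq i$ with $j\neq i-1$ the critical binomial $f_j$ has $x_i$ in its support, so $c_jn_j$ admits a factorization involving $n_i$, whence $c_jn_j-n_i\in S$ and $c_jn_j\notin\AP(S,n_i)$; for $j=4$ this uses the full support of $f_4$. Consequently $\{c_jn_j\mid j\neq i\}\cap\AP(S,n_i)=\{c_{i-1}n_{i-1}\}\cap\AP(S,n_i)$, and the stated equivalence follows at once.

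For (3), the forward implication is a direct application of Corollary~\ref{homog} with $E=G$: if $f_{i-1}$ is homogeneous, then the unique generator that could fail to contain $x_i$ is homogeneous, so $x_i$ lies in the support of every non-homogeneous element of $G$, whence $\AP(S,n_i)$ is homogeneous. For the converse, assuming $G$ is the only minimal system of generators, Corollary~\ref{homog} forces the generating set witnessing homogeneity to be $G$ itself; since $x_i\notin\supp(f_{i-1})$, the element $f_{i-1}$ cannot be a non-homogeneous generator, and therefore it must be homogeneous. The main obstacle I anticipate is bookkeeping rather than conceptual: making the support analysis airtight, in particular justifying $c_k\ge 2$ and $c_1-c_{21}-1>0$ from minimal generation and confirming that the exponents appearing in Komeda's presentation coincide with the intrinsic critical exponents used in Theorem~\ref{ci}.
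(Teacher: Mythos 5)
Your proposal is correct and takes essentially the same route as the paper's proof: a support analysis of Komeda's five generators fed into Corollary~\ref{homog} for (1) and (3), and into Theorem~\ref{ci} (the paper invokes its specialization, Corollary~\ref{c4}) together with the observation that $c_jn_j\notin\AP(S,n_i)$ whenever $x_i\in\supp(f_j)$ for (2). If anything, you are more careful than the paper, which silently assumes the exponents $c_1-c_{21}-1$ and $c_k-1$ are positive and derives (3) from statement (2), whereas you justify these positivity claims from minimal generation and handle (3) directly via Corollary~\ref{homog} and the uniqueness of the minimal generating set.
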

\begin{proof}
 (1):  Since $x_1\in\supp(f_i)$ for $i=2,3,4$, it follows by Corollary~\ref{homog}.

 \vspace{2mm}
(2): Since $x_i\in\supp(f_j)$ for $i=2,3,4$, $j\neq i-1$, we have $c_in_i\notin\AP(S,n_j)$  for  $i=2,3,4$, $j\neq i-1$. Now the result follows by Corollary~\ref{c4}.

\vspace{2mm}
(3): If $f_{i-1}$ is homogeneous, then  $\AP(S,n_i)$ is homogeneous by the statement~(2). If $f_j$ is  uniquely determined for $j=1,\dots,4$, then  $c_{i-1}n_{i-1}\in\AP(S,n_i)$ for $i=2,3,4$  and the result follows from the statement~(2).
\end{proof}

Similarly to the symmetric case, we have the following families of pseudo symmetric numerical semigroups with embedding dimension four and given multiplicity $m$:

\begin{lem}\label{ps}
	Let $q$ be a positive integer.
	\begin{enumerate}
		\item  If $m=2q+5$, then the numerical semigroup generated by \[\{n_1:=m,n_2:=m+1,n_3:=(q+1)m+q+2,n_4:=(q+1)m+m-1\},\] is a pseudo symmetric numerical semigroup of embedding dimension four and Frobenius number $F(S)=2(q+1)m-2$.
		\item If  $m=2q+4$. Then the numerical semigroup generated by
		\[\{n_1:=m,n_2:=m+1,n_3:=qm+2q+3,n_4:=(q+1)m+q+2\},\] is a pseudo symmetric numerical semigroup of embedding dimension four, with Frobenius number $F(S)=2qm+2q+2$.
	\end{enumerate}
\end{lem}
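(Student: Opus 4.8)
The plan is to treat both parts uniformly, giving the details for (1); part (2) follows by the same method with the parallel (and equally routine) bookkeeping. The strategy is to compute the Ap\'ery set $\AP(S,n_1)$ with respect to the multiplicity $m=n_1$, and from it read off both the Frobenius number and the pseudo-Frobenius numbers of $S$. First I would observe that $S$ is genuinely a numerical semigroup: since $n_1=m$ and $n_2=m+1$ are coprime, $\gcd(n_1,n_2,n_3,n_4)=1$. Because $m$ is the multiplicity, $\AP(S,m)$ consists of exactly $m$ elements, one minimal representative $w_r$ in each residue class $r \bmod m$.

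Next I would determine the $w_r$ by a case analysis organized around the residues of the generators, namely $n_2\equiv 1$, $n_3\equiv q+2$, and $n_4\equiv -1 \pmod m$. The point of the computation is to pin down the two largest members, which for $m=2q+5$ turn out to be $w_{m-1}=n_4=(q+2)m-1$ and $w_{m-2}=n_3+(q+1)n_2=(2q+3)m-2$. From $\max\AP(S,m)=w_{m-2}$ the Frobenius number drops out immediately as $F(S)=\max\AP(S,m)-m=(2q+2)m-2=2(q+1)m-2$, as claimed.

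For pseudo-symmetry I would use the description of the pseudo-Frobenius set $\PF(S)=\{w-m : w\in\Max\AP(S,m)\}$, where $\Max$ is taken with respect to $\preceq$. Since $\AP(S,m)$ is closed downward under $\preceq$ inside $S$, an element $w_r$ is $\preceq$-maximal exactly when $w_r+n_j\notin\AP(S,m)$ for every generator $n_j$. Checking that the only such maximal elements are $w_{m-1}$ and $w_{m-2}$ (every other $w_r$ admits a generator $n_j$ with $w_r+n_j\in\AP(S,m)$, while these two do not) yields $\PF(S)=\{(q+1)m-1,\ (2q+2)m-2\}=\{F(S)/2,\ F(S)\}$. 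As $F(S)$ is even and $\PF(S)=\{F(S)/2,F(S)\}$, $S$ is pseudo-symmetric by the standard characterization (cf.\ \cite{GR}). Finally, to see that the embedding dimension is four I would verify that each of $n_2,n_3,n_4$ lies in $\AP(S,m)$ and is a minimal generator, i.e.\ not a sum of two nonzero elements of $S$ (for instance $n_3-n_1,n_3-n_2\notin S$ and $n_3-n_4<0$), so that $\{n_1,n_2,n_3,n_4\}$ is the minimal generating set.

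The hard part is the explicit determination of $\AP(S,m)$ together with its $\preceq$-maximal elements: this is a finite but somewhat delicate piece of bookkeeping over residue classes mod $m$, and the identification of the exceptional gap $F(S)/2=(q+1)m-1$ as a pseudo-Frobenius number is the crux. Everything else---the coprimality, the Frobenius computation, and minimality of the generators---is routine. Since these are the pseudo-symmetric analogues of the symmetric families of Lemma~\ref{sym}, an alternative would be to extract them directly from the corresponding Ap\'ery-set computations in \cite{GR}.
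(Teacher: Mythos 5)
Your approach is sound and its key formulas are correct, but it is genuinely different from the paper's proof, which consists of a single citation: part (1) is \cite[Lemma 4.29]{GR} and part (2) is \cite[Lemma 4.30]{GR} --- precisely the alternative you mention in your closing sentence, since these families appear in \cite{GR} as explicit constructions of pseudo-symmetric numerical semigroups of embedding dimension four with prescribed multiplicity. Your direct route does work: for (1) the residue-class analysis yields
\[
\AP(S,m)=\{rn_2 \mid 0\le r\le q+1\}\cup\{n_3+rn_2 \mid 0\le r\le q+1\}\cup\{n_4\},
\]
a set of $m=2q+5$ elements whose $\preceq$-maximal elements are exactly $n_4$ and $n_3+(q+1)n_2=(2q+3)m-2$, confirming your claims that $F(S)=2(q+1)m-2$ and $\PF(S)=\{F(S)/2,F(S)\}$, whence pseudo-symmetry by the standard characterization. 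What your route buys is self-containedness plus explicit Ap\'ery data, which the paper in fact leans on later anyway (the proposition following this lemma invokes ``the proof of \cite[Lemma~4.30]{GR}'' for the order-two elements of $\AP(S,m)$); what the paper's citation buys is that it dispenses entirely with the general-$q$ bookkeeping, which you correctly identify as the crux but leave unexecuted, so as written your text is a plan rather than a complete proof until that finite verification is carried out. One caveat on your assertion that part (2) follows ``by the same method with the parallel bookkeeping'': the method transfers, but the answer pattern does not --- for $m=2q+4$ one gets $\AP(S,m)=\{rn_2 \mid 0\le r\le q+1\}\cup\{n_3,2n_3\}\cup\{n_4+rn_2 \mid 0\le r\le q-1\}$, with $\preceq$-maximal elements $(q+1)n_2$ and $2n_3=qn_2+n_4$ (for instance $\Max\AP(\langle 6,7,11,15\rangle,6)=\{14,22\}$), not the analogues of your part (1) formulas, so that computation must be redone rather than transcribed.
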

\begin{proof}
(1) follows from \cite[Lemma 4.29]{GR} and (2) is the subject of \cite[Lemma 4.30]{GR}.
\end{proof}

\begin{ex}
	Let $S=\langle 7,8,17,20\rangle$. It is pseudo symmetric from Lemma~\ref{ps}(1), considering $q=1$. The defining ideal $I_S$ is generated by $\{x_1^4-x_2x_4,x_2^3-x_1x_3,  x_3^2-x_1^2x_4, x_4^2-x_1x_2^2x_3, x_1^3x_2^2-x_3x_4\}$, so that $S$ is homogeneous by Proposition~\ref{KK}. Note that $G(S)$ is not Cohen-Macaulay because of $7+17=3\times 8$ hence $S$ is not of homogeneous type too.
\end{ex}

\begin{ex}
	Let $S=\langle 6,7,11,15\rangle$. Then $S$ is  pseudo symmetric from Lemma~\ref{ps}(2), considering $q=1$. The defining ideal $I_S$ is generated  by $\{ x_1^3-x_2x_3, x_2^3 - x_1x_4, x_3^2 - x_2x_4, x_1^2x_2^2 - x_3x_4, x_1^2x_2x_3-x_4^2\}$.
	So this ordering $6,7,11,15$ is not the permutation that gives the generators in Theorem~\ref{KK}, but  $S$ is homogeneous by Corollary~\ref{homog}.
	As  $6 + 15 = 3 \times 7$, $G(S)$ is not Cohen-Macaulay and so $S$ is not of homogeneous type.
\end{ex}

\begin{ex}
	Let $S=\langle 8,9,23,28 \rangle$. $S$ is pseudo symmetric from Lemma~\ref{ps}(2), considering $q=2$. The defining ideal $I_S$ is generated by $\{f_1=x_1^4-x_2x_3, f_2=x_2^4-x_1x_4,  f_3=x_3^2-x_2^2x_4, f_4=x_4^2-x_1^3x_2x_3, f_5=x_1^3x_2^3-x_3x_4\}$. The non-homogeneous binomial $f_3$ comes from the relation $2n_3=2n_2+n_4=46\in\AP(S,8) $. Hence $S$ is not homogeneous. Note that $G(S)$ is not Cohen-Macaulay because of relation $f_2$ and so $S$ is not of homogeneous type.  Now, considering the permutation $9,23,8,28$, we get 	
	$G=\{x_1^4-x_3x_4,  x_2^2-x_1^2x_4,x_3^4-x_1x_2,x_4^2-x_1x_2x_3^3,x_1^3x_3^3-x_2x_4\}$, as in Theorem~\ref{K} and so $\AP(S,9)$ is homogeneous by Proposition~\ref{KK}.
\end{ex}

These examples can be extended in the following way:

\begin{prop}
	Let $S$ be a pseudo symmetric numerical semigroup of embedding dimension four, with the structure of Lemma~\ref{ps}. If  $m=2q+5$, then $S$ is homogeneous. If $m=2q+4$ and $q>1$, then $S$ is not homogeneous and $G(S)$ is not Cohen-Macaulay.
\end{prop}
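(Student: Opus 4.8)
The plan is to compute explicitly, for each of the two families in Lemma~\ref{ps}, the minimal generating set of $I_S$ following the pattern prescribed by Komeda's Theorem~\ref{K}, and then to read off homogeneity directly from the critical binomials using Proposition~\ref{KK}. This mirrors exactly the strategy used in the symmetric case of Lemma~\ref{sym}. First I would treat the case $m = 2q+5$, with generators $n_1 = m$, $n_2 = m+1$, $n_3 = (q+1)m+q+2$, $n_4 = (q+1)m+m-1$. The goal is to exhibit a concrete set $G$ of five binomials lying in $I_S$ of the form appearing in Theorem~\ref{K}, verify each relation by a direct computation in $\NN$ of the kind
\[
c_1 n_1 = n_3 + (c_4-1)n_4, \quad c_2 n_2 = c_{21}n_1 + n_4, \quad \text{etc.},
\]
determine that the critical exponents $c_i$ are minimal (so that each $c_i n_i$ genuinely lands in the subsemigroup generated by the remaining variables and no smaller multiple does), and confirm via Remark~\ref{UG}-type uniqueness or a direct count that $S$ is not complete intersection so that the fifth relation $f_5$ is forced. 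Once $G$ is identified as the minimal generating set, homogeneity follows by inspecting whether each $f_{i-1}$ is balanced in degree and invoking Proposition~\ref{KK}(3); the expectation is that in this family all the relevant critical binomials turn out homogeneous, giving $\AP(S,n_i)$ homogeneous for each $i$ and hence $S$ homogeneous.

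For the second case $m = 2q+4$ with $q>1$, the same bookkeeping produces the minimal generators, but now the point is to locate one specific critical binomial that is \emph{non}-homogeneous. I would compute the length of the two expressions of a critical element such as $c_3 n_3$ and show they differ; concretely the relation analogous to $2n_3 = 2n_2 + n_4$ observed in the worked example $S=\langle 8,9,23,28\rangle$ should generalize, giving a binomial $x_3^{c_3} - x_2^{c_2-1}x_3^{\,\cdot}\cdots$ whose two sides have different total degree. By Proposition~\ref{KK}(2)--(3) this single non-homogeneous critical binomial, together with the uniqueness of the generating set (or a direct verification that this critical element lies in $\AP(S,n_i)$), forces $\AP(S,n_i)$ to fail homogeneity, so $S$ is not homogeneous. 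For the Cohen-Macaulay failure I would exhibit a torsion element via Proposition~\ref{CM}: identify an element $s \in S$ and $c>0$ with $\ord(s + cn_1) > \ord(s)+c$, most naturally reading it off from a non-critical relation like $f_2$ (as in the examples, the relation $x_2^{c_2}-x_1 x_4$ or similar forces $n_3$ or $n_4$ to be torsion).

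The main obstacle I anticipate is the careful verification that the proposed $G$ is genuinely the \emph{minimal} generating set in the exact permuted form of Komeda's Theorem, and in particular that the critical exponents $c_i$ are the correct minimal ones rather than larger multiples. This requires showing, for each critical element, both that the claimed expression exists and that no expression of smaller length (or no smaller multiple of $n_i$) lands in the complementary subsemigroup; these are finite but delicate semigroup membership arguments of the type $n_2 + n_4 - n_3 \notin S$ used in the proof of the symmetric families. A secondary subtlety is that, as the examples $\langle 6,7,11,15\rangle$ and $\langle 8,9,23,28\rangle$ illustrate, the given ordering $n_1<n_2<n_3<n_4$ need not be the permutation making $G$ appear in Komeda normal form, so I would either reorder the variables explicitly to match Theorem~\ref{K} before applying Proposition~\ref{KK}, or argue homogeneity permutation-independently via Corollary~\ref{homog}. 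Once these membership computations are pinned down, the homogeneity and Cohen-Macaulay conclusions follow mechanically from the cited results.
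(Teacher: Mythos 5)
Your overall strategy --- compute the Komeda normal form of $I_S$ for each family and read homogeneity off the critical binomials via Proposition~\ref{KK} --- is genuinely different from the paper's proof, which for the first family never touches $I_S$ at all; but as written it contains two real gaps.

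For $m=2q+5$ the mechanism you predict is factually wrong. You expect that ``all the relevant critical binomials turn out homogeneous, giving $\AP(S,n_i)$ homogeneous for each $i$.'' Already for $q=1$, i.e.\ $S=\langle 7,8,17,20\rangle$ (the paper's own example of this family), all four critical binomials $x_1^4-x_2x_4$, $x_2^3-x_1x_3$, $x_3^2-x_1^2x_4$, $x_4^2-x_1x_2^2x_3$ are non-homogeneous, and correspondingly the Ap\'ery sets of the non-distinguished generators fail to be homogeneous (e.g.\ $28=4\cdot 7=8+20$ lies in $\AP(S,17)$). What makes $S$ homogeneous is Proposition~\ref{KK}(1): in the Komeda permutation, here $(7,17,8,20)$, the \emph{multiplicity} is the distinguished variable $x_1$, which lies in the support of every minimal generator. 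So to salvage your plan you would have to compute the Komeda form for arbitrary $q$ \emph{and} prove that the multiplicity always occupies that distinguished position; this is the heart of the matter, and your sketch neither does it nor identifies it as the issue. The paper's proof avoids $I_S$ entirely: since $2n_3=2(q+1)m+2q+4>F(S)+m$, the elements $2n_3$, $n_3+n_4$, $2n_4$ cannot lie in $\AP(S,m)$; a one-line check that $c_2n_2\notin\AP(S,m)$ and Corollary~\ref{c4} then give homogeneity by pure arithmetic.

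For $m=2q+4$, $q>1$, the concrete relation you propose does not generalize: $2n_3=2n_2+n_4$ requires $2q^2+q-10=0$, so it holds only for $q=2$. The identity that does generalize is $2n_3=qn_2+n_4$ (both sides equal $2qm+4q+6$ when $m=2q+4$), which has length $q+1\neq 2$. Together with $2n_3\in\AP(S,m)$ --- which itself needs proof; the paper gets it from $2n_3-n_1=2qm+2q+2=F(S)\notin S$ --- this corrected identity would yield non-homogeneity directly from Definition~\ref{D}, with no need for Proposition~\ref{KK} or uniqueness of the generating set. (The paper argues instead via the fact, quoted from the proof of \cite[Lemma~4.30]{GR}, that the only order-two elements of $\AP(S,m)$ are $2n_2$ and $n_2+n_4$, reaching the contradiction $2m=qm+3q+3$.) Your torsion argument for the failure of Cohen--Macaulayness is sound and coincides with the paper's: $n_1+n_4=(q+2)n_2$ exhibits $n_4$ as a torsion element, so Proposition~\ref{CM} applies; note that this step, too, needs no knowledge of $I_S$.
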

\begin{proof}
	Let $m=2q+5$, then $2n_3=2(q+1)m+2q+4>2(q+1)m+2q+3=2(q+1)m-2+m=F(S)+m$. Therefore
	$2n_4>n_3+n_4>2n_3>F(S)+m$ and so $2n_4, n_3+n_4, 2n_3$ are not in $\AP(S,m)$.  This implies that $\{c_3n_3,c_4n_4\}\cap\AP(S,m)$ is empty or has elements of order one and so it is a homogeneous set. If $c_2n_2\in\AP(S,m)$ and $c_2n_2=r_3n_3+r_4n_4$, as $n_3+n_4\notin\AP(S,m)$ we should have $r_3=0$ or $r_4=0$. Then $r_3,r_4\leq 1$ which is not possible. Hence $\AP(S,m)$ is homogeneous by Corollary~\ref{c4}.
	
\medskip
	
	Now, let $m=2q+4$. Then $2n_3-n_1=2(qm+2q+3)-m=2(qm+2q+3)-2q-4=2qm+2q+2=F(S)\notin S$. Hence $2n_3\in\AP(S,n_1)$.  On the other hand, as in the proof of \cite[Lemma~4.30]{GR}, the only elements of order two in  $\AP(S,m)$ belong to $\{2n_2, n_2+n_4\}$. If $2n_3$ is homogeneous, then $2n_3=n_2+n_4$. Hence
	$2qm+4q+6=m+1+(q+1)m+q+2$, which implies that $2m=qm+3q+3$, a contradiction since $q>1$.
	
	For the last statement, note that $n_4+n_1=(q+1)m+q+2+m=(q+2)(m+1)=(q+2)n_2$ and so $n_4$ is a torsion element of $S$.
	\end{proof}

In embedding dimension three, we have classified all numerical semigroups of homogeneous type in to numerical semigroups with complete intersection tangent cones and the homogeneous ones with Cohen-Macaulay tangent cones (cf. Theorem~\ref{dim3}). On other hand, all four generated numerical semigroups of homogeneous type that we have discussed in this section are homogeneous. So it would be natural to look for a similar classification in larger embedding dimensions. Nevertheless, this is not true as the following example with embedding dimension $4$ (provided to us by F. Strazzanti) shows:

\begin{ex}
	Let $S=\langle 7,8,11,12 \rangle$. Then $I_S=(x_2x_4^2-x_1^3x_3, x_2x_3-x_1x_4, x_2^3-x_4^2, x_1x_2^2-x_3x_4, x_1^2x_2-x_3^2, x_1^4-x_2^2x_4)$.  Computing the  free resolutions of both $S$ and $G(S)$ by \cite{DGPS}, we find their  total Betti numbers
	\[\beta_1(R)=\beta_1(G(S))=6, \beta_2(R)=\beta(G(S))=8, \beta_3(R)=\beta(G(S))=3.\]
Therefore $S$ is of homogeneous type and $G(S)$ is not a complete intersection.  We also have $\AP(S,7)=\{0,8,11,12,16,20,24\}$ and that $24 = 8 \times 3 = 12 + 12$, so $S$ is not homogeneous
\end{ex}

\section{Gluing of homogeneous semigroups}

Throughout this section  $S_1$ and $S_2$  are two numerical semigroups generated by $m_1<\cdots<m_d$
 and $n_1<\cdots<n_k$, respectively.
 Let $p\in S_1$ and $q\in S_2$ be two positive integers  satisfying $\gcd(p,q)=1$ with $p\notin\{m_1,\ldots,m_d\}$ and
 $q\notin\{n_1,\ldots,n_k\}$. The numerical semigroup   $$S=<qm_1,\ldots,qm_d,pn_1,\ldots,pn_k>$$ is called a \emph{gluing} of $S_1$ and $S_2$ (cf. \cite{R}).
An \emph{extension} of $S_1$, is a gluing $S$ with $S_2=\NN$~(cf. \cite{AM}).

In the rest of this section, $S$ will denote the above gluing of $S_1$ and $S_2$.

\begin{dfn}\label{G}  Let $S$ be a gluing of $S_1$ and $S_2$.
	\begin{enumerate}
		\item $S$ is called  nice gluing,  if $q=an_1$ for some $1<a\leq \ord_{S_1}(p)$ (cf. \cite{AMS}).
		\item $S$ is called  specific gluing,   if $\ord_{S_2}(q)+l_q(S_2)\leq \ord_{S_1}(p)$, where
		$l_q(S_2)=\max\{\ord_{S_2}(s+q)-\ord_{S_2}(q)-\ord_{S_2}(s) ; s\in S_2\}$  (cf. \cite{JZ}).
	\end{enumerate}
\end{dfn}

\begin{rem}\label{m}
	It is easy to see from the definition that the set $$\{qm_1,\ldots,qm_d,pn_1,\ldots,pn_k\}$$ is a minimal system of
	generators of $S$ and $m(S)=\min\{qm_1,pn_1\}$. If $S$ is a nice  gluing of $S_1$ and $S_2$, then $qm_1=an_1m_1\leq
	\ord_{S_1}(p)n_1m_1\leq pn_1$ and so $m(S)=qm_1$. This is also the case when $S$ is a specific gluing, from \cite[Corollary~3.14]{JZ}.
\end{rem}

\begin{rem}\label{glu}
We consider $I_{S_1}$ as an ideal of $\kk[x_1,\ldots,x_d]$ and $I_{S_2}$ an ideal of $\kk[y_1,\ldots,y_k]$.
Then $I_S=I_{S_1}+I_{S_2}+\langle x^\p-y^\q\rangle$ is an ideal of $\kk[x_1,\dots,x_d,y_1,\dots,y_k]$, where $\p$ and $\q$ are, respectively, some  factorizations of $p$ and $q$ (cf.~\cite[Theorem~1.4]{R}).
\end{rem}

\begin{prop}\label{Apery}
	The following statements hold for all $s\in S_1$.
	\begin{enumerate}
		\item $\AP(S,qs)=\{qz_1+pz_2 \ ; \ z_1\in\AP(S_1,s) , z_2\in\AP(S_2,q)\}$.
		\item If $qz_1+pz_2\in\AP(S,qs)$, then $z_1\in\AP(S_1,s)$.
		\item If $p\notin\AP(S_1,s)$, then for all $z\in\AP(S,qs)$ there exist unique $z_1\in S_1$ and $z_2\in S_2$ such that $z=qz_1+pz_2$.
		\item If $qz_1+pz_2\in\AP(S,qs)$ and $p\notin\AP(S_1,s)$, then  $z_2\in\AP(S_2,q)$.
	\end{enumerate}
\end{prop}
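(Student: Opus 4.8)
The plan is to reduce everything to the structural description of the gluing. By Remark~\ref{m} the set $\{qm_1,\dots,qm_d,pn_1,\dots,pn_k\}$ generates $S$, so every $z\in S$ can be written as $z=qw_1+pw_2$ with $w_1\in S_1$ and $w_2\in S_2$; this gives existence of representations for free. The ambiguity of such expressions is governed by the gluing relation $qp=pq$ of Remark~\ref{glu}: if $qw_1+pw_2=qw_1'+pw_2'$ then, since $\gcd(p,q)=1$, there is an integer $\ell$ with $w_1-w_1'=p\ell$ and $w_2'-w_2=q\ell$. I will also use the counting fact recalled in Section~2, namely that for a numerical semigroup $S'$ and positive $n\in S'$ the set $\AP(S',n)$ has exactly one element in each residue class modulo $n$, so $|\AP(S',n)|=n$; applied to $S,S_1,S_2$ (all with $\gcd=1$) this yields $|\AP(S,qs)|=qs$, $|\AP(S_1,s)|=s$, and $|\AP(S_2,q)|=q$.

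Statement (1) is the main point, and I would prove it by an inclusion-plus-counting argument. For the inclusion $\supseteq$, given $z_1\in\AP(S_1,s)$ and $z_2\in\AP(S_2,q)$, I assume for contradiction that $qz_1+pz_2-qs\in S$, write it as $qw_1+pw_2$, and deduce $q(z_1-s-w_1)=p(w_2-z_2)$; setting $z_1-s-w_1=p\ell$ and $w_2-z_2=q\ell$, the case $\ell\ge 0$ forces $z_1-s=w_1+p\ell\in S_1$ (contradicting $z_1\in\AP(S_1,s)$), while the case $\ell<0$ forces $z_2-q\in S_2$ (contradicting $z_2\in\AP(S_2,q)$). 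Then I would show the map $(z_1,z_2)\mapsto qz_1+pz_2$ is injective on $\AP(S_1,s)\times\AP(S_2,q)$: equal images give $q(z_1-z_1')=p(z_2'-z_2)$, hence $q\mid(z_2'-z_2)$, so $z_2$ and $z_2'$ lie in the same residue class modulo $q$ and, being elements of $\AP(S_2,q)$, must coincide, whence $z_1=z_1'$. Injectivity together with the cardinalities gives $|\{qz_1+pz_2\}|=sq=|\AP(S,qs)|$, so the inclusion is forced to be an equality.

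The remaining statements are short. For (2), if $z=qz_1+pz_2\in\AP(S,qs)$ but $z_1-s\in S_1$, then $z-qs=q(z_1-s)+pz_2\in S$, a contradiction. For (3), existence is immediate from the generators, and for uniqueness I use the hypothesis $p\notin\AP(S_1,s)$, i.e.\ $p-s\in S_1$: from two representations I get $z_1-z_1'=p\ell$, and if $\ell\ge 1$ then $z_1-s=(z_1-p)+(p-s)\in S_1$ (the case $\ell\le -1$ being symmetric), forcing $z-qs\in S$ and contradicting $z\in\AP(S,qs)$, so $\ell=0$. For (4), assuming $z_2-q\in S_2$ I absorb the gluing relation, rewriting $z=qz_1+pz_2=q(z_1+p)+p(z_2-q)$ using $pq=qp$ and $p\in S_1$; this is a second valid $S_1\times S_2$-representation, which contradicts the uniqueness from (3) since $z_1\neq z_1+p$.

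The main obstacle is statement (1), and within it the injectivity step: the cleanest route is the observation that two distinct members of $\AP(S_2,q)$ cannot be congruent modulo $q$, which collapses the gluing ambiguity entirely and lets the counting argument close. Care is also needed in the inclusion argument to treat both signs of the auxiliary integer $\ell$, since the two signs contradict membership in the two different Ap\'ery sets $\AP(S_1,s)$ and $\AP(S_2,q)$, respectively.
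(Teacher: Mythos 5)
Your proof is correct, and it is more self-contained than the paper's. For parts (1) and (2) the paper gives no argument at all: it simply cites \cite[Proposition 3.8]{JZ}, and then deduces (3) by writing $z=qz_1+pz_2$ with $z_1\in\AP(S_1,s)$ and $z_2\in\AP(S_2,q)$ (using (1)), noting that any second representation satisfies $z_1=z_1'+\alpha p$ by coprimality, that both $z_1,z_1'$ lie in $\AP(S_1,s)$ by (2), and that this forces $\alpha=0$; part (4) is then obtained from (1) and (3). Your treatment of (3) is the same coprimality bookkeeping (your $\ell$ is the paper's $\alpha$, and your use of $p-s\in S_1$ makes explicit a step the paper leaves terse), and your (4) exhibits, via the rewriting $qz_1+pz_2=q(z_1+p)+p(z_2-q)$, the concrete second representation that the paper rules out abstractly --- with the small bonus that you never need (1) for this step. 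The genuine difference is your proof of (1): the containment $\supseteq$ by the signed analysis of $\ell$ (each sign violating membership in one of the two Ap\'ery sets), injectivity of $(z_1,z_2)\mapsto qz_1+pz_2$ on $\AP(S_1,s)\times\AP(S_2,q)$ because distinct elements of $\AP(S_2,q)$ are never congruent modulo $q$, and then the count $|\AP(S_1,s)|\cdot|\AP(S_2,q)|=sq=|\AP(S,qs)|$, valid because $S_1$, $S_2$ and $S$ are all numerical semigroups and $qs\in S$. This buys a complete, elementary argument inside the paper and in fact establishes slightly more than (1), namely that the addition map is a bijection from $\AP(S_1,s)\times\AP(S_2,q)$ onto $\AP(S,qs)$; its only limitation is that the counting step leans on the finiteness of Ap\'ery sets (i.e., on the gcd-one hypotheses), which is harmless in the present setting. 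The degenerate case $s=0$, where all the sets involved are empty, is trivially consistent with your argument.
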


\begin{proof}
	The statements 	(1) and (2) are the subject of \cite[Proposition 3.8]{JZ}.
	For the proof of (3), note that $z=qz_1+pz_2$ for some $z_1\in\AP(S_1,s)$ and $z_2\in\AP(S_2,q)$, by (1). If there exist another $z'_1\in S_1$ and $z'_2\in S_2$ with
	$z=qz'_1+pz'_2$, then  $z_1=z'_1+\alpha p$ for some integer $\alpha$, as 	$\gcd(p,q)=1$.  On the other hand, $z'_1$ and $z_1$ belong to $\AP(S_1,s)$ by (2). Hence $\alpha=0$ and so $z_1=z'_1$ and  $z'_2=z_2$.
	
	The statement (4) follows by (1) and (3).
\end{proof}

\begin{cor}\label{HS}
	If $\AP(S,qs)$ is homogeneous for some $s\in S_1$, then $\AP(S_1,s)$ and $\AP(S_2,q)$ are  also homogeneous.
\end{cor}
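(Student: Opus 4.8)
The plan is to deduce both assertions directly from Proposition~\ref{Apery}(1), using the elementary observation that factorizations in each component semigroup lift, length-preservingly, to factorizations in the glued semigroup $S$. Throughout write $\mathcal{L}_{S_1}$, $\mathcal{L}_{S_2}$, $\mathcal{L}_S$ for the length sets computed in $S_1$, $S_2$, $S$ respectively, and recall that $0$ lies in every Ap\'ery set $\AP(S_i,u)$ with $u\neq 0$, since $0-u\notin S_i$.

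First I would show $\AP(S_1,s)$ is homogeneous. Fix a nonzero $z_1\in\AP(S_1,s)$. Taking $z_2=0$ in Proposition~\ref{Apery}(1) gives $qz_1=qz_1+p\cdot 0\in\AP(S,qs)$, and $qz_1\neq 0$. Every factorization $z_1=\sum_{i=1}^d r_i m_i$ in $S_1$ yields the factorization $qz_1=\sum_{i=1}^d r_i(qm_i)$ in $S$ of the same length $\sum_i r_i$, so $\mathcal{L}_{S_1}(z_1)\subseteq \mathcal{L}_S(qz_1)$. Since $\AP(S,qs)$ is homogeneous and $qz_1$ is a nonzero element of it, $\mathcal{L}_S(qz_1)$ is a singleton; hence so is the nonempty subset $\mathcal{L}_{S_1}(z_1)$. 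As $z_1$ was arbitrary, $\AP(S_1,s)$ is homogeneous.

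The argument for the moreover clause is symmetric, multiplying by $p$ instead of $q$. Fix a nonzero $z_2\in\AP(S_2,q)$; taking $z_1=0$ in Proposition~\ref{Apery}(1) places $pz_2\in\AP(S,qs)$, and each factorization $z_2=\sum_{j=1}^k t_j n_j$ in $S_2$ lifts to $pz_2=\sum_{j=1}^k t_j(pn_j)$ in $S$ of the same length, giving $\mathcal{L}_{S_2}(z_2)\subseteq\mathcal{L}_S(pz_2)$; homogeneity of $\AP(S,qs)$ then forces $\mathcal{L}_{S_2}(z_2)$ to be a singleton. The hypothesis $p\notin\AP(S_1,s)$ is precisely what ensures, through Proposition~\ref{Apery}(3)--(4), that the decomposition $z=qz_1+pz_2$ of an element of $\AP(S,qs)$ is unique with $z_2\in\AP(S_2,q)$, which is the conceptual reason the second Ap\'ery set becomes controllable; in the streamlined version above it enters only implicitly, since I use the special decomposition coming from $z_1=0$.

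There is no genuine obstacle here: the entire content is the length-preserving lift of factorizations combined with the inclusion of length sets in the correct direction, $\mathcal{L}_{S_i}\subseteq\mathcal{L}_S$, so that the singleton property descends from $S$ to each component. The only points needing care are verifying that $0$ belongs to the relevant Ap\'ery set (so that Proposition~\ref{Apery}(1) applies with a vanishing component) and that the lifted element $qz_1$, respectively $pz_2$, is nonzero, both of which are immediate from $z_1\neq 0$, respectively $z_2\neq 0$, and $p,q>0$.
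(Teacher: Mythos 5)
Your proof is correct and takes essentially the approach the paper intends: Corollary~\ref{HS} is stated as an immediate consequence of Proposition~\ref{Apery}, namely the containment ``$\supseteq$'' of part~(1) applied with $z_2=0$ (resp.\ $z_1=0$) combined with the length-preserving lift of factorizations from $S_1$ (resp.\ $S_2$) into $S$, which is exactly your argument. One remark: contrary to your closing comments, your proof of the second assertion never uses the hypothesis $p\notin\AP(S_1,s)$, not even implicitly via parts (3)--(4); this is harmless --- you simply prove a formally stronger statement, which is consistent with Theorem~\ref{thm-glu} because $\AP(S_2,q)\subseteq\AP(S_2,nq)$ and any subset of a homogeneous set is homogeneous --- the hypothesis in the paper's formulation reflects the uniqueness statements (3)--(4) rather than anything needed for this implication.
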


The following example shows that the gluing of two homogeneous numerical semigroup is not necessarily homogeneous.

\begin{ex}
	Let  $S:=\<15,21,28\>$. Then $S$ is an  extension  of $S_1=\<5,7\>$ with $q=3$ and $p=28$, but $S$ is not homogeneous from Example~\ref{example}.
\end{ex}

The following result is the key for our study of the homogeneity of a gluing:

\begin{thm}\label{thm-glu}
	Let $S$ be a gluing of $S_1$ and $S_2$,  $s\in S_1$ and $n=\min\{n\in\NN ; np\notin\AP(S_1,s)\}$. Then the following statements are equivalent.
	\begin{enumerate}
		\item $\AP(S, qs)$ is homogeneous.
		\item  $\AP(S_1,s)$ and $\AP(S_2,nq)$ are homogeneous, and if  $n>1$, then $\ord_{S_1}(p)=\ord_{S_2}(q)$.
	\end{enumerate}
\end{thm}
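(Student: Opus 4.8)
The theorem characterizes homogeneity of $\AP(S,qs)$ for a gluing in terms of data on the two factor semigroups $S_1$ and $S_2$, with the invariant $n=\min\{n\in\NN ; np\notin\AP(S_1,s)\}$ playing the central role. My strategy is to exploit the explicit description of the Ap\'ery set of a gluing given in Proposition~\ref{Apery}, namely $\AP(S,qs)=\{qz_1+pz_2 ; z_1\in\AP(S_1,s), z_2\in\AP(S_2,q)\}$, together with the length/order bookkeeping that the gluing relation $x^\p-y^\q$ imposes. The key quantitative fact I would isolate first is how lengths in $S$ decompose: if $w=qz_1+pz_2$ with $z_1\in S_1$ and $z_2\in S_2$, then a factorization of $w$ in $S$ breaks into a factorization of $z_1$ in $S_1$ (scaled by $q$) and a factorization of $z_2$ in $S_2$ (scaled by $p$), but the gluing relation allows trading $\ord_{S_2}(q)$ units coming from a copy of $q$ against $\ord_{S_1}(p)$ units coming from a copy of $p$. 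This is exactly where the quantity $n$ and the comparison $\ord_{S_1}(p)=\ord_{S_2}(q)$ will enter.

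\emph{Proof outline.} For the implication $(1)\Rightarrow(2)$, I would argue as follows. Corollary~\ref{HS} already gives that $\AP(S_1,s)$ is homogeneous whenever $\AP(S,qs)$ is, so the first half is free. For the homogeneity of $\AP(S_2,nq)$, I would use that an element of $\AP(S_2,nq)$ can be lifted: writing $z_2\in\AP(S_2,nq)$ as $z_2=z_2'+jq$ with $0\le j<n$ and $z_2'\in\AP(S_2,q)$, and pairing with the element $(n-1)p\in\AP(S_1,s)$ (legitimate by minimality of $n$), I would produce an element of $\AP(S,qs)$ whose two expressions of differing length would force two differing lengths in $z_2$, contradicting homogeneity of $\AP(S,qs)$. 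The condition $\ord_{S_1}(p)=\ord_{S_2}(q)$ when $n>1$ I would extract by comparing the length of the factorization of $qs'+p\cdot(\text{something})$ obtained by substituting the relation $x^\p-y^\q$: if the two orders differed, the element $q z_1+p\cdot 0$ versus the rewriting using $n$ copies of $p$ would give non-maximal expressions in $\AP(S,qs)$.

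For the converse $(2)\Rightarrow(1)$, I would take an arbitrary $w\in\AP(S,qs)$ with two factorizations in $S$ and show they have equal length. Using Remark~\ref{glu}, any factorization of $w$ is obtained from a ``split'' factorization $(\alpha,\beta)$ over $S_1\times S_2$ by repeatedly applying the gluing relation (replacing $\p$ by $\q$ or vice versa). The number of such applications is controlled by how many copies of $p$ the $S_1$-part carries, which is bounded by $n-1$ since $w\in\AP(S,qs)$ forces the $S_1$-component into $\AP(S_1,s)$ by Proposition~\ref{Apery}(2); the hypotheses then guarantee that each application of the relation preserves total length (because $\ord_{S_1}(p)=\ord_{S_2}(q)$ in the relevant range, or because $n=1$ means the relation is never used), and homogeneity of $\AP(S_1,s)$ and $\AP(S_2,nq)$ pins down the $S_1$- and $S_2$-parts to unique lengths. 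Combining these gives that all factorizations of $w$ share a common length.

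\emph{Main obstacle.} The delicate point is the careful case analysis around $n$: the quantity $\ord_{S_2}(q)$ versus $\ord_{S_1}(p)$ governs whether rewriting a block of $p$'s into $q$'s (or conversely) changes the total factorization length, and one must track this trade uniformly across all $0\le j<n$ copies. I expect the hardest step to be showing, in $(2)\Rightarrow(1)$, that homogeneity of $\AP(S_2,nq)$ (rather than merely $\AP(S_2,q)$) is exactly the right hypothesis to control the $S_2$-part after up to $n-1$ applications of the gluing relation have shifted some weight into the $S_2$-factor; establishing the clean bijection between split factorizations and genuine $S$-factorizations, and verifying length-preservation at each step, is where the essential combinatorial bookkeeping of the gluing concentrates.
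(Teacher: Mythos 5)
Your outline is correct, and in the direction (2)$\Rightarrow$(1) it takes a genuinely different route from the paper's proof. In (1)$\Rightarrow$(2) the two arguments are close variants: you lift elements \emph{into} $\AP(S,qs)$ via Proposition~\ref{Apery}(1) --- writing $z_2\in\AP(S_2,nq)$ as $z_2'+jq$ with $z_2'\in\AP(S_2,q)$, $j<n$, so that $pz_2=q(jp)+pz_2'$ lies in $\AP(S,qs)$, and noting that $qp=q\cdot p+p\cdot 0\in\AP(S,qs)$ when $n>1$, which forces $\ord_{S_1}(p)=\ord_{S_2}(q)$ --- whereas the paper argues contrapositively, using $\gcd(p,q)=1$ to show that $py\notin\AP(S,qs)$ implies $y\notin\AP(S_2,nq)$, and that $qp\notin\AP(S,qs)$ implies $n=1$. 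The substantive difference is in (2)$\Rightarrow$(1): the paper never analyzes factorizations of a general element of $\AP(S,qs)$; instead it builds a binomial generating set of $I_S$ starting from $E_1\cup E_2\cup\{x^{\p}-y^{\q}\}$ (Remark~\ref{glu}), replaces each non-homogeneous $g\in E_2$ having a term divisible by $y^{n\q}$ by $h:=x^{n\p}y^{\d}-y^{\b}$ using the gluing relation, and then invokes the ideal-theoretic criterion of Proposition~\ref{homog-prop}. You instead work entirely inside the semigroup: every factorization of $w\in\AP(S,qs)$ splits through a pair $(z_1,z_2)$ with $qz_1+pz_2=w$; the possible pairs form a chain differing by $(p,-q)$ because $\gcd(p,q)=1$; Proposition~\ref{Apery}(2) puts $z_1\in\AP(S_1,s)$; and homogeneity of the two Ap\'ery sets plus $\ord_{S_1}(p)=\ord_{S_2}(q)$ (needed only when the chain has at least two members, which forces $p\in\AP(S_1,s)$, i.e.\ $n>1$) makes $\ord_{S_1}(z_1)+\ord_{S_2}(z_2)$ constant along the chain. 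Your route is more elementary and self-contained, avoiding Proposition~\ref{homog-prop} altogether; the paper's route has the side benefit of producing an explicit generating set of $I_S$ of the kind it reuses later (e.g.\ Lemma~\ref{glu.Grobner}). The one step you flagged as delicate does need to be supplied, but it is short: to apply homogeneity of $\AP(S_2,nq)$ you must show the $S_2$-component of \emph{every} splitting of $w$ lies in $\AP(S_2,nq)$; indeed, if $z_2-nq\in S_2$ then $(z_1+np,\,z_2-nq)$ is another splitting of $w$ whose first component is not in $\AP(S_1,s)$ (since $np\notin\AP(S_1,s)$), contradicting Proposition~\ref{Apery}(2). With that in hand, $\ord_{S_1}(z_1+p)=\ord_{S_1}(z_1)+\ord_{S_1}(p)$ and $\ord_{S_2}(z_2)=\ord_{S_2}(z_2-q)+\ord_{S_2}(q)$ follow from homogeneity, and your length computation closes the proof.
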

\begin{proof}
	(1)$\Rightarrow$(2): If $\AP(S,qs)$ is homogeneous, then $\AP(S_1,s)$ is homogeneous by Corollary~\ref{HS}. Let $y\in\AP(S_2,nq)$ with two expressions with different lengths.
	Then $py\notin\AP(S,qs)$ and so we can  write $py=qs+qs_1+pz_2$ for some $s_1\in S_1$ and $z_2\in S_2$. Let $z_1:=s+s_1$, then $z_1\notin\AP(S_1,s)$ and $py=qz_1+pz_2$.
	 As $\gcd(p,q)=1$, we have $z_1=\alpha p$ and $y=z_2+\alpha q$ for some integer $\alpha\geq 0$. Note that $\alpha\geq n$, since $\alpha p=z_1\notin\AP(S_1,s)$. Hence $y=z_2+\alpha q\notin\AP(S_2,nq)$, a contradiction.
	
	\vspace{2mm}
	Now, assume that  $\ord_{S_1}(p)\neq\ord_{S_2}(q)$. Then $\{qp\}$ is not a homogeneous set of $S$ and so $qp\notin\AP(S,qs)$.  Note that  $qp=qs+qs_1+pz_2$ for some $s_1\in S_1$ and $z_2\in S_2$. Setting $z_1:=s+s_1$ we get $qp=qz_1+pz_2$ and $z_1\notin\AP(S_1,s)$.
The condition $\gcd(p,q)=1$ implies that $z_1=ap$ and $z_2=bq$ for some non-negative integers $a$ and $b$. Hence $qp=(a+b)qp$ and so $a+b=1$. Since $z_1\notin\AP(S_1,s)$, we get $a=1$ and $b=0$. Therefore $p=z_1$ is not in $\AP(S_1,s)$ i.e. $n=1$.
	
	\vspace{2mm}
	(2)$\Rightarrow$(1): Let $\p$, $\q$ and $\a$ be factorizations of $p$, $q$ and $s$, respectively and let $E_1$ and $E_2$ be minimal generating sets for $I_{S_1}$ and $I_{S_2}$, respectively, as the ones in Proposition~\ref{homog-prop}(2).
	Now, $E=E_1\cup E_2\cup\{x^\p-y^\q\}$ is a generating set for $I_S$ by Remark~\ref{glu}. Note that  one term of each  non-homogeneous binomial $f\in E_1$ is divisible by $x^\a$, and any non-homogeneous binomial $g=y^\cc-y^\b\in E_2$ has one term divided by $y^{n\q}$. Assume that $y^\cc=y^{n\q}y^\d$. Then  $h:=x^{n\p}y^\d-y^\b\in I_S$ and $g$ is generated by $h$ and $x^\p-y^\q$. Therefore replacing $g$ by $h$, we get again a generating set for $I_S$. Continuing in this way, we get a generating set for $I_S$ which satisfies the property of
	Proposition~\ref{homog-prop}(3), since $np\notin\AP(S_1,s)$. Note that, if $x^\p-y^\q$ is non-homogeneous, then $n=1$ which means that $p\notin\AP(S_1,s)$.
	\end{proof}

As a consequence we obtain the necessary and sufficient conditions for an extension of a homogeneous numerical semigroup to be homogeneous:

\begin{cor}\label{pnotinAp}
	Let $S_1$ be a homogeneous numerical semigroup and $S_2=\NN$. If $qm_1<p$, then $S$ is homogeneous if and only if  one  of the following conditions hold.
	\begin{enumerate}
		\item $q=\ord_{S_1}(p)$.
		\item $p\notin\AP(S_1,m_1)$.
	\end{enumerate}
\end{cor}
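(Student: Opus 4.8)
The plan is to deduce this directly from Theorem~\ref{thm-glu} by specializing to $s = m_1$ and $S_2 = \NN$, and then to read off the two alternatives. First I would record the features of $\NN$ that make the general criterion collapse: since $S_2 = \NN$ is generated by $n_1 = 1$, every element of $\NN$ has a single factorization, so $\AP(S_2, k) = \{0, 1, \dots, k-1\}$ is homogeneous for every $k$, and $\ord_{S_2}(q) = q$. Taking $s = m_1$ one has $qs = qm_1$, which by Remark~\ref{m} equals $m(S)$; thus ``$S$ is homogeneous'' is exactly the homogeneity of $\AP(S, qm_1)$ in the sense of Definition~\ref{D}.

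Next I would apply Theorem~\ref{thm-glu} with this $s$. Set $n = \min\{n \in \NN : np \notin \AP(S_1, m_1)\}$. Because $S_1$ is homogeneous, $\AP(S_1, m_1)$ is homogeneous, and $\AP(S_2, nq)$ is homogeneous automatically by the previous paragraph; hence the only surviving part of condition~(2) of that theorem is the order requirement, namely that if $n > 1$ then $\ord_{S_1}(p) = \ord_{S_2}(q) = q$. Consequently $\AP(S, qm_1)$ is homogeneous if and only if either $n = 1$ or $\ord_{S_1}(p) = q$.

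It then remains to translate the quantity $n$ into the membership statement in the corollary. Here $n = 1$ if and only if $p \notin \AP(S_1, m_1)$, which is condition~(2); and if $p \in \AP(S_1, m_1)$, then $n > 1$, so homogeneity forces $\ord_{S_1}(p) = q$, which is condition~(1). A short Boolean check shows that the disjunction ``$n = 1$ or $\ord_{S_1}(p) = q$'' coincides with ``(1) or (2)'': the only case to reconcile is $p \notin \AP(S_1, m_1)$ together with $\ord_{S_1}(p) = q$, which is already absorbed into~(2). This would complete the equivalence.

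The real work is carried by Theorem~\ref{thm-glu}; the points demanding care in this corollary are the two $\NN$-specific simplifications (that $\AP(S_2, nq)$ is always homogeneous and that $\ord_{S_2}(q) = q$), the identification $qm_1 = m(S)$ from Remark~\ref{m} that lets us speak of homogeneity of $S$ through $\AP(S, qm_1)$, and the precise determination of $n$ via the test $p \in \AP(S_1, m_1)$. I expect the only genuinely delicate step to be matching ``$n = 1$ or $\ord_{S_1}(p) = q$'' with the stated alternatives~(1) and~(2), i.e. making sure the two disjunctions agree on the nose once the membership $p \in \AP(S_1, m_1)$ is untangled.
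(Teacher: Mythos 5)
Your route is exactly the paper's own: the corollary appears there with no separate proof, as an immediate specialization of Theorem~\ref{thm-glu} at $s=m_1$ with $S_2=\NN$. Your two $\NN$-specific simplifications are correct (every $k\in\NN$ has the unique factorization $k=k\cdot 1$, so every Ap\'ery set of $\NN$ is homogeneous and $\ord_{S_2}(q)=q$), and so is the Boolean bookkeeping: condition (2) of the theorem collapses to ``$n=1$ or $\ord_{S_1}(p)=q$'', and $n=1$ holds exactly when $p\notin\AP(S_1,m_1)$.

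The one step that does not hold as you state it is the identification ``$qm_1=m(S)$ by Remark~\ref{m}''. Remark~\ref{m} gives $m(S)=\min\{qm_1,pn_1\}=\min\{qm_1,p\}$ for an arbitrary gluing, and asserts $m(S)=qm_1$ only for nice or specific gluings; an extension as in this corollary need be neither, and $p<qm_1$ is possible. This matters because Definition~\ref{D} declares $S$ homogeneous via the Ap\'ery set of its \emph{smallest} generator, so when $p<qm_1$ your argument only establishes homogeneity of $\AP(S,qm_1)$, not of $S$. Concretely, take $S_1=\langle 5,7\rangle$, $p=12=5+7$, $q=5$, so $S=\langle 12,25,35\rangle$: condition (2) holds since $12\notin\AP(S_1,5)$, and Theorem~\ref{thm-glu} does make $\AP(S,25)$ homogeneous; but $m(S)=12$, and $175=7\cdot 25=5\cdot 35$ lies in $\AP(S,12)$ (one checks $163\notin S$), so $\AP(S,12)$ is not homogeneous and $S$ fails Definition~\ref{D}. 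Thus the identification---and, read literally, the corollary itself---requires the extra hypothesis $qm_1<p$, i.e.\ that $qm_1$ is the multiplicity of $S$. To be fair, this imprecision is inherited from the paper rather than introduced by you: in every use the paper makes of this corollary (Lemma~\ref{3}, Proposition~\ref{d}, and the corollary on homogeneous type that follows it) one has $q\leq\ord_{S_1}(p)$, which forces $qm_1\leq\ord_{S_1}(p)\,m_1\leq p$ (equality being excluded by $\gcd(p,q)=1$), so the identification is harmless there. Your proof becomes correct once you either add this hypothesis explicitly or state that homogeneity of $S$ is measured on $\AP(S,qm_1)$.
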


\begin{cor}
	Let $S_1$ be a homogeneous numerical semigroup with Cohen-Macaulay tangent cone and $S_2=\NN$. For each positive integer $q$, if  $p\in S_1\setminus\AP(S_1,m_1)$ with $\ord_{S_1}(p)\geq q$, then $S$ is of homogeneous type.
\end{cor}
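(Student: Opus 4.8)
The plan is to deduce the statement from Theorem~\ref{homogeneous type}, which asserts that a homogeneous numerical semigroup with Cohen-Macaulay tangent cone is of homogeneous type. Thus it suffices to check, for the extension $S$, the two hypotheses of that theorem: that $S$ is homogeneous and that $G(S)$ is Cohen-Macaulay.

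The homogeneity of $S$ is immediate. By assumption $S_1$ is homogeneous and $p\in S_1\setminus\AP(S_1,m_1)$, that is, $p\notin\AP(S_1,m_1)$. This is precisely condition~(2) of Corollary~\ref{pnotinAp}, and therefore $S$ is homogeneous.

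The substantive step, which I expect to be the main obstacle, is the Cohen-Macaulayness of $G(S)$; here the hypothesis $\ord_{S_1}(p)\geq q$ is the key. I would first observe that this inequality means exactly that $S$ is a specific gluing in the sense of Definition~\ref{G}(2). Indeed, with $S_2=\NN=\langle 1\rangle$ one has $\ord_{S_2}(q)=q$ and $l_q(S_2)=\max_{s\in\NN}\{\ord_{S_2}(s+q)-\ord_{S_2}(q)-\ord_{S_2}(s)\}=0$, so the defining inequality $\ord_{S_2}(q)+l_q(S_2)\leq\ord_{S_1}(p)$ collapses to $q\leq\ord_{S_1}(p)$, which holds by hypothesis. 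Since $G(S_1)$ is Cohen-Macaulay by assumption and $G(\NN)\cong\kk[x]$ is trivially Cohen-Macaulay, I would then invoke the preservation of the Cohen-Macaulay property of the tangent cone under specific gluings from \cite{JZ} (the same circle of results underlying Definition~\ref{G}(2) and the formula $m(S)=qm_1$ in Remark~\ref{m}) to conclude that $G(S)$ is Cohen-Macaulay.

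Combining the two steps, $S$ is homogeneous with Cohen-Macaulay tangent cone, and Theorem~\ref{homogeneous type} gives that $S$ is of homogeneous type. Should one prefer to stay entirely within the present machinery rather than cite the specific-gluing result, the Cohen-Macaulay step could alternatively be carried out by verifying $T(S)=\emptyset$ through Proposition~\ref{CM}: one tracks the order of a typical element $qz_1+pz_2$ using the Ap\'ery description of Proposition~\ref{Apery}, and uses $\ord_{S_1}(p)\geq q$ to exclude any increase of order upon adding multiples of the multiplicity $m(S)=qm_1$, which is exactly the assertion that no element of $S$ is torsion.
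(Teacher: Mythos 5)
Your proposal is correct and follows essentially the same route as the paper: homogeneity of $S$ via Corollary~\ref{pnotinAp}(2), Cohen-Macaulayness of $G(S)$ by recognizing $S$ as a specific extension of $S_1$ (the hypothesis $\ord_{S_1}(p)\geq q$ being exactly the specific-gluing condition when $S_2=\NN$) and citing the preservation result of \cite{JZ}, then concluding with Theorem~\ref{homogeneous type}. Your explicit verification that $\ord_{S_2}(q)=q$ and $l_q(S_2)=0$ for $S_2=\NN$ is a detail the paper leaves implicit, but the argument is the same.
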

\begin{proof}
Note that	$S$ is a specific extension of $S_1$ and so $G(S)$ is Cohen-Macaulay by \cite[Theorem~3.16]{JZ}. On the other hand $S$ is homogeneous by Corollary~\ref{pnotinAp}. Now, Theorem~\ref{homogeneous type} implies the result.
\end{proof}

Now we want to show how to construct systematically non homogeneous numerical semigroups whose tangent cones are  complete intersection. The following auxiliary result is needed,  where we use the notations and concepts of \cite{AMS}.

\begin{lem}\label{glu.Grobner}
	Let $S=\langle qm_1,\ldots,qm_d,pn_1,\ldots,pn_k\rangle$  be a nice gluing of  $S_1$ and $S_2$. Let $G_1$ and $G_2$ be minimal standard  bases of $I_{S_1}$ and $I_{S_2}$, respectively. If $G(S_1)$ and $G(S_2)$ are Cohen-Macaulay, then $G = G_1\cup G_2\cup\{x^\p-y_1^{a}\}$ is a minimal standard bases of $I_S$, for some factorization $\p$ of $p$. In particular if $G(S_1)$ is complete intersection, then $G(S)$ is also complete intersection.
\end{lem}
\begin{proof}
It follows from the proof of \cite[Theorem 2.6]{AMS}.
\end{proof}

\begin{lem}\label{3}
	Let $S_1$ be a numerical semigroup of embedding dimension two and multiplicity $m(S_1)>3$. Then there exists an extension of $S_1$ with complete intersection tangent cone which is not homogeneous.
\end{lem}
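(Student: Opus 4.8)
The plan is to realize the desired semigroup as an extension $S = \langle qm_1, qm_2, p\rangle$ of $S_1 = \langle m_1, m_2\rangle$ and to read off both properties from the explicit shape of $I_S$. Since $S_1$ has embedding dimension two we have $I_{S_1} = \langle x_1^{m_2} - x_2^{m_1}\rangle$, and as $S_2 = \NN$ forces $I_{S_2} = 0$, Remark~\ref{glu} gives $I_S = \langle x_1^{m_2} - x_2^{m_1},\ x_2^{j} - x_3^{q}\rangle$ as soon as we take $p = jm_2 \in \AP(S_1, m_1)$, whose unique factorization is $x_2^{j}$. Here $\ord_{S_1}(p) = j$, and the admissible range $2 \le j \le m_1 - 1$ is nonempty precisely because $m_1 > 3$.

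First I would secure non-homogeneity. Being of embedding dimension two, $S_1$ is homogeneous, so by Corollary~\ref{pnotinAp} the extension $S$ fails to be homogeneous exactly when $p \in \AP(S_1, m_1)$ and $q \ne \ord_{S_1}(p)$. The first condition holds by construction, so I only need $q \ge 2$ with $\gcd(q, p) = 1$ and $q \ne j$.

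For the tangent cone I would exhibit a standard basis of $I_S$ consisting of two binomials whose initial forms are coprime pure powers, which forces $(I_S)_*$ to be a complete intersection. The initial form of $x_1^{m_2} - x_2^{m_1}$ is $x_2^{m_1}$, since $m_1 < m_2$. The cleanest case is a nice gluing, namely $2 \le q < j$: then the initial form of $x_2^{j} - x_3^{q}$ is $x_3^{q}$, and Lemma~\ref{glu.Grobner} guarantees that these two binomials already constitute a Gr\"obner basis for $<_{ds}$. Consequently $(I_S)_* = \langle x_2^{m_1}, x_3^{q}\rangle$, a complete intersection, and because the least generator $qm_1$ corresponds to $x_1$, which divides neither generator, $G(S)$ is Cohen-Macaulay by Proposition~\ref{ds}.

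The main obstacle is purely arithmetic: a coprime $q$ with $2 \le q < j \le m_1 - 1$ need not exist (for instance, for $S_1 = \langle 5, 6\rangle$ every candidate shares a factor with $jm_2$). To handle all semigroups uniformly I would instead fix $j = 2$, $p = 2m_2$, and choose $q$ to be a prime larger than $2m_2$; then $p$ is the least generator and $q > j$. A single Buchberger reduction rewrites $x_1^{m_2} - x_2^{m_1}$ as $x_1^{m_2} - x_2^{\beta} x_3^{q\alpha}$, where $m_1 = 2\alpha + \beta$, and since $q\alpha > m_2$ its initial form is $x_1^{m_2}$; a coprimality check of leading monomials then shows $\{x_2^2 - x_3^q,\ x_1^{m_2} - x_2^{\beta}x_3^{q\alpha}\}$ is a Gr\"obner basis. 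Hence $(I_S)_* = \langle x_2^2, x_1^{m_2}\rangle$ is again a complete intersection, with the least generator $x_3$ dividing neither generator. In both routes the point demanding genuine care is verifying that the two binomials really do form a standard basis, so that $(I_S)_*$ has exactly two generators and $\beta_1(G(S)) = \beta_1(S) = 2$.
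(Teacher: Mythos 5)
Your proof is correct, and it ends up genuinely stronger than the paper's. The paper's proof is exactly your first route specialized to $j=m_1-1$: it sets $p=(m_1-1)m_2$ and $q=2$, gets non-homogeneity from Corollary~\ref{pnotinAp} since $\ord_{S_1}(p)=m_1-1>2$, and gets the complete intersection tangent cone from Lemma~\ref{glu.Grobner}, the extension being a nice gluing because $q=2\leq\ord_{S_1}(p)$. The arithmetic obstacle you flag is not hypothetical: it defeats the paper's own choice, since a gluing requires $\gcd(p,q)=1$, and $\gcd(2,(m_1-1)m_2)=1$ forces $m_1$ even and $m_2$ odd; for $S_1=\langle 5,6\rangle$ the paper's recipe produces $\langle 10,12,24\rangle$, which is not even a numerical semigroup, and as you observe no $q$ with $2\leq q<j\leq 4$ is coprime to $jm_2$, so the nice-gluing route cannot be salvaged there by varying $j$. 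Your second construction ($p=2m_2$, $q$ a prime exceeding $2m_2$) repairs this uniformly, at the cost of leaving the nice-gluing framework: since $q>\ord_{S_1}(p)$, Lemma~\ref{glu.Grobner} is unavailable and the standard-basis check must be done by hand, which is indeed the one point needing care. It does go through: the leading monomials of $f=x_2^2-x_3^q$ and $g=x_1^{m_2}-x_2^{\beta}x_3^{q\alpha}$ with respect to $<_{ds}$ are the coprime pure powers $x_2^2$ and $x_1^{m_2}$, and since neither trailing term is divisible by the corresponding leading monomial, the two products in the expression $x_2^{\beta}x_3^{q\alpha}f-x_3^{q}g$ of the $S$-polynomial have distinct leading monomials, so it is a standard representation and Buchberger's criterion applies (this divisibility check is what legitimizes the product criterion for the local ordering $<_{ds}$). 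If you prefer to bypass that subtlety altogether, note that $(I_S)_*\supseteq(x_2^2,x_1^{m_2})$, both quotients are one-dimensional of the same multiplicity $2m_2=m(S)$, and $P/(x_2^2,x_1^{m_2})$ is a one-dimensional Cohen--Macaulay ring, hence has no nonzero ideal of finite length; additivity of multiplicities then forces equality. In short, the paper's argument buys brevity via Lemma~\ref{glu.Grobner} but is complete only when $(m_1-1)m_2$ is odd, whereas your argument covers every $S_1$ with $m(S_1)>3$.
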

\begin{proof}
	Let $S_1=\langle m_1,m_2\rangle$. Then $\AP(S_1,m_1)=\{m_2,\ldots,(m_1-1)m_2\}$. Let $p=(m_1-1)m_2$ and $q=2$. Then $\ord_{S_1}p=m_1-1>2=q$. Hence $S=\langle qm_1,qm_2,p\rangle$ is not homogeneous by Corollary~\ref{pnotinAp}. On the other hand $G(S)$ is complete intersection from Lemma~\ref{glu.Grobner}.
\end{proof}

\begin{prop}\label{d}
Let $d\geq 3$ be an integer. Then there exist infinitely many numerical semigroups  of embedding dimension $d$ with complete intersection tangent cones, which are not homogeneous.
\end{prop}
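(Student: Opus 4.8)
The plan is to induct on the embedding dimension $d$, raising it one step at a time by taking extensions (nice gluings with $\NN$) that simultaneously preserve the complete intersection property of the tangent cone and the failure of homogeneity. For the base case $d=3$, Lemma~\ref{3} already supplies, for each two-generated semigroup $\langle m_1,m_2\rangle$ with multiplicity $>3$, an extension of embedding dimension three whose tangent cone is a complete intersection and which is not homogeneous; since there are infinitely many such $\langle m_1,m_2\rangle$, this settles $d=3$.

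For the inductive step I would assume that $A$ is a numerical semigroup of embedding dimension $d$, with $G(A)$ a complete intersection and $A$ not homogeneous. I would pick a large \emph{odd} element $p\in A$ with $\ord_A(p)\geq 3$ --- such $p$ exist because $A$ is cofinite and the order of an element grows with the element --- and set $q=2$. Then $\gcd(p,q)=1$, $p$ is not a generator of $A$, and $1<q<\ord_A(p)$, so $T:=\langle 2m_1,\dots,2m_d,p\rangle$ is a nice gluing of $A$ and $\NN$ in the sense of Definition~\ref{G}, of embedding dimension $d+1$; it is a numerical semigroup because $p$ is odd while the generators coming from $A$ are all even. Since the multiplicity of a nice gluing is $m(T)=q\,m(A)$ (Remark~\ref{m}), applying Corollary~\ref{HS} with $s=m(A)$ shows that if $T$ were homogeneous then $A$ would be homogeneous; hence $T$ is not homogeneous.

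The heart of the argument is that $G(T)$ remains a complete intersection. As $G(A)$ is a complete intersection it is Cohen-Macaulay, and $G(\NN)=\kk[y_1]$ is Cohen-Macaulay, so Lemma~\ref{glu.Grobner} applies: writing $G_1$ for the reduced Gr\"obner basis of $I_A$ with respect to $<_{ds}$, the set $G_1\cup\{x^\p-y_1^{q}\}$ is a Gr\"obner basis of $I_T$ with respect to $<_{ds}$, and therefore a standard basis. Consequently $(I_T)_*=(I_A)_*+\big((x^\p-y_1^{q})_*\big)$. Taking $\p$ to be a maximal factorization of $p$ gives $|\p|=\ord_A(p)>q$, so this binomial is non-homogeneous and its initial form is the pure power $y_1^{q}$. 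Thus $(I_T)_*=(I_A)_*+(y_1^{q})$ and $G(T)\cong G(A)[y_1]/(y_1^{q})$. Because $y_1$ is a fresh variable, $y_1^{q}$ is a nonzerodivisor on the complete intersection $G(A)[y_1]$, so the quotient $G(T)$ is again a complete intersection, defined by $d$ equations in $d+1$ variables, which is exactly the codimension of $T$.

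Infinitely many examples in a fixed embedding dimension $d\geq 4$ then come either from the infinitely many base cases propagated through the induction, or, at the last stage, from the infinitely many admissible odd elements $p$, which become the largest generator of $T$ and hence yield pairwise different semigroups. The step I expect to require the most care is precisely the preservation of the complete intersection property, namely the identity $(I_T)_*=(I_A)_*+(y_1^{q})$: it hinges on combining the fact that $G_1\cup\{x^\p-y_1^{q}\}$ is a standard basis (Lemma~\ref{glu.Grobner}, which uses that $<_{ds}$ is degree-compatible) with the choice that makes the gluing binomial non-homogeneous, so that its initial form is the clean monomial $y_1^{q}$ rather than the full binomial. The remaining points --- the nice-gluing hypotheses, the coprimality condition, and the transfer of non-homogeneity through Corollary~\ref{HS} --- are routine once $p$ and $q$ are fixed.
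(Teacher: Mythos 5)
Your proof is correct and follows essentially the same route as the paper: induction on $d$ with Lemma~\ref{3} as base case, a nice extension with $q=2$ and a suitable odd $p$ at each step, Corollary~\ref{HS} (the paper instead cites Theorem~\ref{thm-glu}, which contains it) to propagate non-homogeneity, and Lemma~\ref{glu.Grobner} to preserve the complete intersection property of the tangent cone. Your explicit computation $(I_T)_*=(I_A)_*+(y_1^{q})$ merely spells out what the paper leaves implicit when it concludes complete intersection from the resulting Gr\"obner basis.
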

\begin{proof}
 For $d=3$, the result is clear by Lemma~\ref{3}. We proceed by induction on $d$. Let $S_1=\langle m_1<\cdots<m_{d-1}\rangle$ be a numerical semigroup of embedding dimension $d-1$, with complete intersection tangent cone which is not homogeneous. Let $q=2$ and $p\in S_1$  such that $\ord_{S_1}(p)\geq2$ and $\gcd(q,p)=1$. Then the extension $S:=\langle qm_1,\ldots, qm_{d-1},p\rangle$ is a nice  extension of $S$.
Since $S_1$ is not homogeneous,  $S$ is not homogeneous by Theorem~\ref{thm-glu}. More over  $G(S')$ is complete intersection by Lemma~\ref{glu.Grobner}.
\end{proof}

As a consequence we get:

\begin{cor}\label{d1}
Let $d\geq 3$. Then there exist infinitely many numerical semigroups with embedding dimension $d$, which are of homogeneous type but they are not homogeneous.
\end{cor}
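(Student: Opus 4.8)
The plan is to deduce this corollary immediately from the two ingredients already established: the construction in Proposition~\ref{d} and the homogeneous-type criterion in Remark~\ref{koszul}. Concretely, for the given $d\geq 3$, Proposition~\ref{d} produces infinitely many numerical semigroups $S$ of embedding dimension $d$ whose tangent cones $G(S)$ are complete intersections and which are \emph{not} homogeneous. The only additional input needed is that each of these semigroups is nonetheless of homogeneous type.

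First I would invoke Remark~\ref{koszul}: for any numerical semigroup $S$ with complete intersection tangent cone we have $\beta_1(S)=\beta_1(G(S))=d-1$, and consequently the minimal free resolutions of both $R=\kk[S]$ and of $G(S)$ are Koszul complexes on $d-1$ elements. In particular $\beta_i(R)=\beta_i(G(S))$ for all $i\geq 1$, so $S$ is of homogeneous type. Applying this to each semigroup supplied by Proposition~\ref{d}, every member of that infinite family is simultaneously of homogeneous type and not homogeneous, which is exactly the assertion of the corollary.

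The substantive work here has in fact already been carried out upstream: the genuine difficulty lies in Proposition~\ref{d}, namely in producing \emph{infinitely many} embedding dimension $d$ examples with complete intersection tangent cones that fail homogeneity, which is handled by the inductive gluing (nice extension) argument together with the Gr\"obner basis control from Lemma~\ref{glu.Grobner} and the non-homogeneity criterion of Corollary~\ref{pnotinAp}. Once Proposition~\ref{d} is in hand, the passage to homogeneous type via Remark~\ref{koszul} is formal, so I would not expect any remaining obstacle in the proof of the corollary itself beyond assembling these two cited results.
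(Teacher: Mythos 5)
Your proposal is correct and matches the paper's own (implicit) argument exactly: the corollary is stated right after Proposition~\ref{d} as an immediate consequence, obtained by combining that proposition's infinite family of non-homogeneous semigroups with complete intersection tangent cones with Remark~\ref{koszul}, which gives that a complete intersection tangent cone forces homogeneous type. No further comment is needed.
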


\section{Shifted family of semigroups}

Let $\n: n_1< \cdots<n_d$ be a sequence of positive integers. For any non-negative integer $j$, we consider the shifted family
\[\n+j: n_1+j,\ldots,n_d+j.\] and the semigroup
\[S+j:=\langle  n_1+j,\ldots,n_d+j\rangle,\]
that we call the $j$-th shifting of $S$.

\begin{rem}
	If the semigroup $S$ generated by $\n$ is  a numerical semigroup, it may happen that $S+j$ is not anymore a numerical semigroup.
	For instance, let $S=\langle 4, 7\rangle$. Then $S+2=\langle 6, 9\rangle$. 	Also, it may happen that $\n$ is a minimal system of
	generators of $S$ but the shifted family is not anymore a minimal
	system of generators of $S+j$. For instance, $S=\langle 5, 11, 13\rangle$ and $S+1=\langle 6, 12, 14\rangle=\langle 6, 14\rangle$.
\end{rem}

\begin{lem}\label{min-shift}
	If $S$ is the numerical semigroup minimally generated by $\n$, then $S+j$ is minimally generated by $\n+j$ for all $j>n_d-2n_1$.
\end{lem}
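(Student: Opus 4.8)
The plan is to reduce the statement to a single non-redundancy check by invoking the standard fact that a submonoid of $\NN$ has a unique minimal system of generators, which is contained in every generating set. Applying this to $S+j=\langle n_1+j,\ldots,n_d+j\rangle$, the minimal generators of $S+j$ automatically form a subset of $\{n_1+j,\ldots,n_d+j\}$. Hence it will suffice to prove that none of these $d$ elements is redundant, that is, that $n_k+j\notin\langle n_i+j \mid i\neq k\rangle$ for each $k$; equivalently, that $n_k+j$ cannot be written as a sum of two or more of the shifted generators.

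To this end I would argue by contradiction. Suppose $n_k+j=\sum_{i\neq k}a_i(n_i+j)$ for some non-negative integers $a_i$, and set $N=\sum_{i\neq k}a_i\geq 1$ (the case where the generator $n_k+j$ itself appears with positive coefficient reduces at once to the trivial one-term expression, so it may be excluded). Separating off the shift, the relation reads $n_k+j=\sum_{i\neq k}a_in_i+Nj$. Since $n_1$ is the smallest generator, each $n_i\geq n_1$ and therefore $\sum_{i\neq k}a_in_i\geq Nn_1$, which yields the crucial estimate $n_k+j\geq N(n_1+j)$.

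The argument then splits according to the value of $N$. If $N=1$, the relation forces $n_k+j=n_i+j$ for a single index $i\neq k$, hence $n_k=n_i$, contradicting the fact that the $n_i$ are pairwise distinct. If $N\geq 2$, the estimate gives $n_k+j\geq 2(n_1+j)$, i.e. $j\leq n_k-2n_1\leq n_d-2n_1$, contradicting the hypothesis $j>n_d-2n_1$. In either case no nontrivial expression exists, so every $n_k+j$ is a genuine minimal generator, and $\n+j$ is the minimal system of generators of $S+j$.

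The computation is entirely elementary, and I expect no serious obstacle. The only delicate point is to observe that the multiplicity bound $Nn_1$ must dominate, which is precisely where the threshold $n_d-2n_1$ enters through the case $N\geq 2$; one should take care to treat $N=1$ separately, since there the length bound is vacuous and distinctness of the $n_i$ must be used instead. Note also that the conclusion does not require $S+j$ to be a numerical semigroup in the sense of having coprime generators: minimality of the generating set is an intrinsic property of the submonoid, independent of this.
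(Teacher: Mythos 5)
Your proof is correct and is essentially the paper's own argument: both rest on the estimate $n_k+j\geq N(n_1+j)\geq 2n_1+2j>n_d+j$ for any expression of total length $N\geq 2$, which is exactly where the threshold $j>n_d-2n_1$ enters. The paper merely leaves implicit the two points you spell out (the reduction via uniqueness of the minimal generating set, and the $N=1$ case handled by distinctness of the $n_i$), so your write-up is a slightly more complete rendering of the same proof.
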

\begin{proof}
    Assume  that $n_r+j=\sum^d_{i=1}s_i(n_i+j)$ for some non-negative integers $s_i$. Let $a=\sum^d_{i=1}s_i$. If $a\geq 2$, then
    \[n_r+j\geq a(n_1+j)\geq 2n_1+2j>n_d+j.\]
    Hence $n_r>n_d$, a contradiction.
\end{proof}

We will use the following  notation in the sequel:
\begin{itemize}
    \item $m_i:=n_d-n_i$ for $1\leq i\leq d$;
    \item $g:=\gcd(m_1,\ldots,m_{d-1})$;
    \item $T:=\<\frac{m_1}{g},\ldots,\frac{m_{d-1}}{g}\>$;
     \item $L:=m_1m_2\left(\frac{gc+m_1}{m_{d-1}}+d\right)-n_d$,
     where $c=\min\{s\in S; s+i\in S \mbox{ for all } i>0\}$ is the conductor of $T$.
 \end{itemize}

	Let $B:=\sum^d_{i=1}m_i+d+g$ and $N:=\max\{m_1(d+\reg(J(\n))), m_1m_2\left(\frac{gc+m_1}{m_{d-1}}+B\right)\}$, where $J(\n)$ is the ideal generated by all homogeneous elements of $I_S$ and $\reg(J(\n))$ is the Castelnuovo-Mumford regularity of $J(\n)$.
	It is shown by Vu, in \cite[Corollary 3.6]{V}, that for any $j>N$, for inhomogeneous  prime binomials $x^\a-x^\b$ of  $I_S$, $x_1$ divides $x^\a$ where $|\a|>|\b|$. Using this fact, Herzog and Stamate \cite[Theorem 1.4]{HS} show that
	$S+j$ is of homogeneous type, in particular  $G(S+j)$ is Cohen-Macaulay, for all $j>N$.
In the following result, we improve this bound $N$ by $L$, which only depends on the initial data of the family $\n$, using ideas inspired by \cite{V}.

\begin{thm}\label{shift}
Let $j>L$ and $s\in S+j$. If $\a,\a'\in\F(s)$ with $|\a|>|\a'|$,  then there exists $\b\in\F(s)$ such that $|\b|=|\a|$ and $b_1\neq 0$.
\end{thm}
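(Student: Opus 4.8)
The plan is to convert the statement into a counting problem governed by the conductor $c$ of $T$. If $a_1\neq 0$ then $\b=\a$ already works, so I would assume $a_1=0$, write $L_0:=|\a|$ (so that $\sum_{i\geq 2}a_i=L_0$), and set $\sigma:=\sum_{i\geq 2}a_in_i=s-L_0 j$. The key manipulation is the substitution $n_i=n_d-m_i$ (with $m_d=0$), under which a factorization $\b$ of $s$ in $S+j$ with $|\b|=L_0$ and $b_1\geq 1$ is the same datum as a representation of
\[W:=\sum_{i=2}^{d-1}a_im_i-m_1\]
as a sum of at most $L_0-1$ of the numbers $m_1,\ldots,m_{d-1}$ using at least one copy of $m_1$. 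Indeed, from such a representation $\sum_i\mu_im_i=W$ with $\sum_i\mu_i\leq L_0-1$ one recovers $\b$ by setting $b_1=\mu_1+1$, $b_i=\mu_i$ for $2\leq i\leq d-1$, and $b_d=L_0-1-\sum_{i<d}\mu_i\geq 0$, and one checks $\sum b_in_i=\sigma$. Note that $W$ is automatically divisible by $g$: since $g\mid n_d-n_i$ for all $i$, every generator is congruent to $n_d$ modulo $g$, and a short computation then gives $g\mid W$.

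Next I would use the shorter factorization $\a'$ to force $\a$ to be long. With $\delta:=|\a|-|\a'|\geq 1$, the identity $\sum_i a'_in_i=\sigma+\delta j$ together with $\sum_i a'_in_i\leq|\a'|n_d=(L_0-\delta)n_d$ and $\sigma=L_0n_d-\sum_{i=2}^{d-1}a_im_i$ yields
\[\sum_{i=2}^{d-1}a_im_i\ \geq\ \delta(j+n_d)\ \geq\ j+n_d.\]
Two consequences follow. First, $W\geq j+n_1>0$, and a direct check using $j>L$ shows $W\geq gc$; hence $W/g\geq c$ lies in $T$, so $W\in\langle m_1,\ldots,m_{d-1}\rangle$. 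Second, since $m_i\leq m_2$ for $2\leq i\leq d-1$, we get $\sum_{i=2}^{d-1}a_im_i\leq m_2L_0$, whence $L_0\geq (j+n_d)/m_2>m_1\bigl((gc+dm_1)/m_{d-1}+d\bigr)$, the last inequality being exactly the translation of $j>L$.

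Finally I would represent $W$ economically. Working in $T$, I would peel off copies of the largest generator $m_1/g$ and absorb the residue, chosen in the range $[c,\,c+m_1/g)$ so that it still lies in $T$, using the generators down to $m_{d-1}/g$; this bounds the number of parts by
\[\frac{W}{m_1}+\frac{gc+m_1}{m_{d-1}}\ \leq\ \frac{m_2}{m_1}L_0+\frac{gc+m_1}{m_{d-1}}.\]
Requiring this to be at most $L_0-1$ rearranges, using $m_1-m_2=n_2-n_1\geq 1$, to the demand $L_0\geq m_1\bigl((gc+m_1)/m_{d-1}+1\bigr)$, which is implied by the lower bound on $L_0$ secured in the previous step since $d\geq 2$. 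Assembling $\b$ as in the first paragraph then finishes the proof. The delicate point, and the reason the precise shape of $L$ matters, is exactly this last estimate: one must represent $W$ with strictly fewer than $L_0$ parts, and the constant $L=m_1m_2\bigl((gc+dm_1)/m_{d-1}+d\bigr)-n_d$ is calibrated so that the conductor threshold $gc$, the factor $m_2/m_1$ lost when bounding the number of parts, and the slack needed for the $d-2$ small generators $m_2,\ldots,m_{d-1}$ are all covered simultaneously.
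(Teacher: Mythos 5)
Your proof is correct, and it rests on the same three ingredients as the paper's: the change of variables $n_i=n_d-m_i$ that turns length-$|\a|$ factorizations of $s$ into representations in $\langle m_1,\ldots,m_{d-1}\rangle$, the inequality $x:=\sum_{i\geq 2}a_im_i\geq\delta(j+n_d)$ squeezed out of the shorter factorization $\a'$, and Vu's conductor trick of writing a large multiple of $g$ as $tm_1+v$ with $gc\leq v<gc+m_1$, so that $v/g\in T$ and the number of parts is controlled exactly by the shape of $L$. The difference is architectural. The paper argues by contradiction: it fixes a factorization $\cc$ of $x$ of total order $l$ with maximal support, assumes $c_1=0$, and manufactures from $y=x-\sum_{i\in F}m_i$ a second factorization $\z$ whose support is strictly larger. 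You instead reserve one copy of $m_1$ at the outset and directly represent $W=x-m_1$ with at most $L_0-1$ parts, which assembles into the desired $\b$ with no extremal choice at all. Your direct packaging buys something real: it eliminates the support comparison, which in the paper is the one delicate spot --- when $d$ already lies in $F$, strict growth of the support hinges on $z_1=t+w_1\geq 1$, i.e.\ on $t\geq 1$, which does hold because $y>gc+m_1$ but is left implicit there --- and it makes transparent exactly which inequality the constant $L$ must satisfy. Two cosmetic slips, neither affecting correctness: the phrase ``using at least one copy of $m_1$'' misdescribes your own reduction (it is the representation of $x=W+m_1$, not of $W$, that must use $m_1$; your displayed recovery of $\b$ with $b_1=\mu_1+1$ is the correct statement), and your final estimate demands the part count be $\leq L_0-1$ via a bound with an extra $+1$, where $<L_0$ plus integrality would suffice.
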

\begin{proof}
 Let $l:=|\a|$ and $l':=|\a'|$. Then
\[l(n_d+j)-s=l(n_d+j)-\sum^d_{i=1}a_i(n_i+j)=l(n_d+j)-\sum^d_{i=1}a'_i(n_i+j).\]
Hence we have  $\sum^d_{i=1}a_i(n_d-n_i)=\sum^d_{i=1}a'_i(n_d-n_i)+(l-l')(n_d+j)$, equivalently
\begin{equation}\label{l-l'}
\sum^d_{i=1}a_im_i=\sum^d_{i=1}a'_im_i+(l-l')(n_d+j).
\end{equation}
If $d=2$, then the above equality (\ref{l-l'}) implies that
\[a_1(n_2-n_1)=a'_1(n_2-n_1)+(l-l')(n_2+j).\]
In particular $a_1\neq 0$ and the result follows. Now assume that $d>2$.
 Let $x:=\sum^d_{i=1}a_im_i$ and $\cc$ be a factorization of $x$ with $|\cc|=l$, in the semigroup $\<m_1,\ldots,m_{d-1}\>$,
 whose support has the largest cardinality among  all factorizations of  $x$ with total order $l$.
In our argument we  use several times the fact that $m_1>m_2>\cdots>m_{d-1}>m_d=0$. We show, arguing by contradiction, that $c_1\neq 0$. If $c_1=0$, then $x\leq lm_2$  and so
\begin{equation}
x/m_2\leq l.
\end{equation}
Let $F:=\{i; c_i\neq 0\}$ and $y:=x-\sum_{i\in F}m_i$. Note that
\[\frac{m_1m_2}{m_{d-1}}(gc+m_1)+dm_1m_2>gc+dm_1.\]
In particular
\[L+n_d=m_1m_2\left(\frac{gc+m_1}{m_{d-1}}+d\right)>gc+dm_1.\]
Now,
\[y\geq x-(d-1)m_{1}\geq n_d+j-(d-1)m_{1}>gc+m_1,\] the second inequality holds by (\ref{l-l'}), since $m_d=0$, and
the last one follows because
$n_d+j>L+n_d>gc+dm_{1}$.
As $y>gc+m_1$, we may write $y=tm_1+v$ for some integers $t>0$ and $gc\leq v<gc+m_1$. Note that both $y$ and $m_1$ are divisible by $g$ and so $g|v$.
Since $v/g\geq c$, $v/g\in T$ and so $v/g=\sum^{d-1}_{i=1}w_i(m_i/g)$ for some $w_i\geq 0$. Hence
\[y=tm_1+\sum^{d-1}_{i=1}w_im_i.\]
As $v<gc+m_1$ and $v=\sum^{d-1}_{i=1}w_im_i\geq\sum^{d-1}_{i=1}w_im_{d-1}$, we have
\begin{equation}\label{I}
\sum^{d-1}_{i=1}w_i<\frac{gc+m_1}{m_{d-1}}.
\end{equation}
Note that $n_d+j>n_d+L=m_1m_2\left(\frac{gc+m_1}{m_{d-1}}+d\right)$, therefore
\begin{equation}\label{III}
\frac{n_d+j}{m_1m_2}>\frac{gc+m_1}{m_{d-1}}+d.
\end{equation}
Now, we have
\[\begin{array}{ll}
\sum^{d-1}_{i=1}w_i+d+t&<\frac{gc+m_1}{m_{d-1}}+d+t,  \mbox{ from } (\ref{I})\\
&<\frac{gc+m_1}{m_{d-1}}+d+y/m_1\\
&<\frac{n_d+j}{m_1m_2}+y/m_1, \mbox{ from } (\ref{III})\\
&<\frac{x}{m_1m_2}+\frac{x}{m_1}=\frac{x(m_2+1)}{m_1m_2}\leq x/m_2\leq l.
\end{array}\]
On the other hand, $\sum^{d-1}_{i=1}m_i+gc+m_1<n_d+j\leq x$. Therefore we may write $x=\sum^d_{i=1}z_im_i$, where
$z_1=t+w_1$, $z_i=\delta_{i,F}+w_i$ for $i=2,\ldots,d-1$ and $z_d=l-(\sum^{d-1}_{i=1}w_i+t+|F|)$. Now, $\z$ is a factorization of $x$  with
larger support than $\cc$, a contradiction. Therefore $c_1\neq 0$. Note that $s=l(n_d+j)-x=\sum^d_{i=1}c_in_i$. Hence we can take $\b=\cc$ and the result follows.
\end{proof}

\begin{cor}\label{sh-h}
If  $j>L$, then  $S+j$ is homogeneous and $G(S)$ is Cohen-Macaulay.
\end{cor}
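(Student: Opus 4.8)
The plan is to derive the corollary from Theorem~\ref{**} applied to the shifted semigroup $S+j$, with Theorem~\ref{shift} supplying the only nontrivial input. Since Theorem~\ref{**} asserts that "$S+j$ is homogeneous and its tangent cone is Cohen-Macaulay" is equivalent to its combinatorial condition~(3), it suffices, for $j>L$, to exhibit a minimal binomial generating set $E$ of $I_{S+j}$ in which every non-homogeneous binomial $x^\a-x^\b\in E$ with $|\a|>|\b|$ has $a_1\neq 0$. Note first that for $j>L$ the shifted sequence $\n+j$ minimally generates $S+j$ (Lemma~\ref{min-shift}), so $x_1$ is attached to the least generator $n_1+j$, consistently both with the ordering $<_{ds}$ and with the factorizations appearing in Theorem~\ref{shift}.

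First I would start from an arbitrary finite binomial generating set $E_0$ of $I_{S+j}$ and repair its bad members one at a time. Suppose $g=x^\a-x^\b\in E_0$ is non-homogeneous with $|\a|>|\b|$ but $a_1=0$. Writing $z:=s(\a)=s(\b)$, the factorizations $\a,\b\in\F(z)$ have different lengths, so Theorem~\ref{shift} furnishes a $\gamma\in\F(z)$ with $|\gamma|=|\a|$ and $\gamma_1\neq 0$. I then replace $g$ by the pair
\[
x^\a-x^\gamma \quad\text{and}\quad x^\gamma-x^\b .
\]
Both lie in $I_{S+j}$ (all three monomials are factorizations of $z$) and $g=(x^\a-x^\gamma)+(x^\gamma-x^\b)$, so the ideal is unchanged. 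Here $x^\a-x^\gamma$ is homogeneous of degree $|\a|$, while in $x^\gamma-x^\b$ the monomial of larger degree is $x^\gamma$, and it carries $x_1$ because $\gamma_1\neq 0$. Sweeping through $E_0$ this way produces a finite binomial generating set $E_1$ in which every non-homogeneous binomial already meets the requirement of Theorem~\ref{**}(3); no fresh violations are created, so a single pass suffices.

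Finally I would extract a minimal generating set $E\subseteq E_1$. Because $I_{S+j}$ is homogeneous for the fine grading assigning $x_i$ the weight $n_i+j$, and every binomial above is homogeneous for this grading, a minimal generating set can be taken as a subset of $E_1$; deleting generators plainly preserves the condition "every non-homogeneous binomial with $|\a|>|\b|$ has $a_1\neq 0$", homogeneous binomials being unconstrained. Thus $E$ witnesses Theorem~\ref{**}(3), and (3)$\Rightarrow$(1) yields that $S+j$ is homogeneous with Cohen-Macaulay tangent cone. (The homogeneity part alone can also be read off from Corollary~\ref{homog}, as $x_1$ now lies in the support of every non-homogeneous generator of $E$.)

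The genuine content has already been isolated in Theorem~\ref{shift}: once we know that for $j>L$ any element with two factorizations of different lengths admits one of the larger length involving the least generator $n_1+j$, the rest is the bookkeeping above. Accordingly, the only point that needs a little care is this termination-and-minimality step—verifying that the replacements introduce no new non-homogeneous binomials with $a_1=0$ and that trimming to a minimal set keeps the property—rather than any further semigroup-theoretic difficulty.
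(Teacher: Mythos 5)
Your proof is correct and takes essentially the same route as the paper: both reduce the corollary to Theorem~\ref{**} applied to $S+j$, with Theorem~\ref{shift} as the sole substantive input. The only difference is that the paper invokes the equivalent condition (2) of Theorem~\ref{**} directly, whereas you verify condition (3) by a splitting construction that in effect re-proves, in this special case, the implication (2)$\Rightarrow$(3) already contained in that theorem.
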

\begin{proof}
It follows by Theoremn~\ref{**}(3).
\end{proof}

\begin{cor}\cite{HS}
If  $j>L$, then  $S+j$ is  of homogeneous type.
\end{cor}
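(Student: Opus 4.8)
The plan is to obtain this corollary as an immediate consequence of the two principal results already established, namely Corollary~\ref{sh-h} and Theorem~\ref{homogeneous type}. All the analytic work has been carried out in Theorem~\ref{shift}, so at this stage nothing further needs to be proved from scratch; the task is purely to chain the two implications together.

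First I would invoke Corollary~\ref{sh-h}: for every $j>L$, the shifted semigroup $S+j$ is homogeneous and its tangent cone $G(S+j)$ is Cohen-Macaulay. Here one should read the conclusion of Corollary~\ref{sh-h} as a statement about $G(S+j)$ (the shifted semigroup), which is what Theorem~\ref{shift} actually delivers through condition~(2) of Theorem~\ref{**}. Implicit in this step is that for $j>L$ the sequence $\n+j$ is genuinely a minimal system of generators of a numerical semigroup, which is guaranteed since $L$ dominates the elementary bound $n_d-2n_1$ of Lemma~\ref{min-shift}; I would note this in passing but not dwell on it.

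Next I would apply Theorem~\ref{homogeneous type} to the semigroup $S+j$. That theorem asserts precisely that a homogeneous numerical semigroup with Cohen-Macaulay tangent cone is of homogeneous type, so the two properties secured in the previous step yield that $S+j$ is of homogeneous type. This completes the argument.

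There is essentially no obstacle here, since the corollary is a formal combination of earlier statements; the only point requiring a moment of care is the bookkeeping of the reference in Corollary~\ref{sh-h} to the \emph{shifted} tangent cone. The value of the result lies not in the proof but in the conclusion: it recovers the Herzog--Stamate theorem from \cite{HS} that $S+j$ is of homogeneous type for $j\gg 0$, while replacing their bound $N$ (which depends on the Castelnuovo--Mumford regularity of $J(\n)$) by the explicit constant $L$ that is computable directly from the sequence $\n$.
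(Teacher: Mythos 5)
Your proof is correct and follows essentially the same route as the paper: the paper's own proof simply cites Corollary~\ref{sh-h} together with an earlier result (it writes ``Proposition~\ref{shift}'', though the step actually being used is Theorem~\ref{homogeneous type}, exactly as you invoke it), so your chaining of Corollary~\ref{sh-h} with Theorem~\ref{homogeneous type} is the intended argument, stated with the references put right. Your side remarks --- that the Cohen--Macaulay conclusion of Corollary~\ref{sh-h} is to be read for $G(S+j)$, and that $\n+j$ minimally generates $S+j$ for $j>L$ --- are accurate and harmless additions.
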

\begin{proof}
It follows by Corollary~\ref{sh-h} and Theorem~\ref{homogeneous type}.
\end{proof}

\begin{dfn}\label{sh-t}
Given a sequence of positive integers $\s: s_0 = 0 < s_1 < \cdots <  s_{d-1}$, $d\geq 2$, we say that $\n$ is of \emph{shifting type} $\s$ if $s_i= n_d-n_{d-i}$, for all $1 \leq \cdots \leq d-1$. We also say that the semigroup $S=\langle n_1, \dots , n_d \rangle$ is of shifting type $\s$.
\end{dfn}

\begin{rem}
Note that $n_{d-i} = s_{d-1}-s_i+n_1$ for all $i= 1, \dots , d$, hence the sequence $\n$ is uniquely determined by $n_1$ and its shifting type.
\end{rem}

\begin{rem}
The shifting type is invariant under shifting and two sequences of $d$ positive integers are shifted one from the other if and only if they have the same shifting type.
\end{rem}

Note that the sequence of integers $$\n: n_1=1, n_2=s_{d-1}-s_{d-2}+1, \dots , n_d=s_{d-1}-s_0+1=s_{d-1}+1$$ is the one with shifting type $\s$ and the smallest possible $n_1$.

\medskip

Now, for a given sequence $\s$, let:
\begin{itemize}
    \item $g:=\gcd(s_1,\ldots,s_{d-1})$;
    \item $T:=\<\frac{s_1}{g},\ldots,\frac{s_{d-1}}{g}\>$;
    \item $L:=s_{d-1}s_{d-2}\left(\frac{gc+ds_{d-1}}{s_1}+d\right)-s_{d-1}-1$,
     where $c$ is the conductor of $T$.
 \end{itemize}

\medskip

We may reformulate the above results in the following way:

\begin{prop}
Given a sequence of positive integers  $\s: s_0 = 0 < s_1 < \cdots <  s_{d-1}$, for any $e>L$ all the numerical semigroups $S=\langle n_1, \dots, n_d \rangle$ with $n_1=e$ and shifting type $\s$ are homogeneous and $G(S)$ is Cohen-Macaulay.
\end{prop}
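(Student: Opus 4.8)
The plan is to recognize the statement as Corollary~\ref{sh-h} re-indexed: instead of describing the shifting family by the shift amount $j$, we describe it by the first generator $n_1 = e$. First I would fix the shifting type $\s$ and use the remark preceding the statement: a numerical semigroup $S$ of shifting type $\s$ with $n_1 = e$ has uniquely determined generators $n_{d-i} = s_{d-1} - s_i + e$, so that $n_d = e + s_{d-1}$ and the differences $m_i = n_d - n_i = s_{d-i}$ depend only on $\s$. Hence such an $S$ is exactly the $(e-1)$-th shifting of the sequence $\n_0$ of shifting type $\s$ with $n_1 = 1$ (for which $n_d = s_{d-1}+1$), i.e. $S = \langle \n_0 + (e-1)\rangle$.

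Next I would check that the invariants attached to $\s$ in the statement coincide with those of $\n_0$ used in Corollary~\ref{sh-h}. Because $\{m_1,\dots,m_{d-1}\} = \{s_1,\dots,s_{d-1}\}$ as sets, the integer $g$, the semigroup $T$ and its conductor $c$ agree with the ones of Section~6. Substituting $m_1 = s_{d-1}$, $m_2 = s_{d-2}$, $m_{d-1} = s_1$ and $n_d = s_{d-1}+1$ into the Section~6 expression $L = m_1 m_2\big(\frac{gc+dm_1}{m_{d-1}}+d\big) - n_d$ reproduces precisely the $L$ of the present statement. Since applying Corollary~\ref{sh-h} to $\n_0$ is legitimate --- its input, Theorem~\ref{shift}, is a purely combinatorial statement about the factorizations of the shifted semigroup $S$, which here is a bona fide minimally generated numerical semigroup, so that the characterization of Theorem~\ref{**}(2) applies verbatim --- the shifting hypothesis $j > L$ with $j = e-1$ yields that $S$ is homogeneous with Cohen-Macaulay tangent cone.

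The argument therefore reduces to a change of variables from $j$ to $n_1$, and the only genuinely delicate point I expect is the constant bookkeeping: matching the two forms of $L$ and, in particular, tracking how the shift parameter $j = e-1$ (and the relation $n_d = e + s_{d-1}$) turns the condition $j > L$ of Corollary~\ref{sh-h} into the condition $e > L$ of the statement. This is where an off-by-one could hide, so I would carry out this translation explicitly, verifying that the value of $L$ recorded here is exactly the threshold beyond which Theorem~\ref{shift} applies to every numerical semigroup of shifting type $\s$ with $n_1 = e$. No further mathematical input is needed, as the substance is entirely carried by Theorem~\ref{shift}.
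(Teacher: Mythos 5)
Your proposal is essentially the paper's own argument: the Proposition appears there without a separate proof, offered as a direct reformulation of Corollary~\ref{sh-h}, and your reduction --- identifying $S$ with $\langle \n_0+(e-1)\rangle$ for the base sequence $\n_0$ of shifting type $\s$ with $n_1=1$, and checking that $g$, $T$, $c$ and $L$ computed from $\s$ coincide with the Section~6 invariants of $\n_0$ --- is exactly that reformulation, carried out correctly. The off-by-one you flag as the delicate point is real, and it lies in the paper's statement rather than in your reduction: since $j=e-1$, the hypothesis $e>L$ only gives $j>L-1$, whereas Corollary~\ref{sh-h} asks for $j>L$, so the strict translation proves the claim for $e>L+1$; equivalently, the clean threshold for $e$ would be $s_{d-1}s_{d-2}\left(\frac{gc+ds_{d-1}}{s_1}+d\right)-s_{d-1}$ rather than the printed $L$. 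This discrepancy is harmless because $L$ is not tight: the proof of Theorem~\ref{shift} only uses the inequalities $n_d+j>gc+dm_1$ and $\frac{n_d+j}{m_1m_2}>\frac{gc+m_1}{m_{d-1}}+d$, while $L$ is built from the larger quantity $\frac{gc+dm_1}{m_{d-1}}$, so its argument still runs under $j>L-1$; but a fully rigorous write-up should either note this slack or adjust the threshold by one.
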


By using the notation in \cite{HS}, we define the \emph{width} of a numerical semigroup $S$ as the difference between the largest and the smallest generator in a minimal set of generators of $S$, and denote this number as $\textrm{wd}(S)$. It is clear that for a given positive integer $w\geq 2$, there is only a finite number of possible sequences $\s$ for the shifting type of the numerical semigroups whose width is bounded by $w$. So we finally conclude that:

\begin{prop}
Let $w\geq 2$. Then, there exists a positive integer $W$ such that all the numerical semigroups $S$ with $\textrm{wd}(S)\leq w$ and multiplicity $e\geq W$, are homogeneous and $G(S)$ is Cohen-Macaulay.
\end{prop}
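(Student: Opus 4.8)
The plan is to reduce the statement to the preceding proposition by exploiting the fact that bounding the width leaves only finitely many possible shifting types.

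First I would record two elementary identifications. For a numerical semigroup the multiplicity equals its smallest generator, so $e = n_1$ and the hypothesis $e \geq W$ is simply a lower bound on $n_1$. Moreover, writing $\s: 0 = s_0 < s_1 < \cdots < s_{d-1}$ for the shifting type of $S$, one has $\textrm{wd}(S) = n_d - n_1 = s_{d-1}$; hence the constraint $\textrm{wd}(S) \leq w$ is exactly the constraint $s_{d-1} \leq w$.

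The key observation is that bounding the width simultaneously bounds the embedding dimension: since $0 = s_0 < s_1 < \cdots < s_{d-1}$ is a strictly increasing sequence of integers with $s_{d-1} \leq w$, we get $d - 1 \leq s_{d-1} \leq w$, so $d \leq w+1$. Consequently each admissible shifting type corresponds to the nonempty subset $\{s_1, \ldots, s_{d-1}\} \subseteq \{1, \ldots, w\}$, of which there are at most $2^{w}-1$; in particular the family of shifting types $\s$ with $s_{d-1} \leq w$ is finite.

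Now for each such shifting type the preceding proposition supplies a constant $L = L(\s)$, depending only on $\s$, with the property that every numerical semigroup of shifting type $\s$ and smallest generator $n_1 > L(\s)$ is homogeneous with Cohen-Macaulay tangent cone. I would then set
\[
W := 1 + \max\bigl\{\, L(\s) \ \mid \ \s \text{ a shifting type with } s_{d-1} \leq w \,\bigr\},
\]
a finite maximum by the previous step. Finally, given any numerical semigroup $S$ with $\textrm{wd}(S) \leq w$ and multiplicity $e = n_1 \geq W$, its shifting type $\s$ satisfies $s_{d-1} \leq w$, so $L(\s)$ is one of the terms in the maximum and $n_1 \geq W > L(\s)$; the preceding proposition then yields that $S$ is homogeneous and $G(S)$ is Cohen-Macaulay. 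The only genuine point to verify is the boundedness of $d$ in terms of $w$, which is what renders the relevant collection of shifting types finite and legitimizes taking the maximum over it; everything else is a direct appeal to the preceding proposition.
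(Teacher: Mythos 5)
Your proof is correct and is essentially the paper's own argument: the paper justifies the proposition precisely by noting that bounding the width leaves only finitely many possible shifting types $\s$, and then taking $W$ beyond the maximum of the finitely many constants $L(\s)$ from the preceding proposition. Your write-up merely makes explicit the finiteness count (via $d-1 \leq s_{d-1} \leq w$ and subsets of $\{1,\ldots,w\}$) that the paper dismisses as clear.
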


\begin{ex}
	Let $a<b$ be positive integers. Then $S_k=\<k, k+a, k+b\>$ is a numerical semigroup with shifting type $s_1=b-a, s_2=b$, for any $k>0$ and $L=b(b-a)(\frac{gc+b}{b-a}+3)-b-1=b(4b-3a)-1+b(gc-1)$.
\end{ex}

\begin{rem}
For numerical semigroups $S_k=\<k,k+a,k+b\>$ of embedding dimension three, the given bound by Vu \cite{V}, is improved by Stamate in \cite[Theorem 3.5]{St} showing that the Betti numbers of $S_k$ are periodic in $k$, for  $k>k_{a,b}=\max\{b(\frac{b-a}{g}-1),b\frac{a}{g}\}$. Moreover, $S_k$ is of homogeneous type for $k>k_{a,b}$. As the above example shows, in embedding dimension three, $k_{a,b}$ is a better bound.
\end{rem}

\bigskip

\end{document}